\documentclass{oblivoir}
\usepackage{amssymb, amsmath, amsthm}
\usepackage[inline]{enumitem}
\usepackage{titlesec}
\usepackage{geometry}
\usepackage{indentfirst}
\usepackage{xparse}
\usepackage{thmtools}
\usepackage[nameinlink]{cleveref}
\usepackage{tikz-cd}
\usepackage{stmaryrd}
\usepackage[style=alphabetic, backend=bibtex]{biblatex}
\usepackage{csquotes}
\usepackage{url}
\usepackage{hyperref}
\usepackage{graphicx}

\addbibresource{ozc3.bib}

\setlist{  
  wide,
  itemindent=0.2cm
}
\everymath{\displaystyle}

\newcommand{\ZZ}{\mathbb{Z}}
\newcommand{\NN}{\mathbb{N}}
\newcommand{\QQ}{\mathbb{Q}}
\newcommand{\Qbar}{\overline{\mathbb{Q}}}
\newcommand{\CC}{\mathbb{C}}
\newcommand{\AAA}{\mathbb{A}}
\newcommand{\PP}{\mathbb{P}}

\newcommand{\JJ}{\mathbb{J}}
\newcommand{\kk}{\mathbf{k}}
\newcommand{\rr}{\mathbf{r}}
\newcommand{\xx}{\mathbf{x}}

\newcommand{\calK}{\mathcal{K}}
\newcommand{\GL}{\mathrm{GL}}
\newcommand{\SL}{\mathrm{SL}}
\newcommand{\dom}{\mathrm{dom}}

\newcommand{\frakI}{\mathfrak{I}}

\NewDocumentCommand{\bideg}{m}{\overline{\mathrm{deg}}\,#1}

\declaretheorem[style=definition, numberwithin=section, name=]{justnumber}
\declaretheorem[qed=$\square$, sibling=justnumber, name=Theorem]{theo}
\declaretheorem[sibling=justnumber, name=Theorem]{theop}

\declaretheorem[sibling=justnumber, name=Lemma]{lemmap}
\declaretheorem[qed=$\square$, sibling=justnumber, name=Proposition]{prop}
\declaretheorem[sibling=justnumber, name=Proposition]{propp}
\declaretheorem[qed=$\square$, sibling=justnumber, name=Corollary]{coro}
\declaretheorem[sibling=justnumber, name=Corollary]{corop}

\declaretheorem[style=definition,qed=$\square$, sibling=justnumber]{definition}
\declaretheorem[style=definition,qed=$\square$, sibling=justnumber, name=Notation]{notation}
\declaretheorem[style=definition, qed=$\square$, sibling=justnumber, name=Remark]{rmk}

\declaretheorem[qed=$\square$, name=Theorem]{theom}

\DeclareMathOperator{\Spec}{Spec}
\DeclareMathOperator{\Supp}{Supp}

\DeclareMathOperator{\Aut}{Aut}
\DeclareMathOperator{\Aff}{Aff}

\DeclareMathOperator{\Bir}{Bir}
\DeclareMathOperator{\End}{End}

\DeclareMathOperator{\id}{id}

\title{Algorithm to Compute Orbit Zariski Closure in Affine Plane}
\author{Young Joon, Ley}
\date{}

\begin{document}

\renewcommand{\abstractname}{Abstract}

\maketitle

\begin{abstract}
The article demonstrates the procedure how to compute the Zariski closure of an orbit by an algebraic action of finitely generated group on the affine plane. First half of the algorithm is about deciding whether given finitely generated group is contained in an algebraic group. For the next half, we compute the totality of the invariant subvarieties for a single triangular automorphism. Then, the computation for the individual generators is applied to compute the orbit Zariski closure for a finitely generated group. 
\end{abstract}

\tableofcontents

\section{Introduction}
\label{Section:Introduction}

Let us relate the problem of computing the orbit Zariski closure with the classical orbit problem. When a group $G$ acts on a set $X$, the orbit problem is the problem to decide whether, given $x,y\in X $, there exists $g\in G $ such that $g.x=y $, and to compute such $g$ if it exists.

As a positive answer towards the orbit problem, there is the Grunewald-Segal Algorithm \cite[Algorithm A]{Grunewald-Segal}, which proves the decidability of orbit problem for arithmetic group when the action of the arithmetic group is induced from its ambient algebraic group. More precisely, suppose $X\subset \AAA^n_{\QQ}$ is an affine variety defined over $\QQ $, $G$ is an algebraic group defined over $\QQ $, and $\rho:G\rightarrow \Aut_{\QQ}(X) $ is a representation of $G$ in the algebraic automorphism group of $X$ over $\QQ $. The representation $\rho $ need not be an algebraic morphism, and in the cited paper, $X, G, \rho $ are all assumed to be ``explicitly given''. Suppose $\Gamma \subset G $ is an arithmetic subgroup which is also ``explicitly given''. Then, the Grunewald-Segal Algorithm proves that the orbit problem is decidable for the induced representation $\rho:\Gamma\rightarrow \Aut_{\QQ}(X) $. 

In comparison with the Grunewald-Segal Algorithm, if we drop the arithmeticity assumption from $\Gamma $, then we cannot guarantee the decidability of the orbit problem. The counterexample is provided by Mikhailova \cite{Mikhailova}, where the author proves that there exists a finitely generated subgroup $\Gamma \subset \SL_4(\ZZ) $ for which the the conjugacy problem is undecidable. Thus, there exists a representation $\rho:\Gamma \rightarrow \Aut_{\QQ}(\AAA^{16}) $ for which the orbit problem is undecidable. 

Restricting the general orbit problem in group theory to the algebraic dynamics, we can ask the following question:
\begin{quote}
\label{quote:alg-orbit-problem-hard}
Suppose a finitely generated group $\Gamma $ acts on an algebraic variety $X$, i.e. there exists a representation $\Gamma \rightarrow \Aut_{\mathrm{Sch}}(X) $. For given geometric points $x,y\in X $, decide whether there exists $g\in \Gamma$ such that $g.x=y $. 
\end{quote}
The Grunewald-Segal Algorithm \cite{Grunewald-Segal}, and the counterexample of Mikhailova \cite{Mikhailova} respectively provide the positive and negative answers to special instances of the problem.

Above problem is just as unapproachable as the general orbit problem in group theory is. It is tempting to make the problem more approachable by relieving the orbit condition. Thus, an ``orbit closure problem'' might be suggested:
\begin{quote}
\textbf{Orbit Closure Problem:} Suppose a finitely generated group $\Gamma$ acts algebraically on an algebraic variety $X$, i.e. there exists a representation $\Gamma \rightarrow \Aut_{\mathrm{Sch}}(X) $. For given geometric points $x,y \in X$, decide whether $y\in \overline{\Gamma.x}$. 
\end{quote}
Thus, the computation of an orbit Zariksi closure is to be thought as an ``algebraically approximate'' answer towards the orbit problem.

In the introduction of the paper \cite{Whang}, Whang asks whether it is always possible to compute the orbit Zariski closure of a point under a set of of endormophisms of the algebraic variety. In the present article, we propose a positive answer towards the simplest nontrivial case when a finitely generated group acts algebraically on $\AAA^2_{\Qbar} $.

The main result of the paper is the following theorem (\Cref{theo:orbit-zariski-closure}):

\begin{theom}
\label{main-theo:ozc-algo}
Suppose $\Gamma \rightarrow \Aut(\AAA^2_{\Qbar}) $ is a representation of a finitely generated group $\Gamma $. Let $x\in \AAA^2_{\Qbar} $ be a geometric point. Then, there exist an algorithm to compute the Zariski closure of the orbit $\Gamma.x $. Thus, the orbit closure problem is decidable in $\AAA^2 $. 
\end{theom}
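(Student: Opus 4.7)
The plan is to exploit the Jung--van der Kulk structure of $\Aut(\AAA^2_{\Qbar})$ as the amalgamated product of the affine group and the Jonquières (triangular) group. Put each generator $g_1,\ldots,g_n$ of $\Gamma$ into its reduced alternating normal form, giving an effective handle on the dynamics of each element via its length and factors.

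The first algorithmic stage decides whether $\Gamma$ is contained, after a common conjugation, in an algebraic subgroup of $\Aut(\AAA^2_{\Qbar})$. Here one uses the classification (Kambayashi, Furter--Kraft) saying that every connected algebraic subgroup is conjugate into either the affine group or the Jonquières group. The decision procedure is a Bass--Serre-type analysis on the tree associated to the amalgamated product: a reduced word that is ``truly alternating'' of length $\ge 2$ (a Hénon-like composition) cannot sit in a single vertex stabiliser, so one looks for a common conjugator that trivialises all such alternations among the generators; this is a finite computation on the normal forms. In the positive case, one then computes the Zariski closure $H := \overline{\Gamma}$ inside the ambient algebraic group by the usual linear-algebraic procedure, and recovers $\overline{\Gamma.x} = \overline{H.x}$ by Gröbner-basis elimination from the explicit orbit map of an algebraic group acting on $\AAA^2_{\Qbar}$.

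The second stage handles the genuinely dynamical content when $\Gamma$, possibly after conjugation, lives in the Jonquières group, and eventually the full $\Gamma$. For a single triangular $g:(x,y)\mapsto(\alpha x+\beta,\,\gamma y+p(x))$, I would enumerate explicitly the totality of $g$-invariant subvarieties of $\AAA^2_{\Qbar}$: invariant vertical fibres come from the one-dimensional dynamics on the $x$-line, invariant horizontal sections are computed by solving an affine cocycle equation in $\Qbar[x]$ for fixed points of the affine operator $s\mapsto \gamma\cdot s(\alpha x + \beta)^{-1} + p$, and the remaining invariant curves form a finite set cut out by vanishing of explicit resultants. From this list one obtains $\overline{\langle g \rangle . x}$ as the intersection of all $g$-invariant closed subsets containing $x$. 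For the full $\Gamma=\langle g_1,\ldots,g_n\rangle$, start from $\{x\}$ and iteratively replace the current closed set $Z$ by $\bigcup_i \overline{\langle g_i\rangle . Z}$, Zariski-closed. Noetherianity of $\AAA^2_{\Qbar}$ forces stabilisation, and effective termination is guaranteed because at each step the ascending chain either grows in dimension (bounded by $2$) or grows in its finite set of invariant curves (bounded via degree estimates coming from the single-generator analysis).

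The main obstacle is the first stage: deciding containment of a finitely generated subgroup in an algebraic subgroup of an \emph{infinite-dimensional} ind-group like $\Aut(\AAA^2_{\Qbar})$ has no black-box algorithm, and the whole proposal rests on turning the Jung--van der Kulk / Bass--Serre tree structure into an explicit algorithm for detecting Hénon-like words and, in their absence, producing the common conjugator. A secondary obstacle is the bookkeeping needed to certify termination of the iterative closure procedure with an effective bound on the number of steps; here I expect that degree control in the triangular case, together with the Hénon dichotomy treating the non-algebraic case separately, will suffice to close the argument.
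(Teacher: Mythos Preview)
Your overall architecture---Jung--van der Kulk, a Bass--Serre analysis on the amalgamated product to decide common conjugation into a factor, then a single-generator invariant-curve analysis---is the paper's plan as well, and the paper carries out the first stage exactly as you sketch (\Cref{main-criterion}, \Cref{theo:conj-criterion-aut}). The genuine gap is the unbounded case. You defer it to ``the H\'enon dichotomy treating the non-algebraic case separately,'' but this is precisely where two external theorems are indispensable and you name neither. First, one needs Blanc--Stampfli (\Cref{prop:blanc-stampfli}): any subgroup of $\Aut(\AAA^2)$ preserving a curve is of bounded degree, so an unbounded $\Gamma$ has no invariant curve and $\overline{\Gamma.x}$ is forced to be $0$-dimensional or all of $\AAA^2$. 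Without this, your iterative saturation $Z\mapsto\bigcup_i\overline{\langle g_i\rangle.Z}$ has no degree control once the generators are not simultaneously triangularisable---and note that each $g_i$ can be individually bounded while $\Gamma$ is not, so ``degree estimates from the single-generator analysis'' do not propagate. Second, one needs an effective test for periodicity of a point under a finite set of endomorphisms over $\Qbar$; the paper imports this from Whang (\Cref{prop:whang}), and it is what makes the $0$-versus-$2$ dimensional decision algorithmic (and, in your scheme, what would let you compute $\overline{\langle g_i\rangle.p}$ for a H\'enon-type $g_i$). Your proposal supplies neither ingredient; the paper's \Cref{coro:unbdd-ozc} is exactly the combination of the two.

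For the bounded case you suggest something the paper does not do: after conjugating $\Gamma$ into $\JJ$ it lands in some algebraic $\JJ_n\hookrightarrow\GL_{n+2}$, so compute $H=\overline{\Gamma}$ and recover $\overline{\Gamma.x}=\overline{H.x}$ by elimination. This is legitimate, provided you invoke an actual algorithm for the Zariski closure of a finitely generated matrix group over $\Qbar$ (e.g.\ Derksen--Jeandel--Koiran); calling it ``the usual linear-algebraic procedure'' understates a nontrivial result that must be cited. If granted, it would replace the paper's explicit classification of invariant subvariety lattices (\Cref{prop:computations-b-is-one}--\Cref{prop:computations-no-unity}) and the lengthy case analysis in the proof of \Cref{theo:orbit-zariski-closure}. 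Your alternative iterative saturation, by contrast, is the paper's approach in disguise: computing $\overline{\langle g_i\rangle.C}$ for a curve $C$ already requires the full list of $g_i$-periodic curves, which is exactly what \Cref{section:invar-curves} establishes, so those ``degree estimates'' have to be proved rather than assumed. (Minor: your triangular form has the variables swapped---in $\JJ$ the polynomial part depends on $y$, not $x$---and the ``affine cocycle'' formula as written is not well-formed.)
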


\par We gather relevant results in arithmetic and algebraic dynamics together to elaborate the algorithm. In \Cref{section:blanc-stampfli}, we compute the orbit Zariski closure when $\Gamma \subset \Aut(\AAA^2_{\Qbar}) $ is unbounded in the degree. The key ingredients are the Blanc-Stampfli's Theorem \cite[Theorem 1]{blanc-stampfli}, and the uniform orbit bound theorem of Whang \cite[Theorem 1.2]{Whang}. The Blanc-Stampfli's Theorem implies that a finitely generated subgroup $\Gamma \subset \Aut(\AAA^2) $ with unbounded degree cannot preserve a curve. The Whang's Uniform Orbit Bound Theorem gives effective bound for the size of a finite orbit by a finitely generated group action on an algebraic variety. 

Thus, combining these two theorems, we easily obtain the algorithm to compute the orbit Zariski closure for unbounded $\Gamma \subset \Aut(\AAA^2) $. It remains to decide whether $\Gamma $ is bounded degree or not, then compute the orbit Zariski closure for the cases when $\Gamma $ is bounded in degree.

\par The celebrated Jung-van der Kulk Theorem states that the group $\Aut(\AAA^2) $ is the amalgamated product of the following two subgroups \cite{Jung1942}, \cite{vdK1953}:
\begin{gather*}
  \Aff= \{(a_{11}x+a_{12}y+a_{13}, a_{21}x+ a_{22}y+a_{23} )\in \Aut(\AAA^2): a_{11}a_{22}-a_{12}a_{21}\neq 0 \}, \\
  \JJ= \{(ax+P(y), by+c)\in \Aut(\AAA^2): a\neq 0, \, b\neq 0,\, c\in \kk, \,P(y)\in \kk[y]  \} \, . 
\end{gather*}
Word length of an element in an amalgamated product group is well-defined. It turns out that group theoretic notion of length translates well to the notion of degree in the polynomial group $\Aut(\AAA^2)$. 
Serre's Theorem \cite[I.4.3. Theorem 8]{serre-trees} on the subgroups of bounded length in an amalgamated product group translates to the $\Aut(\AAA^2) $ via the Jung-van der Kulk Theorem. In the setting of the polynomial group, the Serre's Theorem tells that a finitely generated subgroup of $\Aut(\AAA^2) $ is of bounded degree if and only if it is conjugate into $\Aff $ or $\JJ $.

\par In \Cref{section:grp-theory}, we state the criterion to decide whether a finitely generated group in an amalgamated is of bounded length or not. The proof of the criterion is elementary and combinatoric. The criterion (\Cref{main-criterion}) is as follows:
\begin{theom}
\label{main-theo:criterion-amalgam}
  Suppose $G=A*_{A\cap B} B$ is an amalgamated product and $H=\langle g_1,\cdots, g_n\rangle \leq G $ a finitely generated subgroup. Suppose $g_1 $ is the generator with maximal length, i.e. $l(g_1)=\max_{j=1,\cdots, n} \{l(g_j) \} $. Then:
\begin{enumerate}[label=(\roman*)]
  \item $H$ is conjugate to a subgroup of one of the factors if and only if the generators $g_1,\cdots, g_n$ and the products $g_{1} g_j$ for all $j\neq 1 $ are bounded elements.
  \item Suppose that $H$ is conjugate into one of the factors and that $l(g_{1})>1 $. Then, $g_{1} $ has odd length. Let $g_{1}=k_1\cdots k_{2s+1} $ be a reduced expression of $g_{1} $ where $s \geq 1 $. Then, $(k_1\cdots k_s)^{-1} H (k_1\cdots k_s) $ is contained in one of the factors.
\end{enumerate}
\end{theom}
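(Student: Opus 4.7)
My plan is to work in the Bass--Serre tree $T$ of $G = A *_C B$ (with $C := A \cap B$), whose vertex set is $G/A \sqcup G/B$, whose edge set is $G/C$, and with basepoint $v_0 := A$. In this picture $l(g)$ coincides with reduced word length, ``bounded'' is equivalent to ``elliptic'' (fixes a vertex of $T$) and to ``conjugate into $A$ or $B$'', and a subgroup is conjugate into a factor iff it fixes a vertex of $T$. The ``only if'' direction of (i) is then immediate, and the cases $l(g_1) \leq 1$ of ``if'' are elementary: if $l(g_1) = 0$ all generators lie in $C \subset A$, and if $l(g_1) = 1$ with, say, $g_1 \in A \setminus C$, then any $g_j \in B \setminus C$ would make $g_1 g_j$ a reduced word of length $2$ whose first and last letters lie in different factors, hence cyclically reduced and hyperbolic, contradicting boundedness.

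For the main case $l(g_1) = 2s+1 \geq 3$ I would use two geometric ingredients. First, a direct cyclic-reduction analysis of any elliptic element $g = k_1 \cdots k_{2t+1}$ --- requiring successively that $k_{2t+1}k_1 \in C$, then $k_{2t}(k_{2t+1}k_1)k_2 \in C$, and so on --- shows that $g$ fixes the vertex $(k_1 \cdots k_t) X$ for a specific $X \in \{A, B\}$ determined by the parity of $t$ and the factor of $k_1$, and that this vertex is both the orthogonal projection of $v_0$ onto $F_g := \mathrm{Fix}(g)$ and the midpoint of the geodesic $v_0 \to g v_0$. Second, I invoke the classical tree fact that two elliptic isometries whose product is elliptic share a fixed vertex (proved by observing that otherwise the product would translate the geodesic between their disjoint fixed subtrees). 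Applied to $g_1$ and $g_j$, the hypothesis that $g_1 g_j$ is bounded yields a vertex $u_j \in F_1 \cap F_j$ for every $j$, where $F_i := \mathrm{Fix}(g_i)$.

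The crux is to upgrade this to $v_s \in F_j$ for every $j$, where $v_s := (k_1 \cdots k_s) X$ is the midpoint associated to $g_1$. Let $p_j := \mathrm{proj}_{F_j}(v_0)$; both $v_s$ and $p_j$ lie on the geodesic $v_0 \to u_j$ (since that geodesic passes through the projection onto any subtree containing $u_j$), hence are linearly ordered by distance from $v_0$. If $d(v_0, p_j) \leq d(v_0, v_s)$, the segment $p_j \to u_j$ lies in the subtree $F_j$ and contains $v_s$, forcing $v_s \in F_j$. The reverse inequality, by the length-distance formula on $T$ together with $l(g_j) \leq l(g_1)$, can occur only when $g_1$ and $g_j$ start their reduced expressions in different factors and both have length $2s+1$; but then $g_1 g_j$ is a reduced word of even length $4s+2$ whose first and last letters lie in different factors, hence cyclically reduced and hyperbolic --- contradicting the boundedness of $g_1 g_j$. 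Thus $v_s \in \bigcap_j F_j = \mathrm{Fix}(H)$, and conjugation by $(k_1 \cdots k_s)$ carries $H$ into $\mathrm{Stab}(v_s) \in \{A, B\}$.

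This establishes (i). Part (ii) follows from the same argument: the hypothesis ``$H$ conjugate into a factor'' forces every $g_i$ and every $g_1 g_j$ (being elements of $H$) to be bounded, the oddness of $l(g_1) > 1$ falls out of the observation that a cyclically reduced word of even length is hyperbolic, and the conjugator produced is exactly $(k_1 \cdots k_s)$. I expect the main obstacle to be the parity bookkeeping needed to pin down the factor $X \in \{A, B\}$ of the midpoint vertex and to verify the length-distance formula on $T$, both of which depend on which factors the first and last letters of the relevant reduced words lie in.
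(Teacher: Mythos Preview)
Your argument is correct and takes a genuinely different route from the paper. The paper's proof is combinatorial and inductive: it isolates a key lemma (\Cref{lemma:main-criterion}) showing that if $g_1$, $g_j$, and $g_1 g_j$ are all bounded with $l(g_1)>1$, then conjugation by the first letter $k_1^{-1}$ of $g_1$ drops the length of every $g_j$ by~$2$ (or keeps it $\leq 1$); one then peels letters off one at a time and inducts on $l(g_1)$. Your proof, by contrast, is geometric: you work directly in the Bass--Serre tree, identify the midpoint vertex $v_s=(k_1\cdots k_s)X$ of the geodesic $[v_0,g_1v_0]$ as the projection of $v_0$ onto $\mathrm{Fix}(g_1)$, invoke the standard fact that two elliptics with elliptic product have intersecting fixed subtrees, and then use a projection--distance comparison to force $v_s\in\mathrm{Fix}(g_j)$ for every~$j$ in one stroke. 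No induction is needed.

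What each approach buys: the paper's argument is entirely self-contained at the level of reduced words and requires no tree machinery, which fits its goal of an explicit algorithm; your argument is shorter and more conceptual once the Bass--Serre dictionary is in place, and it pinpoints the conjugator and even the target factor~$X$ simultaneously. The ``parity bookkeeping'' you flag is real but routine: with $v_0=A$ one finds $d(v_0,v_s)=s$ when $k_1\in A$ and $s+1$ when $k_1\in B$, and likewise for $p_j$, so the only way to get $d(v_0,p_j)>d(v_0,v_s)$ under $l(g_j)\leq l(g_1)$ is $l(g_j)=2s+1$ with $k_1\in A$ and the first letter of $g_j$ in $B$---exactly the case you dispose of by observing $g_1g_j$ is then cyclically reduced of length $4s+2$.
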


\par \Cref{section:conjugacy-aut-of-affine} is devoted to translating the results on general amalgamated product to the setting of polynomial group $\Aut(\AAA^2) $. Thus, exploiting the amalgamated product structure of $\Aut(\AAA^2) $, we obtain the following subalgorithm of the main algorithm, which is the content of \Cref{theo:conj-criterion-aut}:
\begin{theom}
\label{main-theo:main-criterion}
Let $H\leq \Aut(\AAA^2) $ be a finitely generated subgroup. Then, there exists the algorithm to decide whether $H $ is conjugate into $\Aff $ or $\JJ $ and compute the conjugator if so.
\end{theom}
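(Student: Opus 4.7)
The plan is to specialize the combinatorial criterion \Cref{main-theo:criterion-amalgam} to the amalgamated product $\Aut(\AAA^2)=\Aff *_{\Aff\cap\JJ}\JJ$ supplied by Jung--van der Kulk. To make that criterion effective I will need three subroutines: (a) a normal-form algorithm that, given $f=(p(x,y),q(x,y))\in\Aut(\AAA^2)$, produces a reduced alternating word $f=k_1 k_2\cdots k_l$ in $\Aff$ and $\JJ$, and in particular reports the length $l(f)$; (b) a decision routine that, given a single $f\in\Aut(\AAA^2)$, decides whether $f$ is bounded in the amalgamated product, i.e.\ conjugate into one of the factors; and (c) the explicit conjugator construction in part (ii) of \Cref{main-theo:criterion-amalgam}, which is already a closed-form expression in terms of the reduced word of $g_1$.

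For (a), I would implement the classical Jung--van der Kulk reduction: inductively strip off an affine or elementary triangular factor whose leading term cancels the leading term of $f$, with a strict drop in total degree at each step, until the residue lies in $\Aff\cap\JJ$. This is pure polynomial arithmetic over $\Qbar$ and terminates because the total degree is a positive integer that strictly decreases. Once explicit systems of coset representatives for $\Aff/(\Aff\cap\JJ)$ and $\JJ/(\Aff\cap\JJ)$ are fixed, the output is the unique reduced word measuring the amalgamated-product length of $f$. For (b), I use that a single element in any amalgamated product is bounded if and only if its cyclically reduced length is at most $1$: starting from the reduced word produced in (a), I repeatedly collapse matching end syllables that lie in the same factor (conjugating $f$ by the removed element) and then test whether the resulting length is $\leq 1$.

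With (a) and (b) available, the main algorithm is immediate. Compute the reduced forms of $g_1,\dots,g_n$, relabel so that $l(g_1)=\max_j l(g_j)$, and apply (b) to each $g_j$ and to each product $g_1 g_j$ with $j\neq 1$. By part (i) of \Cref{main-theo:criterion-amalgam}, these boundedness tests decide whether $H$ is conjugate into $\Aff$ or into $\JJ$, and in the negative case the algorithm reports failure. In the positive case, if $l(g_1)\leq 1$ then $H$ already lies in one of the factors and the identity conjugator works; otherwise part (ii) forces $l(g_1)=2s+1$ with $s\geq 1$, and conjugation by the already-computed prefix $k_1\cdots k_s$ of the reduced form of $g_1$ places $H$ inside a factor. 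The main obstacle is a bookkeeping issue in step (a): one must fix canonical coset representatives for the edge group $\Aff\cap\JJ$ inside each factor and verify that the output of the polynomial reduction is genuinely the unique amalgamated-product reduced word, so that the length function used in \Cref{main-theo:criterion-amalgam} coincides with what we measure. Once this dictionary between polynomial data and Bass--Serre tree data is pinned down, the rest of the argument is mechanical.
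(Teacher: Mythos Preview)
Your proposal is correct and follows essentially the same route as the paper's proof of \Cref{theo:conj-criterion-aut}: compute reduced words via the Jung--van der Kulk normal-form algorithm (the paper's \Cref{prop:factorization-algo}, which also supplies the explicit coset representatives you flag as a bookkeeping issue), test boundedness of the generators and of the products $g_1g_j$, and invoke \Cref{main-criterion} both for the decision and for the conjugator $(k_1\cdots k_s)^{-1}$. The only cosmetic difference is your single-element boundedness test (b): you cyclically reduce and check length $\leq 1$, whereas the paper uses the one-line degree criterion $\deg\phi^2\leq\deg\phi$ of \Cref{prop:bdd-len-criterion}---but that criterion is itself proved by the same cyclic-reduction argument, so the two tests are interchangeable.
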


\par Once we have established the conjugacy criterion for finitely generated subgroups in $\Aut(\AAA^2) $, we can focus on the computation of the orbit Zariski closure for the finitely generated, bounded subgroups in $\Aut(\AAA^2) $. \Cref{section:invar-curves} is devoted to this computation. 

In \Cref{section:invar-curves}, we compute the totality of the invariant subvarieties for given $\phi\in \JJ $. The computation of possible invariant irreducible curves in $\AAA^2$ already appeared in \cite[Section 4]{blanc-stampfli}. In the present article, we need the totality of the invariant subvarieties whch may not be irreducible. The information of the totality of the invariant subvarieties is contained in the invariant subvariety lattice $\frakI_{\phi} $ (\Cref{def:invar-subvar-lattice}), then encoded by the associated equivariant map of $\phi $ (\Cref{def:invar-subvar-lattice-classification}). 

The approach we take to develop the algorithm is that if we know the totality of the invariant subvarieties for each $\phi_1,\cdots, \phi_n \in \Aut(X) $, then we can compute the invariant subvarieties of the finitely generated subgroup $\langle \phi_1,\cdots, \phi_n \rangle $ by theorems relating the invariant subvariety lattices of each $\phi_i $. The lemma of the spirit is the \Cref{lemma:equivar-affine} and \Cref{lemma:equivar-projective}, which are very simple in the case of $\AAA^2 $. The lemmas serve to complete the main algorithm (\Cref{theo:orbit-zariski-closure}) in \Cref{section:computation-orbit-closure}.

\par The algorithm can be improved much efficiently in the case when the gerenators are algebraic elements, i.e. an element which in an image of homomorphism $G\rightarrow \Aut(\AAA^2) $ where $G$ is an algebraic group. The procedure reduces 2-dimensional dynamics to 1-dimensional dynamics and the 1-dimensional dynamics can be carried out much more efficiently. The optimization is not considered in the present article, and we focus on the existence of the algorithm.

\subsection*{Notations/Conventions}
\label{Intro:Conventions}
Throughout the article, $\kk $ denotes an arbitrary field. An affine space $\AAA^n $ is over the field $\kk$ unless otherwise mentioned.

\par By a \textit{variety over $\kk $}, we mean a reduced, separated, scheme finite type over $\kk$. It doesn't have to be irreducible, connected or pure dimensional. \textit{Curve} means 1-dimensional variety.

\par An element $\phi\in \Aut(\AAA^n) $ is denoted $\phi=(f_1,\cdots, f_n) $ where $f_1, \cdots, f_n \in \kk[x_1,\cdots, x_n] $ to mean that $f_1,\cdots, f_n$ are the components of $\phi $. There are two types of degrees that is used in this article. One is the usual degree $\deg{\phi}:= \max_{1\leq i \leq n}\{\deg{f_i}\} $ where $\deg{f}=\max_{a_\rr\neq 0} \{\sum_{1 \leq i \leq n} r_i \} $ if $f=\sum_{\rr} a_{\rr} \xx^{\rr}=\sum_{\rr} a_{\rr} x_1^{r_1}\cdots x_n^{r_n} $. The other is the \textit{bidegree} $\bideg{\phi}:=(\deg{f}, \deg{g}) $. 

\par A subgroup $H \subset \Aut(\AAA^2) $ is said to \textit{be of bounded degree}, if the set $\{\deg{\phi} \,\vert\, \phi\in H \} $ is bounded. It turns out that the boundedness of the degree is equivalent to the boundedness in the amalgamated product structure of $\Aut(\AAA^2) $ \Cref{prop:bdd-len-criterion}. Hence, we will often simply say that the subgroup is bounded or unbounded.

\par The subgroups $\Aff$ and $\JJ \subset \Aut(\AAA^2)$ denotes the affine linear group and de Jonqui\`{e}res group respectively:
\begin{gather*}
\Aff= \{(a_{11}x+a_{12}y+a_{13}, a_{21}x+ a_{22}y+a_{23} )\in \Aut(\AAA^2): a_{11}a_{22}-a_{12}a_{21}\neq 0 \}, \\
\JJ= \{(ax+P(y), by+c)\in \Aut(\AAA^2): a\neq 0, \, b\neq 0,\, c\in \kk, \,P(y)\in \kk[y]  \} \, . 
\end{gather*}
The subgroup $\JJ_n \leq \JJ $ is the subgroup consisting of automorphisms of degree $\leq n $, i.e. 
\[
  \JJ_n := \{(ax+ P(y), by+c)\in \JJ : \deg{P(y)}\leq n \} \, . 
\]
In particular, it is an algebraic group. 

\par For an $R$-scheme $X$, the scheme automorphism group of $X$ is denoted $\Aut_R(X)$, and the subscript is omitted if the base scheme is clear. For a curve $C$, $\Aut(\AAA^2, C) \subset \Aut(\AAA^2)$ denotes the subgroup preserving curve $C$. If $X$ is a surface and $B\subset X $ a curve, then $\Bir(X,B) $ denotes the group of birational self-maps of $X$ preserving $B$. If $C\subset X $ is another curve, then $\Bir((X,B), C) $ denotes the subgroup of $\Bir(X,B) $ consisting of the birational maps that preserve $C$.

\section{When $\Gamma \subset \Aut(\AAA^2)$ is Unbounded}
\label{section:blanc-stampfli}

\par In this section, we describe how to compute the orbit Zariski closure given that we know the finitely generated group $\Gamma \subset \Aut(\AAA^2) $ is unbounded in degree. The algorithm is a simple consequence of the Blanc-Stampfli's Theorem (\Cref{coro:blanc-stampfli}) and the Whang's Uniform Orbit Bound Theorem (\Cref{prop:whang}).

Following is the Blanc-Stampfli's Theorem:

\begin{prop}[\protect{\cite[Theorem 1]{blanc-stampfli}}]
\label{prop:blanc-stampfli}
Suppose $C\subset \AAA^2 $ is a curve. Then $\Aut(\AAA^2, C) $ is conjugate to a subgroup of $\JJ $ or $\Aff $. 
\end{prop}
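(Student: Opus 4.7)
The plan rests on the Jung--van der Kulk decomposition $\Aut(\AAA^2) = \Aff *_{\Aff \cap \JJ} \JJ$ together with Serre's theorem on bounded-length subgroups of an amalgamated product (\cite[I.4.3 Theorem 8]{serre-trees}): a subgroup is conjugate into one of the factors precisely when its length is bounded, which in the polynomial setting translates, by \Cref{prop:bdd-len-criterion}, to having bounded polynomial degree. Hence the proposition reduces to showing that $G := \Aut(\AAA^2, C)$ is of bounded degree.

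To bound the degrees of the elements $\phi \in G$, I would pass to the compactification $\AAA^2 \hookrightarrow \PP^2$. Let $\overline{C}$ be the projective closure of $C$, put $d := \deg \overline{C}$, and denote by $L_\infty$ the line at infinity. Each $\phi \in \Aut(\AAA^2)$ extends to a birational self-map $\overline{\phi}$ of $\PP^2$ whose indeterminacy is confined to $L_\infty$. The condition $\phi(C) = C$ forces the pullback identity $\overline{\phi}^{*}(\overline{C}) = \overline{C} + m\, L_\infty$ in $\Pic(\PP^2) \cong \ZZ$ for some $m \geq 0$; comparing degrees yields $m = (\deg \phi - 1)\, d$. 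Thus, when $\deg \phi$ is large, the preimage of $\overline{C}$ acquires very high-order contact with $L_\infty$, and the indeterminacy points of $\overline{\phi}^{\pm 1}$ are forced into the finite set $\overline{C} \cap L_\infty$, whose cardinality is at most $d$.

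One then converts this contact condition into a bound on the amalgamated word length of $\phi$. A reduced expression in $\Aff *_{\Aff \cap \JJ} \JJ$ corresponds to a sequence of elementary de Jonqui\`eres-type links in $\PP^2$, each anchored at a base point of $\overline{\phi}$ on $L_\infty$. Preservation of $\overline{C}$ forces the anchor of every such link to lie in the finite set $\overline{C} \cap L_\infty$, and tracking these anchors through a Sarkisov-style decomposition yields a finite combinatorial datum whose size is controlled by $d$, thereby controlling the length, and hence the degree, of $\phi$.

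The main obstacle is this tracking step: one must verify that each elementary link appearing in the decomposition of $\overline{\phi}$ is genuinely anchored at a point of $\overline{C} \cap L_\infty$, and that the combinatorics of admissible successor anchors is bounded rather than unboundedly branching. Once a uniform degree bound is secured, the conclusion that $G$ is conjugate into $\Aff$ or $\JJ$ is immediate from Serre's theorem (compare \Cref{main-theo:criterion-amalgam}).
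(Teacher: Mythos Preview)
Your outline shares the broad strategy that the paper attributes to Blanc--Stampfli: compactify $\AAA^2\hookrightarrow\PP^2$, view $\Aut(\AAA^2)$ as $\Bir(\PP^2,L_\infty)$, and analyse the induced birational maps via Sarkisov-type links. So the framework is right. The gap is exactly where you flag it, and it is a genuine one rather than a technicality.

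First, the assertion that every link in the decomposition of $\overline{\phi}$ is anchored in the fixed finite set $\overline{C}\cap L_\infty$ is not well-posed as stated: after the first link the ambient surface, the boundary, and the strict transform of $C$ all change, so ``$\overline{C}\cap L_\infty$'' is not a static set through the factorisation. You would need to track how the proper transform of $C$ meets the successive boundaries, and there is no a priori reason this intersection stays of uniformly bounded size or complexity.

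Second, even if every anchor were drawn from a fixed finite set, finiteness of the anchor set does not by itself bound the number of links: nothing you have written prevents revisiting the same anchor arbitrarily often. Your pullback computation $\overline{\phi}^{*}\overline{C}=\overline{C}+d(\deg\phi-1)L_\infty$ is correct, but it only says the contact with $L_\infty$ grows with $\deg\phi$; it does not convert into an upper bound.

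The argument the paper sketches (following \cite{blanc-stampfli}) does not try to bound the degree first and then invoke Serre. Rather, it uses that any $\overline{\phi}\in\Bir((\PP^2,L_\infty),C)$ must preserve the configuration of singularities of $\overline{C}$ on $L_\infty$, and then exploits the explicit description of links between $\PP^2$ and the Hirzebruch surfaces to \emph{determine} the group $\Aut(\AAA^2,C)$ outright, from which the conjugacy into $\JJ$ or $\Aff$ is read off directly. The bounded-degree statement is then a consequence, not the route in.
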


\begin{coro}
\label{coro:blanc-stampfli}
Suppose a finitely generated group $\Gamma$ acts on $\AAA^2 $ and preserves a curve. Then $\Gamma$ is of bounded degree in $\Aut(\AAA^2) $.
\end{coro}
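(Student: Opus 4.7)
The plan is a one-line reduction: since $\Gamma$ preserves a curve, the Blanc--Stampfli Theorem (\Cref{prop:blanc-stampfli}) places a conjugate of $\Gamma$ inside $\Aff$ or $\JJ$, and both of these subgroups have the property that all of their finitely generated subgroups are of bounded degree. Conjugating back changes the degree by at most a fixed multiplicative constant, so the original $\Gamma$ inherits bounded degree.

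In more detail, the hypothesis that $\Gamma$ preserves a curve $C$ gives $\Gamma \subset \Aut(\AAA^2, C)$, so \Cref{prop:blanc-stampfli} supplies some $g \in \Aut(\AAA^2)$ with $\Gamma' := g^{-1}\Gamma g \subset \Aff$ or $\Gamma' \subset \JJ$. The $\Aff$ case is immediate because every affine linear map has degree $1$. For the $\JJ$ case, write the finitely many generators as $\phi_i = (a_i x + P_i(y),\, b_i y + c_i)$ and put $N := \max_i \deg P_i$. A short direct check of the composition and inversion formulas in $\JJ$ shows that $\JJ_N \leq \JJ$ is a subgroup containing every $\phi_i$, hence $\Gamma' \subset \JJ_N$; in particular $\Gamma'$ is of bounded degree.

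To transfer the bound back to $\Gamma$, I would invoke submultiplicativity of degree under composition of polynomial maps, $\deg(\psi \phi' \psi^{-1}) \leq \deg(\psi)\,\deg(\phi')\,\deg(\psi^{-1})$, with $\psi = g$. Since $g$ is fixed and $\deg(\phi') \leq \max(1, N)$ uniformly for $\phi' \in \Gamma'$, this yields a uniform upper bound on $\deg(\phi)$ for all $\phi \in \Gamma$. There is no real obstacle here: the deep content is entirely absorbed by \Cref{prop:blanc-stampfli}, and what remains is a routine verification inside the amalgamated product structure of $\Aut(\AAA^2)$.
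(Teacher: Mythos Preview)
Your argument is correct and is exactly the natural way to fill in the details. The paper itself gives no proof of this corollary; it is stated immediately after \Cref{prop:blanc-stampfli} and left as an obvious consequence, with the implicit reasoning being precisely what you wrote: conjugate into $\Aff$ or $\JJ$, use finite generation to land in some $\JJ_N$, and then transport the degree bound back via submultiplicativity of degree under composition (the same fact the paper later records inside the proof of \Cref{prop:bdd-len-criterion}).
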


\par The proof uses the techniques of the affine algebraic geometry; we embed the affine plane $\AAA^2 $ into $\PP^2 $ and complete the curve $C\subset \AAA^2$ in $\PP^2 $. The completed curve will be denoted by the same $C$. Any automorphism in $\Aut(\AAA^2) $ extends to a birational map $\Bir(\PP^2, B_{\PP^2}) $ where $B_{\PP^2} $ denotes the line at infinity $\{z=0\} $, i.e. $\Aut(\AAA^2)= \Bir(\PP^2, B_{\PP^2}) $. The birational self-maps extending the morphisms in $\Aut(\AAA^2, C) $ should preserve all singularities of $C$ lying on the line at infinity. Then, it is possible to precisely determine the group $\Aut(\AAA^2, C)= \Bir((\PP^2,B_{\PP^2}), C)$ by using the theory of links between the rational ruled surfaces and $\PP^2 $. The theory of links is an instance of Sarksisov Theory for rational surfaces, which factorizes the birational self-maps of $\PP^2 $ as composition of simple birational maps between the ruled surfaces and $\PP^2 $.

\par Recall that an \textit{ind-variety} $\mathcal{V} $ is defined as a sequence of closed immersions $\mathcal{V}_1 \subset \mathcal{V}_2 \subset \mathcal{V}_3 \subset \cdots $ of algebraic varieties, and ind-group is an ind-variety with a compatible group structure \cite[Definition 1.1.1]{Furter-Kraft}. In view of the invariant theory of ind-groups, \Cref{prop:blanc-stampfli} can be thought as a statement that invariant theory of ind-groups is trivial on $\AAA^2 $.

\begin{corop}
Let $\Gamma \subset \Aut(\AAA^2) $ be a finitely generated group. If Zariski closure $\overline{\Gamma} $ is an ind-group which is not an algebraic group, then the invariant ring by $\Gamma$ is $\kk[x,y]^{\Gamma}= \kk $. 
\end{corop}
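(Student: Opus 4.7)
The plan is to argue by contrapositive: show that if $\kk[x,y]^{\Gamma} \neq \kk $, then $\overline{\Gamma} $ is already an algebraic group, contradicting the hypothesis. To start, I would pick any non-constant invariant $f\in \kk[x,y]^{\Gamma} $ and any $c\in \kk $ for which the fiber $V(f-c)\subset \AAA^2 $ is a (nonempty) curve; such $c $ exists because $f:\AAA^2\to \AAA^1 $ is a non-constant morphism. Since $f $ is $\Gamma $-invariant, the curve $V(f-c) $ is preserved by every element of $\Gamma $, so \Cref{coro:blanc-stampfli} applies and yields that $\Gamma $ is of bounded degree in $\Aut(\AAA^2) $.

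Next I would promote bounded degree to containment in an honest algebraic subgroup of $\Aut(\AAA^2) $. The combination of Jung--van der Kulk with Serre's theorem on subgroups of bounded length in amalgamated products (as discussed in the introduction) says that a finitely generated subgroup of $\Aut(\AAA^2) $ of bounded degree is conjugate into $\Aff $ or $\JJ $; in the $\JJ $ case the degree bound additionally forces the image into $\JJ_n $ for some finite $n $. Both $\Aff $ and $\JJ_n $, and hence also their conjugates by a fixed element of $\Aut(\AAA^2) $, are algebraic subgroups of the ind-group $\Aut(\AAA^2) $. Thus $\Gamma $ sits inside some algebraic subgroup $H\subset \Aut(\AAA^2) $.

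Since $H $ is closed in the ind-topology on $\Aut(\AAA^2) $, the ind-theoretic closure $\overline{\Gamma} $ is contained in $H $ and coincides with the ordinary Zariski closure of $\Gamma $ inside the algebraic variety $H $. The Zariski closure of a subgroup of an algebraic group is again a closed subgroup, so $\overline{\Gamma} $ is an algebraic subgroup of $H $, contradicting the standing assumption that $\overline{\Gamma} $ is not an algebraic group.

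The substantive step is really the implication ``nontrivial invariant $\Rightarrow $ invariant curve $\Rightarrow $ bounded degree'', which relies on Blanc--Stampfli via \Cref{coro:blanc-stampfli}; everything after that is bookkeeping. The only point that deserves a quick remark in the full write-up is checking that the ind-topological closure inside $\Aut(\AAA^2) $ truly agrees with the Zariski closure taken inside the algebraic subgroup $H $ once $\Gamma\subset H $, and that this closure remains a subgroup. Both are standard properties of ind-topologies and of Zariski closures of subgroups, so no additional machinery beyond what has been introduced is needed.
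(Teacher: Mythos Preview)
Your proof is correct and follows essentially the same route as the paper: both produce a $\Gamma$-invariant curve from a nonconstant invariant polynomial and then invoke \Cref{coro:blanc-stampfli} to force $\Gamma$ to be of bounded degree. The only difference is cosmetic---the paper simply asserts the equivalence ``$\overline{\Gamma}$ not algebraic $\iff$ $\Gamma$ unbounded in degree'' and uses $V(f)$ rather than $V(f-c)$, whereas you spell out the implication ``bounded degree $\Rightarrow$ $\overline{\Gamma}$ algebraic'' via conjugation into $\Aff$ or $\JJ_n$ and the ind-topology bookkeeping.
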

\begin{proof}
That $\overline{\Gamma} $ is not an algebraic group is equivalent to saying that $\Gamma $ is not of bounded degree. If there exists some nontrivial invariant $f\in \kk[x,y] $, then the invariant polynomial gives an invariant curve $f=0$ in $\AAA^2 $ and it contradicts \Cref{coro:blanc-stampfli}. 
\end{proof}

\begin{prop}[\protect{\cite[Theorem 1.4]{Whang}}]
\label{prop:whang}
Let $S$ be a finite set of endomorphisms of an algebraic variety $V/\Qbar $. There is an algorithm to decide, given $x\in V(\Qbar) $, whether or not $x$ is $S$-periodic. 
\end{prop}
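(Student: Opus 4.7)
The plan is to derive decidability from the Uniform Orbit Bound Theorem of Whang \cite[Theorem 1.2]{Whang}, already invoked in the introduction: it supplies an effectively computable integer $N = N(V,S)$ such that any finite $S$-orbit in $V(\Qbar)$ has cardinality at most $N$. Once such an $N$ is in hand, deciding $S$-periodicity reduces to a bounded enumeration of the forward orbit.

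Concretely, I would carry out a breadth-first expansion of the $S$-orbit of $x$. Initialize $T_0 := \{x\}$ and iteratively set $T_{i+1} := T_i \cup \bigcup_{s \in S} s(T_i)$, stopping as soon as either $T_{i+1} = T_i$, in which case $T_i$ is a finite $S$-invariant set containing $x$, or $|T_{i+1}| > N$, in which case $x$ cannot lie in any finite $S$-invariant set by the uniform bound and is therefore not $S$-periodic. Termination is guaranteed in at most $N$ rounds, since the nondecreasing sequence $|T_i|$ either stabilizes at a value $\le N$ or strictly increases until it exceeds $N$. In the former case one then reads off $S$-periodicity by inspecting the defining condition directly on the finite set $T_i$ (for instance, for each $s \in S$ one verifies $s^k(x) = x$ for some $k \le |T_i|$).

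The main obstacle is computational rather than conceptual: the enumeration requires effective symbolic representations of geometric points in $V(\Qbar)$ that support both the evaluation of each $s \in S$ and equality testing between points. These can be implemented by representing points via zero-dimensional ideals over $\QQ$ presented by triangular sets or Gr\"obner bases, which is routine in computational algebraic geometry. All the substantive content of the proposition therefore sits in the uniform bound $N$ itself, which packages the arithmetic-dynamics input of Whang's main theorem; the present statement is essentially a bookkeeping corollary of that bound.
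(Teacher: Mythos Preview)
Your argument is correct: a uniform, effectively computable bound $N$ on the size of finite $S$-orbits immediately yields the decision procedure via bounded breadth-first enumeration, exactly as you describe. The only superfluous step is the final check that $s^k(x)=x$ for each $s$; once $T_{i+1}=T_i$ with $|T_i|\le N$, the set $T_i$ is already $S$-invariant and contains the full forward $S$-orbit of $x$, so $x$ is $S$-periodic without further verification.

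As for comparison with the paper: there is nothing to compare. The paper does not prove this proposition at all; it is stated as a direct citation of \cite[Theorem 1.4]{Whang} and used as a black box. Your proposal supplies the (standard) reduction of Theorem~1.4 to Theorem~1.2 in Whang's paper, which is presumably how Whang himself proceeds, but that derivation is not reproduced here.
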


\par Summing up the Blanc-Stampfli's Theorem (\Cref{coro:blanc-stampfli}) and Whang's Uniform Bound Theorem (\Cref{prop:whang}), we compute the orbit Zariski closure for unbounded $\Gamma $ by:

\begin{corop}
\label{coro:unbdd-ozc}
Suppose $\Gamma \subset \Aut(\AAA^2_{\Qbar}) $ is unbounded in degree. Then, given a point $p\in \AAA^2(\Qbar) $, there exists an algorithm to compute the orbit Zariski closure of $p$. 
\end{corop}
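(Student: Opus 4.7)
The plan is to show, using Corollary \ref{coro:blanc-stampfli}, that when $\Gamma$ is unbounded the orbit closure $\overline{\Gamma.p}$ admits a clean dichotomy --- it is either a finite set or the whole plane $\AAA^2_{\Qbar}$ --- and then invoke Whang's algorithm (\Cref{prop:whang}) to decide which case occurs, enumerating the orbit explicitly in the finite case.

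First I would establish the dichotomy. Since $\AAA^2_{\Qbar}$ is irreducible of dimension $2$, the set $X := \overline{\Gamma.p}$ is either all of $\AAA^2$, or at most $1$-dimensional, or $0$-dimensional (hence finite). It suffices to rule out the existence of any $1$-dimensional irreducible component. Suppose such a component $C$ exists. Because $X$ is $\Gamma$-invariant and has only finitely many irreducible components, $\Gamma$ acts by permutations on the set of those components, giving a homomorphism $\Gamma \to S_N$; the stabilizer $\Gamma'$ of $C$ is therefore a finite index subgroup. Now $\Gamma'$ is finitely generated (as a finite index subgroup of a finitely generated group), preserves the curve $C$, and thus by \Cref{coro:blanc-stampfli} is of bounded degree. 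But finite index subgroups of unbounded subgroups of $\Aut(\AAA^2)$ are themselves unbounded: if $\{r_1,\ldots, r_k\}$ are coset representatives and $\deg$ is bounded by $D$ on $\Gamma'$, then sub-multiplicativity $\deg(\phi\circ\psi)\leq \deg(\phi)\deg(\psi)$ yields a uniform bound $D\cdot \max_i \deg(r_i)$ on all of $\Gamma$, a contradiction.

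Given the dichotomy, the algorithm proceeds as follows. Let $S = \{g_1^{\pm 1},\ldots, g_n^{\pm 1}\}$ be a symmetric generating set for $\Gamma$, regarded as a finite set of endomorphisms (in fact automorphisms) of $\AAA^2_{\Qbar}$. The semigroup orbit of $p$ under $S$ coincides with the group orbit $\Gamma.p$, so periodicity of $p$ with respect to $S$ (in the sense of $\{w(p): w \in S^*\}$ being finite) is equivalent to finiteness of $\Gamma.p$. Apply \Cref{prop:whang} to decide this. If $p$ is not $S$-periodic, output $\AAA^2_{\Qbar}$ as the orbit closure. If $p$ is $S$-periodic, perform a breadth-first search, starting from $\{p\}$ and closing under the finitely many generators, which terminates once no new point is produced; the resulting finite set equals $\Gamma.p = \overline{\Gamma.p}$ and is returned as the answer.

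The only real content is the dichotomy in the first paragraph; the rest is bookkeeping on top of the two cited theorems. The small technical point that needs care is the sub-multiplicativity of degree and the resulting stability of unboundedness under passage to finite index subgroups, but this is immediate from the definitions of $\deg$ in $\Aut(\AAA^2)$.
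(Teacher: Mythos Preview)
Your proof is correct and follows essentially the same approach as the paper: establish the finite/dense dichotomy via \Cref{coro:blanc-stampfli} and then invoke \Cref{prop:whang} to decide which case holds. Your dichotomy argument is slightly more elaborate than necessary --- the paper simply observes that the orbit closure itself, if $1$-dimensional, is a $\Gamma$-invariant curve (in the paper's conventions curves need not be irreducible), so \Cref{coro:blanc-stampfli} applies directly to $\Gamma$ without passing to a finite-index stabilizer or invoking sub-multiplicativity of degree.
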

\begin{proof}
Since $\Gamma $ preserves no curve in $\AAA^2 $, the orbit Zariski closure $\overline{\Gamma.p}$ cannot be a curve, or else it contradicts \Cref{coro:blanc-stampfli}. Thus, the dimension of $\overline{\Gamma.p} $ is either 0 or 2. 

By using the algorithm of \Cref{prop:whang}, we can decide whether or not the dimension of $\overline{\Gamma.p} $ is 0; if it is periodic if and only if it is 0-dimensional. If it is 0-dimensional, then $\overline{\Gamma.p}=\Gamma.p $ has cardinality less than the bound of \Cref{prop:whang}, and we compute the set $\Gamma.p $ by simply enumerating all its points. 

If $\overline{\Gamma.p} $ is not 0-dimensional, then $\overline{\Gamma.p}=\AAA^2 $.  
\end{proof}

\section{Conjugacy Problem in Amalgamated Free Product}
\label{section:grp-theory}

\par In this section, I collect some basic facts on amalgamated product and prove a criterion to decide whether a finitely generated subgroup in the amalgamated product is conjugate to a subgroup of one of the factors. 

\par For a group $G$ and its subgroups $A$ and $B$, suppose $G$ has the structure of the amalgamated free product $G=A*_{A\cap B}B $. More precisely, it means that the subgroups $A$ and $B$ of $G$ satisfy the following isomorphism of groups:
\[
  G\cong A*_{A\cap B}B:=\langle a\in A,b\in B \, | \, ab^{-1}=1 \text{ if $a= b$ in $A\cap B $}  \rangle  \, . 
\]

\par The subgroups $A$ and $B$ are called the \textit{factors} of the amalgamated product $G$. In this section, $G$ is always the amalgamated product over the intersection $A*_{A \cap B} B $.

\begin{definition}
  For $g\in G$, suppose $g=g_1\cdots g_r$ is such that $g_i$ and $g_{i+1}$ lie in different factors for all $i$, and none of $g_i$ is contained in the intersection $A\cap B$ of the factors. Then, the expression $g=g_1\cdots g_r$ is called the \textit{reduced expression}.
\end{definition}

\par The following proposition states that the reduced expression enjoys a uniqueness property modulo $A\cap B $. I refer the proof to any book on combinatorial group theory, for example \cite{Magnus} or \cite{Schupp}.

\begin{prop}
\label{reduced-expr-unique}
\protect{\cite[Corollary 4.4.2]{Magnus}}
Let $g\in G- (A\cap B) $. If $g=g_1 \cdots g_r = g_1' \cdots g_s' $ are two distinct reduced expressions of $g$, then $r=s $. Moreover, there exist $c_1=1, c_2,\cdots, c_r\in A\cap B $ such that $g_r'=c_r g_r $ and $g_i' c_{i+1}=c_i g_i $ for $i=1,\cdots, r-1 $. This implies that $g_i $ and $g_i' $ are in the same factor for all $i=1,\cdots, r $.
\end{prop}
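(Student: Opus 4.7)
The plan is to derive the statement from the classical \emph{normal form theorem} for amalgamated free products, which I would take as a black box citing \cite{Magnus}. Specifically, I would use the theorem in the form that asserts simultaneously: (a) any reduced word of length $\ge 1$ represents a nontrivial element of $G = A *_{A \cap B} B$; (b) the length of any reduced expression of a given element $g \in G$ is an invariant of $g$. Both facts are proved together by van der Waerden's trick: one fixes transversals of $A \cap B$ in $A$ and $B$, constructs a faithful permutation representation of $G$ on the set of formal normal-form sequences, and reads off (a) and (b) as immediate corollaries of faithfulness.

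Granted (a) and (b), length invariance yields $r = s$ at once, so the remaining task is the construction of the $c_i \in A \cap B$ by induction on $r$. The case $r = 1$ is trivial: $g_1 = g_1'$ directly, and $c_1 = 1$ works. For $r \ge 2$, I first show that $g_1$ and $g_1'$ lie in the same factor. Otherwise, the rearrangement $(g_1')^{-1} g_1 g_2 \cdots g_r = g_2' g_3' \cdots g_s'$ would display the same element of $G$ as a reduced word of length $r + 1$ on the left (the leading $(g_1')^{-1}$ sits in the factor opposite to $g_1$, so all alternations hold and no letter is in $A \cap B$) and a reduced word of length $r - 1$ on the right, contradicting (b). Hence $g_1, g_1'$ share a factor, and $c_2 := (g_1')^{-1} g_1$ lies in that factor. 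An analogous length-mismatch argument applied to $c_2 g_2 \cdots g_r = g_2' \cdots g_s'$ forces $c_2 \in A \cap B$: otherwise the left side is a reduced expression of length $r$ while the right is reduced of length $r - 1$, again contradicting (b).

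With $c_2 \in A \cap B$ secured, substitute $g_1 = g_1' c_2$ into the original equation and cancel $g_1'$ from both sides to obtain
\[
(c_2 g_2) g_3 \cdots g_r = g_2' g_3' \cdots g_s'.
\]
Since $c_2 \in A \cap B$ and $g_2 \notin A \cap B$ lies in the factor opposite to $g_1$, the product $c_2 g_2$ lies in that opposite factor and is outside $A \cap B$ (otherwise $g_2 = c_2^{-1}(c_2 g_2) \in A \cap B$, a contradiction). Hence both sides are reduced expressions of common length $r - 1$. Applying the inductive hypothesis produces $\tilde c_1 = 1, \tilde c_2, \ldots, \tilde c_{r-1} \in A \cap B$ realizing the telescoping identities for the shorter pair. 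Setting $c_{i+1} := \tilde c_i$ for $i = 2, \ldots, r-1$, together with the $c_2$ already constructed, gives the full collection $c_1 = 1, c_2, \ldots, c_r$ with the required properties.

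I expect the main obstacle to be the careful reindexing in the inductive step: one must verify that the $\tilde c_i$ extracted from the shorter pair align with the required $c_{i+1}$ for the original pair so that the relations $g_r' = c_r g_r$ and $g_i' c_{i+1} = c_i g_i$ hold after the shift. Aside from this bookkeeping, the genuine mathematical content is entirely packaged into the normal form theorem, whose proof via van der Waerden's trick is classical and cited.
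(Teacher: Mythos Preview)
The paper does not supply its own proof of this proposition: it is stated with the citation \cite[Corollary 4.4.2]{Magnus} and the preceding paragraph explicitly defers the proof to standard references on combinatorial group theory. So there is nothing to compare against beyond the cited source.

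Your argument is correct. You take the normal form theorem (stated in the paper as \Cref{prop:normal-form}, also cited from \cite{Magnus}) as a black box, extract length invariance to get $r=s$, and then run a clean induction on $r$ to manufacture the $c_i$. The two length-mismatch contradictions are sound: in the first, $(g_1')^{-1}$ and $g_1$ lie in opposite factors so the left word is genuinely reduced of length $r+1$; in the second, $c_2$ lies in the factor of $g_1$, hence opposite to $g_2$, so if $c_2\notin A\cap B$ the left word is reduced of length $r$. The inductive bookkeeping is also right: with $h_1=c_2 g_2$ and $h_j=g_{j+1}$ for $j\ge 2$, the inductive $\tilde c_j$ translate to $c_{j+1}$ exactly as you indicate, and the case $i=2$ works because $\tilde c_1=1$ absorbs into $h_1=c_2 g_2$.

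The only cosmetic point is that your black box (b) is essentially the first clause of the proposition itself; it would be slightly cleaner to phrase the black box as the normal form theorem (existence and uniqueness of the transversal-based normal form, i.e.\ \Cref{prop:normal-form}) and observe that both (a) and (b) are immediate corollaries. This is purely a matter of presentation and does not affect correctness.
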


In virtue of \Cref{reduced-expr-unique}, we define the following. 

\begin{definition}
Let $g\in G- (A \cap B)$, and suppose $g= g_1 \cdots g_r $ is a reduced expression of $g$. We define $r$ as the \textit{length of $g$}, denoted by $l(g)=r$. This definition is well-defined by \Cref{reduced-expr-unique}, independent of the chosen reduced expression of $g$. If $g\in A\cap B$, then we set $l(g)=0$.

\par If a subgroup $H\leq G$ is such that the length of elements in $H$ is bounded, then $H$ is said to be \textit{bounded}. If $g\in G$ is such that $\langle g \rangle \leq G$ is bounded, then $g$ is said to be \textit{bounded}.
\end{definition}

\par If we have chosen a fixed set of coset representatives for $A\cap B $ in $A$ and $B$ respectively, the uniqueness of \Cref{reduced-expr-unique} can be stated as follows:

\begin{prop}
\label{prop:normal-form}
\cite[Theorem 4.4]{Magnus} 
Let $\calK_A $ and $\calK_B $ denote sets of right coset representatives for $(A\cap B)\backslash A $ and $(A\cap B) \backslash B $, respectively. Furthermore, suppose the identity $1$ belongs to both $\calK_A $ and $\calK_B $. Then, any element $g\in G $ possesses a unique reduced expression $g= hg_1 \cdots g_r $ satisfying the following conditions:
\begin{enumerate}[label=(\roman*)]
\item $h\in A\cap B $,
\item each $g_i $ belongs to $\calK_A - \{1 \} $ or $\calK_B - \{ 1\} $ for each $i=1,\cdots, r$,
\item $g_i$ and $g_{i+1} $ reside in different factors for each $i=1,\cdots, r-1$.
\end{enumerate}
\end{prop}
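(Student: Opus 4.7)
The strategy is to establish existence by induction on $r = l(g)$, using the coset decomposition $a = c k$ with $c \in A\cap B$ and $k \in \calK_A$ (and similarly for $B$) to peel off a leading factor, and to establish uniqueness by absorbing the initial $h$ into $g_1$ and invoking \Cref{reduced-expr-unique}. A useful observation throughout is that $A\cap B$ is itself the coset of $1$, so $\calK_A - \{1\}$ and $\calK_B - \{1\}$ are disjoint from $A\cap B$.

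\textbf{Existence.} Induct on $r$. The case $r = 0$ is trivial: set $h = g \in A\cap B$ with empty tail. For $r \geq 1$, take any reduced expression $g = g_1' g_2' \cdots g_r'$ and apply the inductive hypothesis to $g_2' \cdots g_r'$ to obtain a normal form $h' k_2 \cdots k_r$. Then $g_1' h'$ lies in the same factor as $g_1'$, so the coset decomposition in that factor gives $g_1' h' = h k_1$ with $h \in A\cap B$ and $k_1$ a coset representative. Since $g_1' \notin A\cap B$ and $h' \in A\cap B$, we have $g_1' h' \notin A\cap B$, forcing $k_1 \neq 1$; moreover $k_1$ is in the same factor as $g_1'$, while $k_2$ is in the same factor as $g_2'$, so consecutive $k_i$'s alternate factors, yielding the required normal form.

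\textbf{Uniqueness.} Suppose $g = h g_1 \cdots g_r = h' g_1' \cdots g_s'$ are two normal forms. If $g \in A \cap B$, a quick check shows no reduced expression of positive length lies in $A\cap B$ (otherwise the trivial length-$0$ expression of that element would contradict \Cref{reduced-expr-unique}), forcing $r = s = 0$ and $h = h' = g$. Otherwise, absorb the leading factor by setting $\tilde{g}_1 = h g_1$ and $\tilde{g}_1' = h' g_1'$; each stays in the same factor as $g_1$ (respectively $g_1'$) and outside $A\cap B$, so we obtain two reduced expressions of $g$. Applying \Cref{reduced-expr-unique} yields $r = s$ and elements $c_1 = 1, c_2, \ldots, c_r \in A\cap B$ with $g_r' = c_r g_r$ and $g_i' c_{i+1} = c_i g_i$ (using $\tilde g_1, \tilde g_1'$ for $i=1$). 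A backward induction on $i$ from $r$ shows $c_i = 1$ and $g_i = g_i'$ throughout: whenever $c_{i+1} = 1$, the relation reads $g_i' = c_i g_i$, so $g_i$ and $g_i'$ represent the same right coset of $A\cap B$ and hence must coincide, forcing $c_i = 1$. Finally $\tilde{g}_1 = \tilde{g}_1'$ together with $g_1 = g_1'$ gives $h = h'$.

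\textbf{Main obstacle.} The principal technical point is the backward induction in the uniqueness argument: the conjugating elements $c_i$ from \Cref{reduced-expr-unique} must be shown to collapse to the identity, and this rests entirely on the uniqueness of representatives within each right coset of $A\cap B$. The existence half is essentially bookkeeping, so the uniqueness mechanism is what the statement really adds beyond \Cref{reduced-expr-unique}.
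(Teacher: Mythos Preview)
The paper does not give its own proof of this proposition; it is simply quoted from \cite{Magnus} as a standard result, so there is nothing in the paper to compare against.

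Your argument is correct and is essentially the textbook proof: existence by induction on the length via the coset decomposition in each factor, and uniqueness by absorbing $h$ into the first letter and invoking \Cref{reduced-expr-unique}, then collapsing the intertwining constants $c_i$ using the uniqueness of coset representatives. One small point of phrasing: in your last sentence, the backward induction at $i=1$ does not directly produce $g_1=g_1'$, because the letters being compared there are $\tilde g_1=hg_1$ and $\tilde g_1'=h'g_1'$, which are not themselves coset representatives. What the induction gives (using $c_1=1$ and $c_2=1$) is $\tilde g_1=\tilde g_1'$, i.e.\ $hg_1=h'g_1'$; one then applies the uniqueness of the coset decomposition in the common factor once more to conclude $g_1=g_1'$ and $h=h'$ simultaneously. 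With that clarification the proof is complete.
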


The decomposition in the \Cref{prop:normal-form} is called the \textit{normal form} with respect to the chosen representatives $\calK_A $ and $\calK_B $.

\par Now, we state the criterion to decide whether a given finitely generated group $H\leq G$ is conjugate into one of the factors. The theorem also suggests the method to actually find the conjugator when factorization problem is decidable in $G$. I temporarily postpone the proof until I have introduced the notion of \textit{cyclically reducedness}. 

\begin{theo}
\label{main-criterion}
Suppose $G=A*_{A\cap B} B$ is an amalgamated product and $H=\langle g_1,\cdots, g_n\rangle \leq G $ a finitely generated subgroup. Suppose $g_1 $ is the generator with maximal length, i.e. $l(g_1)=\max_{j=1,\cdots, n} \{l(g_j) \} $. Then:
\begin{enumerate}[label=(\roman*)]
\item $H$ is conjugate to a subgroup of one of the factors if and only if the generators $g_1,\cdots, g_n$ and the products $g_{1} g_j$ for all $j\neq 1 $ are bounded elements.
\item Suppose that $H$ is conjugate into one of the factors and that $l(g_{1})>1 $. Then, $g_{1} $ has odd length. Let $g_{1}=k_1\cdots k_{2s+1} $ be a reduced expression of $g_{1} $ where $s \geq 1 $. Then, $(k_1\cdots k_s)^{-1} H (k_1\cdots k_s) $ is contained in one of the factors.
\end{enumerate}
\end{theo}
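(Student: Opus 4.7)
My plan is to develop cyclic reducedness first, then prove each part.

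I will define $g = k_1\cdots k_r$ (reduced) to be \emph{cyclically reduced} if $r\leq 1$ or $k_r k_1\notin A\cap B$. Standard arguments show every element is conjugate to a cyclically reduced one, cyclically reduced elements of length $\geq 2$ have $l(g^n) = n\cdot l(g)$ (hence are unbounded), and consequently $g$ is bounded if and only if its cyclic reduction has length $\leq 1$, if and only if $g$ is conjugate into a factor. In particular, an elliptic $g$ with $l(g) > 1$ has odd length $2s+1$ and decomposes as $g = c\cdot a\cdot c^{-1}$ with $l(c) = s$, $a$ in some factor $F$, and $c$'s last letter not in $F$; when $l(g) \geq 3$ one further has $a \in F\setminus (A\cap B)$ (otherwise the expression would collapse further).

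For part (ii), I pick a minimal-length conjugator $d$ with $d^{-1} H d \subset F'$. Then $d^{-1} g_1 d = a' \in F'$ and $g_1 = d a' d^{-1}$ is reduced of length $2 l(d) + 1$, forcing $l(d) = s$. By uniqueness of reduced expressions (\Cref{reduced-expr-unique}), $d = k_1\cdots k_s \cdot e$ for some $e \in A\cap B$, giving
\[
(k_1\cdots k_s)^{-1} H (k_1\cdots k_s) = e \cdot d^{-1} H d \cdot e^{-1} \subset e F' e^{-1} = F',
\]
since $e\in A\cap B\subset F'$.

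For part (i) the forward direction is immediate since boundedness is conjugation-invariant. For the backward direction, I split on $l(g_1)$. When $l(g_1) = 0$, maximality forces $H \subset A\cap B \subset A$; when $l(g_1) = 1$ with $g_1 \in F$, any $g_j$ in the other factor but outside $A\cap B$ would make $g_1 g_j$ a length-$2$ cyclically reduced word, hence unbounded, contradicting the hypothesis, so $H \subset F$. For $l(g_1) = 2s+1 \geq 3$ I conjugate $H$ by $k = k_1\cdots k_s$ as in part (ii): the generators $g_j' = k^{-1} g_j k$ remain bounded, the products $g_1' g_j'$ remain bounded, and $g_1' = a \in F\setminus(A\cap B)$. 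It remains to show $g_j' \in F$ for each $j$, which then yields both (i) and (ii).

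For this last step I would invoke the Bass-Serre tree action of $G$: $G$ acts on a tree $T$ whose vertices are the cosets of $A$ and of $B$, and an element is bounded if and only if it fixes a vertex. A central fact for amalgamated products is that whenever $g \notin A$ one has $gAg^{-1} \cap A \subseteq A\cap B$; applied to $a \in F\setminus (A\cap B)$, the set of vertices it fixes reduces to the singleton $\{v_F\}$. Combined with Serre's Lemma (three elliptic elements with elliptic pairwise products share a common fixed vertex), the triple $g_1', g_j', g_1' g_j'$ being elliptic forces $g_j'$ to fix $v_F$, so $g_j' \in F$. I expect the main obstacle to be establishing this singleton property of the fixed-vertex set of $a$, which is the combinatorial heart of the argument; alternatively one can bypass the tree by a direct iterated cyclic reduction of $a g_j'$, using $a \notin A\cap B$ to ensure the reduction does not fully collapse, so that any hypothetical $l(g_j') \geq 3$ would force $a g_j'$ to be cyclically reduced of length $\geq 2$, hence unbounded, contradicting the hypothesis.
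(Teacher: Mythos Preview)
Your outline has a genuine gap in the backward direction of (i), precisely at the step you flag as the ``combinatorial heart.'' The claim that $a\in F\setminus(A\cap B)$ has fixed-vertex set equal to the singleton $\{v_F\}$ is false in general. Your malnormality fact ``$g\notin A\Rightarrow gAg^{-1}\cap A\subseteq A\cap B$'' only shows that $a$ fixes no type-$A$ vertex other than $v_A$; it says nothing about type-$B$ vertices. Concretely, $a$ fixes the neighbour $b\cdot v_{F'}$ (for $b\in F$) exactly when $b^{-1}ab\in A\cap B$, and this can certainly occur: take $A=S_3$, $B=\ZZ/4$, $A\cap B=\langle(12)\rangle$, $a=(13)$, $b=(23)$. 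Since you only assume the products $g_1g_j$ are bounded (not all $g_ig_j$), Helly's theorem for trees does not apply either, so you cannot conclude that all $g_j'$ share a common fixed vertex, let alone that this vertex is $v_F$. Your alternative (cyclic reduction of $ag_j'$) runs into the same obstruction: with $g_j'=m_1bm_1^{-1}$, $m_1\in F$, $m_1^{-1}am_1\in A\cap B$, the product $ag_j'$ is bounded, so the sketched contradiction does not materialise.

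What rescues the argument is the maximality hypothesis $l(g_j)\le l(g_1)$, which you never invoke after the base case. The paper exploits it via an inductive step (\Cref{lemma:main-criterion}): conjugating by the single letter $k_1^{-1}$ drops $l(g_1)$ by exactly $2$, and, using boundedness of $g_1g_j$, simultaneously drops each $l(g_j)$ by $2$ (or keeps it $\le1$). Thus after one step the conjugated $g_1$ still has maximal length and the hypotheses persist, so one inducts on $l(g_1)$. In your all-at-once conjugation by $k_1\cdots k_s$, the missing ingredient is exactly this: maximality forces the first letter $m_1$ of a hypothetical length-$\ge3$ expression of $g_j'$ to lie in $F'$ rather than $F$ (otherwise $l(g_j)=2s+2t+1>l(g_1)$), and then $ag_j'$ is genuinely cyclically reduced of even length. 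If you add this length-comparison step your direct argument can be completed, but as written it is incomplete.

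Your treatment of (ii) via a minimal-length conjugator $d$ is a nice alternative to the paper's inductive derivation, but it also needs one more line: you must rule out $a'=d^{-1}g_1d\in A\cap B$, since otherwise $da'd^{-1}$ is not reduced and $l(d)$ could exceed $s$. This follows from minimality together with maximality of $l(g_1)$: if $a'\in A\cap B$ then every $d^{-1}g_jd$ also lies in $A\cap B$ (else $l(g_j)=2l(d)+1>l(g_1)$), whence the truncation $d_1\cdots d_{m-1}$ is a shorter conjugator.
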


\par The theorem is version of Serre's theorem for finitely generated subgroup.

\begin{prop}[\protect{\cite[I.4.3. Theorem 8.]{serre-trees}}]
\label{serre-tree}
Subgroup of $G$ is conjugate to a subgroup of $A$ or $B$ if and only if it is bounded. In particular, an element $g\in G $ is conjugate an element in $A$ or $B$ if and only it is a bounded element.
\end{prop}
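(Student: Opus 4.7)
The plan is to deduce this from Bass-Serre theory by constructing the tree on which $G = A *_{A\cap B} B$ naturally acts and reducing the statement to a fixed-point property. Concretely, I would form the Bass-Serre tree $T$ whose vertex set is the disjoint union $G/A \sqcup G/B$ and whose edge set is $G/(A\cap B)$, where the edge $g(A\cap B)$ joins the vertices $gA$ and $gB$. Under the left translation action of $G$, the vertex stabilizers are the conjugates of $A$ and $B$, and the edge stabilizers are conjugates of $A\cap B$. Hence a subgroup $H \leq G$ is conjugate into $A$ or $B$ precisely when $H$ fixes some vertex of $T$.

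First I would establish a dictionary between word length and tree distance: for the basepoint $v_0 = A \in G/A$ and any $g \in G$ with reduced expression $g = g_1 \cdots g_r$, the sequence $v_0, g_1 v_0, g_1 g_2 v_0, \ldots, g v_0$ is a non-backtracking path, because alternation of factors together with the condition $g_i \notin A \cap B$ prevents successive vertices from coinciding or the path from doubling back. Since $T$ is a tree, non-backtracking paths are geodesics, so $d_T(v_0, g v_0) = l(g)$. Consequently a subgroup $H \leq G$ is bounded in length if and only if its orbit $H \cdot v_0$ has bounded diameter in $T$.

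Next I would invoke the standard fixed-point theorem for group actions on trees: any group acting on a tree with a bounded orbit must fix either a vertex or the midpoint of some edge (which corresponds to inverting that edge). In our Bass-Serre tree, each edge has one endpoint in $G/A$ and the other in $G/B$, so the $G$-action preserves the bipartition and no element can swap the endpoints of any edge. Thus a bounded subgroup $H$ is forced to fix some vertex $gA$ or $gB$, yielding $H \subseteq gAg^{-1}$ or $H \subseteq gBg^{-1}$. Conversely, any conjugate of $A$ or $B$ is a vertex stabilizer and therefore acts with a bounded orbit, so it is itself bounded in length. The "in particular" assertion about individual elements is the special case $H = \langle g \rangle$.

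The main technical hurdle I anticipate is the careful verification that reduced words correspond to geodesics in $T$, i.e.\ ruling out backtracking at every intermediate vertex; this is essentially a reformulation of the normal form theorem (\Cref{prop:normal-form}) and its uniqueness clause (\Cref{reduced-expr-unique}). Once this length-distance dictionary and the bipartite structure of $T$ are in place, the tree fixed-point theorem takes care of the rest.
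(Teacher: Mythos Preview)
The paper does not give its own proof of this proposition; it is quoted from Serre \cite[I.4.3, Theorem~8]{serre-trees} and used as a black box. Your Bass--Serre tree argument is exactly the approach in Serre's book, so you are reproducing the cited source rather than diverging from anything the paper does.

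One technical correction: the equality $d_T(v_0, gv_0) = l(g)$ is not literally true, and the sequence $v_0, g_1 v_0, g_1 g_2 v_0, \ldots$ you wrote down is not a path in $T$. All of these vertices lie in $G/A$, so no two consecutive ones are adjacent in the bipartite tree; moreover if $g_1 \in A$ then $g_1 v_0 = v_0$. The actual geodesic from $A$ to $gA$ alternates between $G/A$- and $G/B$-vertices, and one gets $\lvert d_T(A, gA) - l(g)\rvert \leq 1$, the discrepancy depending on which factors the first and last letters of $g$ lie in. This does not affect your argument, since bounded length is still equivalent to a bounded orbit, but you should replace your path by the standard alternating one before claiming it is a geodesic.
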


\par Following notion is useful in dealing with conjugacy problem in the amalgamated product.

\begin{definition}
\cite[Section 4.2]{Magnus}
An element $g\in G$ is called \textit{cyclically reduced} if $l(g)\leq 1 $ or $l(g) $ is even. These are the elements which start and end with letters from distinct factors. For $g\in G $, if $\overline{g}\in G $ is conjugate with $g$ and is cyclically reduced, then $\overline{g} $ is called the \textit{cyclically reduced form} of $g$.
\end{definition}

The proof of the proposition below tells how to deduce the cyclically reduced form of given $g \in G $.

\begin{propp}[\cite{Magnus}]
\label{sec4-exist-cyclic-reduced}
Every $g\in G$ has a cyclically reduced form.   
\end{propp}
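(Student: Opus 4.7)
The plan is to induct on the length $r=l(g)$. The base cases $r\leq 1$ and $r$ even are immediate from the definition: $g$ is its own cyclically reduced form. So the only real content is the inductive step, where $r\geq 3$ is odd.

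For the inductive step, fix a reduced expression $g=g_1 g_2\cdots g_r$. Since adjacent letters alternate between the two factors and $r$ is odd, $g_1$ and $g_r$ lie in the \emph{same} factor, say $A$. I exploit this by conjugating with $g_1$:
\[
  g':=g_1^{-1}\,g\,g_1 \;=\; g_2\,g_3\,\cdots\,g_{r-1}\,(g_r g_1),
\]
where $g_r g_1\in A$. The point is that $l(g')<r$. Indeed, if $g_r g_1\notin A\cap B$, then the displayed expression is already reduced and has length exactly $r-1$. If instead $g_r g_1\in A\cap B$, call it $h$; then since $g_{r-1}\in B$ and $h\in A\cap B\subseteq B$, the product $g_{r-1}h$ lies in $B$, and we may rewrite $g'=g_2\cdots g_{r-2}(g_{r-1}h)$. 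Whether or not $g_{r-1}h$ lies in $A\cap B$, further absorption can only decrease the length, so in this case $l(g')\leq r-2$. In either case $l(g')<l(g)=r$, and the inductive hypothesis furnishes a cyclically reduced $\overline{g'}$ conjugate to $g'$. Since $g'$ is conjugate to $g$, so is $\overline{g'}$, which therefore serves as a cyclically reduced form of $g$.

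There is no real obstacle here; the only subtlety is making sure the length really does strictly decrease, which requires treating the case $g_r g_1\in A\cap B$ carefully as above and using $r\geq 3$ so that the block $g_2\cdots g_{r-1}$ is non-empty and the absorption into $g_{r-1}$ makes sense. This same inductive procedure is constructive: one repeatedly conjugates the current expression by its first letter until the first and last letters reside in different factors (or the length drops to at most $1$), giving an explicit algorithm to produce $\overline{g}$.
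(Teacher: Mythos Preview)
Your proof is correct and follows essentially the same approach as the paper's: both conjugate $g$ by its first letter $g_1$ and split into the two cases $g_rg_1\notin A\cap B$ (length drops to $r-1$, which is even) and $g_rg_1\in A\cap B$ (length drops to $r-2$), the only cosmetic difference being that you phrase the argument as an induction on $l(g)$ while the paper phrases it as an iterative procedure. One minor remark: in your second case you hedge with ``whether or not $g_{r-1}h$ lies in $A\cap B$'', but in fact $g_{r-1}h\notin A\cap B$ necessarily (since $g_{r-1}\in B\setminus A$ and $h\in A\cap B$ force $g_{r-1}h\in B\setminus A$), so $l(g')=r-2$ exactly, as the paper asserts.
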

\begin{proof}
  Suppose $g=g_1\cdots g_{r} \in G$ is in its reduced expression but is not cyclically reduced. Therefore, $r\geq 3 $, and $g_1$ and $g_r$ reside in the same factor, say $g_1, g_2 \in A- B $. One may attempt to obtain the cyclically reduced form of $g$ by conjugating it with $g_1^{-1}$, i.e., $g_1^{-1}g g_1= g_2\cdots g_{r-1} g_{r} g_1 $. If $g_r g_1 \notin A\cap B $, then $g_2 \in B- A$ while $g_rg_1 \in A-B $, and $g_1^{-1}g g_1=g_2\cdots g_{r-1}(g_r g_1) $ is a reduced expression of length $r-1 $. Hence, we arrive at the cyclically reduced form of $g$. 
  
  \par However, if $g_r g_1 \in A\cap B $, then the reduced expression is $g_1^{-1} g g_1= g_2\cdots g_{r-2} (g_{r-1} g_r g_1)$, which has length $r-2 $. It is not cyclically reduced unless $r-2=1$. In that case, we can again conjugate $g_1^{-1}g g_1 $ by the first letter and repeat the procedure until we reach the cyclically reduced form. 
\end{proof}

Although the cyclically reduced form of $g$ is not unique, the proposition below states that the length of the cyclically reduced form is uniquely determined. Moreover, if the length of the cyclically reduced form is greater than $1$, then the cyclically reduced form is unique up to cyclic permutation and conjugation by an element of $A\cap B$.

\begin{propp}[\cite{Magnus}]
\label{sec4-unique-cyc-reduced}
Suppose $g\in G $. Let $\overline{g}, \overline{g}'$ be the cyclically reduced forms of $g$. 
\begin{enumerate}[label=(\roman*)]
  \item If $l(\overline{g})\leq 1 $, then $l(\overline{g}')\leq 1 $.
  \item If $l(\overline{g})\geq 2 $, then $l(\overline{g})=l(\overline{g}') $. Moreover, $\overline{g}' $ is obtained by cyclic permutation of the letters of $\overline{g} $ and then conjugation by an element of $A\cap B $. More precisely, if $\overline{g}=g_1\cdots g_r $ is a reduced expression of $\overline{g} $, then there exists a cyclic permutation $\sigma\in S_r $ and some $h\in A\cap B $ such that $\overline{g}'=hg_{\sigma(1)}\cdots g_{\sigma(r)} h^{-1} $. 
\end{enumerate}
\end{propp}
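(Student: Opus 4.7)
The plan is to write $\overline{g}' = h\overline{g} h^{-1}$ for some $h\in G$ (such $h$ exists since both $\overline{g}, \overline{g}'$ are conjugate to $g$) and to induct on the length $l(h)$. The linchpin is the observation that when $\overline{g} = g_1\cdots g_r$ is cyclically reduced with $r = l(\overline{g})\geq 2$, the reduced expression of $\overline{g}^n$ is the literal concatenation $g_1\cdots g_r\, g_1\cdots g_r\,\cdots\, g_1\cdots g_r$ of length $nr$: no cancellation occurs at the junctions because $g_r$ and $g_1$ belong to opposite factors by cyclical reducedness.

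For part (i), I combine this growth observation with Serre's theorem (\Cref{serre-tree}). If $l(\overline{g})\leq 1$, then $\overline{g}$ lies in a single factor, so $\langle \overline{g}\rangle$ is bounded. Since boundedness of the generated cyclic subgroup is a conjugacy invariant, $\langle \overline{g}'\rangle$ is bounded as well, and the growth observation then forbids $l(\overline{g}')\geq 2$. Hence $l(\overline{g}')\leq 1$.

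For part (ii), the contrapositive of (i) yields $l(\overline{g}')\geq 2$. I would first establish the equality $l(\overline{g}) = l(\overline{g}')$ from the Lipschitz bound $|l(hxh^{-1})-l(x)|\leq 2\, l(h)$, a direct consequence of the subadditivity $l(uv)\leq l(u)+l(v)$ (which itself follows from concatenating reduced expressions). Applied with $x = \overline{g}^n$ this gives $|n\, l(\overline{g}') - n\, l(\overline{g})|\leq 2\, l(h)$; dividing by $n$ and sending $n\to\infty$ forces the equality. For the cyclic-permutation claim I induct on $l(h)$, choosing $h$ of minimal length among all conjugators. The base case $h\in A\cap B$ is a direct computation using \Cref{prop:normal-form}: the expression $h\overline{g} h^{-1} = (hg_1)g_2\cdots g_{r-1}(g_r h^{-1})$ is reduced of length $r$ (since $hg_1, g_r h^{-1}\notin A\cap B$), matching the trivial cyclic permutation conjugated by $h$. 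For the inductive step with $l(h)\geq 1$, factor $h = h_0 h_s$ with $h_s$ a single letter outside $A\cap B$ and $l(h_0)=l(h)-1$, and show that $h_s\overline{g} h_s^{-1}$ equals a one-step cyclic rotation of $\overline{g}$ conjugated by an element of $A\cap B$; the inductive hypothesis applied to $h_0$ acting on this rotated cyclically reduced word then assembles $\overline{g}'$ in the claimed form.

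The main obstacle is the single-letter conjugation step. After symmetry I may assume $h_s$ lies in the same factor as $g_r$, and the crucial claim is that $g_r h_s^{-1}\in A\cap B$. If not, then $h_s\overline{g} h_s^{-1} = h_s g_1\cdots g_{r-1}(g_r h_s^{-1})$ is reduced of length $r+1$ with first and last letters in the same factor; iterating the cancellation analysis of \Cref{sec4-exist-cyclic-reduced} on the outer conjugation by $h_0$ and comparing with the length equality $l(\overline{g}') = r$ forces $h$ to be shortenable, contradicting the minimal choice of $l(h)$. Once $g_r h_s^{-1} \in A\cap B$ is established, setting $h' := h_s g_r^{-1} = (g_r h_s^{-1})^{-1}\in A\cap B$ and computing directly one verifies $h_s\overline{g} h_s^{-1} = h'\,(g_r g_1 g_2\cdots g_{r-1})\,(h')^{-1}$, exhibiting the desired one-step cyclic rotation of $\overline{g}$ modulo $A\cap B$-conjugation. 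Composing these single-letter rotations through the $s$ letters of $h$ (using that the composition of cyclic permutations is cyclic) reconstructs $\overline{g}'$ in the form $h\,(g_{\sigma(1)}\cdots g_{\sigma(r)})\,h^{-1}$ with $\sigma$ cyclic and $h\in A\cap B$.
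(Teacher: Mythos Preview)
Your approach is essentially the paper's: induct on the length of a conjugator $h$ with $\overline{g}' = h\overline{g}h^{-1}$, and show that single-letter conjugation yields a one-step cyclic rotation modulo $A\cap B$. The paper treats only $l(h)=1$ explicitly---using that $\overline{g}'$ is cyclically reduced to force the cancellation $g_r h^{-1}\in A\cap B$ (otherwise $\overline{g}'$ would have odd length $r+1>1$)---and leaves the induction as an exercise.

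One wrinkle worth cleaning up: your minimality hypothesis on $l(h)$ is unnecessary and actually interferes with the induction. In the inductive step you pass from $h$ to $h_0 h'$ (of length $l(h)-1$) conjugating the rotated word into $\overline{g}'$, but this new conjugator need not be \emph{minimal} for the new pair, so your inductive hypothesis as stated does not apply. The fix is simply to drop minimality and induct on $l(h)$ for an arbitrary conjugator. The crucial claim $g_r h_s^{-1}\in A\cap B$ then follows not from ``$h$ being shortenable'' but directly from the length equality you already established: if $g_r h_s^{-1}\notin A\cap B$, the expression
\[
h_1\cdots h_{m-1}\, h_s\, g_1\cdots g_{r-1}\,(g_r h_s^{-1})\, h_{m-1}^{-1}\cdots h_1^{-1}
\]
is already reduced (every adjacent pair lies in opposite factors, since $h_{m-1}$ is in the factor opposite to $h_s$, hence opposite to both endpoints), giving $l(\overline{g}')=r+2m-1\neq r$, a contradiction. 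This is exactly the mechanism the paper uses in the base case. Your separate arguments for (i) via boundedness and for the length equality via the Lipschitz--growth trick are correct but not needed once the cyclic-rotation description is in hand.
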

\begin{proof}
I will just prove (ii), and the proof of (i) is done by the same argument with a little modification of the indices. Since $\overline{g} $ and $\overline{g}' $ are both cyclically reduced forms of $g$, there exists an element $a\in G$ such that $a\overline{g}a^{-1}= \overline{g}' $. For simplicity, assume $l(a)=1 $, and the general case easily follows by induction on the length of the conjugator. 

\par Since $a\overline{g} a^{-1}= ag_1\cdots g_r a^{-1} $ is cyclically reduced, either $g_1 $ or $g_r $ is in the same factor with $a$, and $a$ cancels out with either the first or the last letter of $\overline{g}$. Suppose $g_r$ and $a$ are in the same factor and that $l(g_r a^{-1})=0$. Denote by $g_r a^{-1}=h $. Then, 
\[
  a\overline{g} a^{-1} = h^{-1} g_r g_1\cdots g_{r-1} h
\]
is a reduced expression, which is indeed cyclic permutation of letters of $\overline{g} $ and conjugation by an element $h\in A\cap B$.  
\end{proof}

\begin{rmk}
\label{rmk:cyc-reduced}
\begin{enumerate}[label=(\roman*)]
\item Suppose $g\in G $ is cyclically reduced and $l(g)>1 $. Then, it is unbounded since $l(g^n)=n l(g) $, and cannot be conjugated into one of the factors by \Cref{serre-tree}.

\item If $g\in G $ can be conjugated into one of the factors, then there exists a reduced expression of the form $g= h_1\cdots h_r \overline{g} h_r^{-1}\cdots h_1^{-1} $. Indeed, suppose $g=h \tilde{g} h^{-1} $ be such that $l(\tilde{g})\leq 1 $, and let $h= h_1\cdots h_n $ be a reduced expression of $h$. If $h_1\cdots h_n \tilde{g} h_n^{-1}\cdots h_1^{-1} $ is not a reduced expression, then the only possibility is that $h_n$ and $\tilde{g}$ are in the same factor, and $h_n \tilde{g} h_n^{-1} $ is of length $\leq 1$. If 
\[
  h_1\cdots h_{n-1}(h_n \tilde{g} h_n^{-1}) h_{n-1}^{-1}\cdots h_1^{-1}  
\]
is still not a reduced expression, then it forces $h_n \tilde{g} h_n^{-1}\in A\cap B $, and $l(h_{n-1} h_n \tilde{g} h_n^{-1} h_{n-1}^{-1})\leq 1 $. After merging sufficiently many letters around $\tilde{g} $, we arrive at the reduced expression of the form $g=h_1 \cdots h_r \overline{g} h_r^{-1} \cdots h_1^{-1} $. 

\par Likewise, if the cyclically reduced form of $g' \in G $ is $g_1'\cdots g_k' $ which has length $k>1 $, then $g' $ has a reduced expression of the form 
\[
  g' = b_1 \cdots b_s g_1'\cdots g_{k-1}' (g_k' b_s^{-1}) b_{s-1} \cdots b_1^{-1}
\]
or $g' = b_1 \cdots b_{s-1} (b_s g_1') g_2' \cdots g_k' b_s^{-1} \cdots b_1^{-1}$ depending on which among $g_1' $ or $g_k' $ is in the same factor with $b_s $. 
\item Suppose $g$ can be conjugated into one of the factors. Regardless of the chosen reduced expression of $g$, the procedure in the proof of \Cref{sec4-exist-cyclic-reduced} leads to an element contained in one of the factors. The procedure terminates at a cyclically reduced form of $g$ and the output of the procedure can't have length $>1 $ or else it is unbounded.
\end{enumerate}
\end{rmk}

Here is a lemma that will be used to prove \Cref{main-criterion}. The lemma states that if $a$ reduces the length of $g_1$ or $g_2 $ by conjugation, then it reduces the length of both $g_1$ and $g_2$.

\begin{lemmap}
\label{lemma:main-criterion}
Suppose $g_1, g_2\in G$ are such that the elements $g_1$ and $g_2$, as well as their product $g_1 g_2$, are each bounded elements. Suppose $l(g_1)>1 $. Let $h\in G$ be such that $l(h)=1$ and $l(h g_i h^{-1})<l(g_i)$ (such $h\in G$ exists by \Cref{rmk:cyc-reduced} (ii)). Then:

\begin{enumerate}[label=(\roman*)]
\item $l(h g_1 h^{-1})= l(g_1)-2 $. 
\item If $l(g_2)>1 $, then $l(hg_2 h^{-1})=l(g_2)-2 $. 
\item If $l(g_2)\leq 1 $, then $(hg_2 h^{-1})\leq 1 $. 
\end{enumerate}
\end{lemmap}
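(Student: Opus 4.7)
The plan is to reduce each part to word combinatorics carried out in the palindromic reduced form supplied by \Cref{rmk:cyc-reduced}(ii).

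First, since $l(g_1)>1$ and $g_1$ is bounded, \Cref{rmk:cyc-reduced}(ii) yields a reduced expression $g_1 = h_1 \cdots h_r \overline{g_1} h_r^{-1} \cdots h_1^{-1}$ with $r \geq 1$, which forces $l(g_1) = 2r+1$ to be odd. I take $h_1 \in A$ without loss of generality. For (i), I would expand $hg_1 h^{-1}$ letter by letter and examine when cancellation can produce a strict length decrease. Direct inspection shows the only possibility is $h \in A$ with $hh_1 \in A \cap B$: any other configuration leaves the reduced length at $2r+1$ or pushes it up to $2r+3$. When $hh_1 \in A \cap B$, setting $c := hh_1$, the element $c$ absorbs into $h_2$ on the left and $c^{-1}$ into $h_2^{-1}$ on the right, producing a reduced expression of length exactly $2r-1 = l(g_1) - 2$.

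For (ii), apply \Cref{rmk:cyc-reduced}(ii) to $g_2$ as well, giving $g_2 = k_1 \cdots k_s \overline{g_2} k_s^{-1} \cdots k_1^{-1}$ with $s \geq 1$. The same word analysis shows that the reduction of $g_2$ by $h$, when it occurs, is by exactly $2$; what still must be proved is that $h$ actually reduces $g_2$, i.e.\ $hk_1 \in A \cap B$, equivalently $k_1 \in h_1(A \cap B)$. I would argue by contraposition, using the hypothesis that $g_1 g_2$ is bounded. If $k_1$ lies in the factor opposite $h_1$, then the concatenation $g_1 g_2$ alternates across the junction $h_1^{-1} k_1$ and is cyclically reduced of length $2r+2s+2$. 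If instead $k_1 \in A$ but $h_1^{-1} k_1 \notin A \cap B$, a single conjugation by $h_1^{-1}$ already produces a cyclically reduced word of length $2r+2s$. Both outcomes contradict boundedness of $g_1 g_2$ via \Cref{serre-tree}.

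For (iii), a similar dichotomy handles $l(g_2) = 1$: if the single letter $g_2$ lies in the factor opposite $h_1$, then $g_1 g_2$ is cyclically reduced of length $2r+2 > 1$, contradicting boundedness; hence $g_2 \in A$, and since $h \in A$ we have $hg_2 h^{-1} \in A$ with $l(hg_2 h^{-1}) \leq 1$. The case $l(g_2) = 0$ is immediate because $g_2 \in A \cap B \subset A$, so again $hg_2 h^{-1} \in A$. The main obstacle is the case analysis in (ii): one must verify, under each hypothesis on $k_1$, that the relevant word form of $g_1 g_2$ is genuinely cyclically reduced after the stated sequence of reductions, which relies on tracking factor-alternation across junctions and on the uniqueness of reduced expressions modulo $A \cap B$ established by \Cref{reduced-expr-unique}.
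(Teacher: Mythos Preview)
Your proposal is correct and follows essentially the same route as the paper: write $g_1$ and $g_2$ in the palindromic reduced form of \Cref{rmk:cyc-reduced}(ii), deduce from $l(hg_1h^{-1})<l(g_1)$ that $hh_1\in A\cap B$, and then rule out the bad configurations for $k_1$ by exhibiting cyclically reduced conjugates of $g_1g_2$. The only cosmetic difference is that in the second sub-case of (ii) you conjugate $g_1g_2$ by $h_1^{-1}$ whereas the paper conjugates by $h$; since $hh_1\in A\cap B$ these differ by conjugation in $A\cap B$ and yield the same conclusion.
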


\begin{proof}
\begin{enumerate}[label=(\roman*)]
\item As in \Cref{rmk:cyc-reduced} (ii), write down a reduced expression for $g_1 $:
\[
  g_1= h_1\cdots h_r \overline{g_1} h_r^{-1}\cdots h_1^{-1} \quad ,\,  r\geq 1 .
\]
Without loss of generality, we may assume that $h_1 \in A-B $.

\par Since $l(hg_1 h^{-1})< l(g_1)$, we have $h h_1\in A\cap B $ and $h \in A-B $. Thus, the reduced expression of $h g_1 h^{-1} $ is 
\[
  h g_1 h^{-1} = (h h_1 h_2) h_3\cdots h_r \overline{g_1} h_r^{-1} \cdots h_3^{-1} (h_2^{-1} h_1^{-1} h^{-1}) \, , 
\]
which has length $l(h g_1 h^{-1})= l(g_1)-2 $. 

\item Write down a reduced expression for $g_2$ as in \Cref{rmk:cyc-reduced}(ii):
\begin{equation*}
  \begin{gathered}
    g_2=k_1\cdots k_s \overline{g_2} k_s^{-1} \cdots k_1^{-1} \quad ,\, s \geq 1. 
  \end{gathered}
\end{equation*}

\par We should have $h_1 $ and $k_1 $ belong to the same factor, i.e. $k_1\in A-B $. Suppose the contrary that $k_1\in B-A $. Then, 
\[
  g_1 g_2=h_1 \cdots h_r \overline{g_1} h_r^{-1} \cdots h_1^{-1} k_1 \cdots k_s \overline{g_2} k_s^{-1} \cdots k_1^{-1} 
\]
is reduced expression and is cyclically reduced, which implies $g_1 g_2 $ is unbounded by (i) of \Cref{rmk:cyc-reduced}. 

\par We should have $h k_1 \in A\cap B $. If not, suppose the contrary that $h k_1 \in A - B $. Therefore, $h_1^{-1} k_1= (h h_1)^{-1}\cdot h k_1 \in A -B$ too. Then, the reduced expression of $h g_1 g_2 h^{-1} $ is 

\[
  hg_1 g_2 h^{-1}= (h h_1 h_2) h_3 \cdots h_r \overline{g_1} h_{r}^{-1} \cdots h_2^{-1} (h_1^{-1} k_1) k_2 \cdots k_s \overline{g_2} k_s^{-1}\cdots k_2^{-1} (k_1^{-1} h^{-1}) .  
\]
Observe that $h h_1 h_2 \in B-A$ while $k_1^{-1} h_1^{-1}\in A-B $. Thus, $h g_1 g_2 h^{-1} $ is cyclically reduced, and by \Cref{rmk:cyc-reduced} (i), $g_1 g_2 $ is unbounded, which is contradiction.

\par In conclusion, $h g_2 h^{-1} $ has the reduced expression
\[
  h g_2 h^{-1}= (h k_1 k_2) k_3\cdots  k_s \overline{g_2} k_s^{-1}\cdots k_3^{-1} (k_2^{-1} k_1^{-1} h^{-1}),
\]
which has length $l(h g_2 h^{-1})= l(g_2)-2$. 

\item If $g_2 \in A\cap B $, then $h g_2 h^{-1} \in A$ and $l(h g_2 h^{-1})\leq 1 $. 
 
\par Suppose $l(g_2)=1$, i.e. $g_2 \not\in A\cap B $. If $g_2 \in B-A $, then 
\[
  g_1 g_2 = h_1\cdots h_r \overline{g_1} h_r^{-1} \cdots h_1^{-1} g_2 
\]
is reduced expression which is cyclically reduced, which is unbounded by \Cref{rmk:cyc-reduced} (i). It contradicts the assumption that $g_1 g_2$ is bounded.

\par Thus, we should have $g_2 \in A - B$. Hence, $hg_2 h^{-1}\in A $ and $l(hg_2 h^{-1})\leq 1 $, as desired. 
\end{enumerate}
\end{proof}

\begin{proof}[Proof of \Cref{main-criterion}]
One direction of (i) is obvious. Thus, we start from the assumption that $g_1$ has the maximal length among the generators, and that both the generators $g_1, \cdots, g_n$ and the products $g_{1} g_2$, $g_1 g_3$, $\cdots, g_{1} g_n$ are bounded elements. Our goal is to prove that $\langle g_1,\cdots, g_n \rangle $ is conjugate to a subgroup of either $A$ or $B$, and that the conjugator is as described in the statement. 

If $l(g_1)=0 $, then since $g_{1}$ has maximal length, it follows
that $l(g_j)=0 $ for all $j=1,\cdots, n $. Therefore, $\langle g_1,\cdots, g_n\rangle \subset A\cap B $, and we are done. 

Suppose $l(g_{1})=1 $. If $g_{1}\in A -B $, the assumption that $
g_{1} g_2, g_{1} g_3,\cdots, g_{1} g_n $ are bounded elements implies that $g_1,\cdots, g_n\in A $. Thus, $\langle g_1,\cdots, g_n \rangle \subset A $, and we are done. In this case, the conjugator is just the identity. 

For the general case $l(g_1) \geq 1 $, we proceed by induction on
the maximal length among the generators. The base case when the maximal
length is 1 is already proven. Suppose $l(g_1)>1 $. The induction
hypothesis states that (i) and (ii) is true for sets of generators whose
maximal length among the generators is strictly smaller than $l(g_1)$. 

Since $g_1$ is bounded, it admits a reduced expression of the form 
\[
  g_1= c_1\cdots c_s \overline{g_1} c_s^{-1} \cdots c_1^{-1}
\]
as in the \Cref{rmk:cyc-reduced}(ii). Thus, $l(g_1)=2s+1 >1$ is odd, and any reduced expression $g_1=k_1\cdots k_{2s+1} $ has the same length $l(g_1)=2s+1 $ (\Cref{reduced-expr-unique}). 

Since the $k_1$ and $k_{2s+1} $ are in the same factor, conjugating $g_1 $ by $k_1^{-1} $ decreases the length, i.e. $l(k_1^{-1} g_1 k_1)<l(g_1) $. By \Cref{lemma:main-criterion}(i), we have precisely $l(k_1^{-1} g_1 k_1)= l(g_1)-2 $. Thus, the reduced expression of $k_1^{-1} g_1 k_1 $ should be
\[
  k_1^{-1} g_{1} k_1 = k_2 \cdots k_{2s-1} (k_{2s} k_{2s+1} k_1^{-1}) \, 
\] 
(if $l(g_1)=3 $, then $k_1^{-1} g_1 k_1= k_2 k_3 k_1^{-1} $ which has length 1).

\par Applying \Cref{lemma:main-criterion} to each pair $g_{1} g_2$, $g_{1} g_3, \cdots, g_{1} g_n  $, we deduce that 
\[ l(k_1^{-1} g_j k_1) \leq l(k_1^{-1} g_{1} k_1) < l(g_{1}) \text{ for all } j=1,\cdots, n . \]
Thus, the conjugated generators $k_1^{-1} g_1 k_1, \cdots, k_1^{-1} g_n k_1 $ have strictly smaller maximal length, and $k_1^{-1} g_{1} k_1 $ has the maximal length among the conjugated generators. We then apply the induction hypothesis to conclude, i.e. $k_1^{-1} H k_1 $ is conjugate into one of the factors with the conjugator $(k_2 \cdots k_s)^{-1} $.

\end{proof}

\section{$\Aut(\AAA^2) $ as an Amalgamated Product of $\Aff $ and $\JJ $}
\label{section:conjugacy-aut-of-affine}

In this section, I use the results of \Cref{section:grp-theory} to give the criterion for when a finitely generated subgroup $G\subset \Aut(\AAA^2) $ is conjugate to either $\Aff $ or $\JJ $ and compute the conjugator. The treatment of basic facts about the polynomial automorphism group follows the treatment of \cite{Friedland-Milnor}, \cite{Essen} and \cite{furter-iteration}.

\par Following is a well-known theorem in the theory of polynomial automorphisms:

\begin{theo}[Jung-van der Kulk \cite{Jung1942} \cite{vdK1953}]
$\Aut(\AAA^2) $ is generated by $\Aff$ and $\JJ $. 
\end{theo}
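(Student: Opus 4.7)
The plan is to prove by induction on $\deg\phi = \max(\deg f, \deg g)$ that every $\phi = (f,g) \in \Aut(\AAA^2)$ lies in $\langle \Aff, \JJ\rangle$. The base case $\deg\phi = 1$ is immediate, since then $\phi \in \Aff$ by definition. For the inductive step, the strategy is to precompose $\phi$ with a carefully chosen element of $\Aff \cup \JJ$ that strictly reduces the degree; in the boundary case where the two coordinate degrees coincide the reduction is instead measured lexicographically in the pair $(\max(\deg f, \deg g),\, \min(\deg f, \deg g))$.

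After precomposing with the coordinate swap $(y,x) \in \Aff$ if necessary, I may assume $m := \deg g \le \deg f =: n$, with $n \ge 2$. The engine of the reduction is the classical leading-form lemma: under these hypotheses $m \mid n$, and writing $n = km$ there exists $\lambda \in \kk^\times$ such that the top-degree homogeneous parts $\bar f, \bar g$ of $f, g$ satisfy $\bar f = \lambda\, \bar g^k$. Granting this, set $\tau := (x - \lambda y^k,\, y)$, which belongs to $\JJ$ (in the paper's form it has $a=1$, $P(y) = -\lambda y^k$, $b=1$, $c=0$). Then $\tau \circ \phi = (f - \lambda g^k,\, g)$, whose first component has had its top form cancelled by construction, so $\deg(f - \lambda g^k) < n$. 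When $m < n$ this strictly reduces $\deg\phi$; when $m = n$ then $k=1$ and $\tau \in \Aff \cap \JJ$, and the pair $(\max,\min)$ drops strictly in lex order. Either way the induction hypothesis applies to $\tau \circ \phi$, giving $\tau \circ \phi \in \langle \Aff, \JJ\rangle$, hence $\phi = \tau^{-1} \circ (\tau \circ \phi) \in \langle \Aff, \JJ\rangle$ as well, since $\tau^{-1} = (x + \lambda y^k, y) \in \JJ$.

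The main obstacle is establishing the leading-form lemma. My approach would be via the Jacobian condition: since $\phi \in \Aut(\AAA^2)$, the determinant $J(f,g) := f_x g_y - f_y g_x$ is a nonzero constant in $\kk$. Comparing homogeneous components, the part of $J(f,g)$ in top degree $m + n - 2$ is exactly $J(\bar f, \bar g)$, which must therefore vanish identically once $\deg\phi \ge 2$. Using Euler's identity for homogeneous forms, $J(\bar f, \bar g) = 0$ translates into $n \bar f_x\, \bar g = m\, \bar f\, \bar g_x$, from which one deduces $\bar f^{\,n} = c\, \bar g^{\,m}$ for some $c \in \bar\kk^\times$; factoring into linear forms over $\bar\kk$ and comparing exponents then yields $\bar f = \alpha H^\mu$, $\bar g = \beta H^\nu$ for a common homogeneous $H \in \bar\kk[x,y]$ with $\gcd(\mu,\nu) = 1$ and $\mu/\nu = m/n$. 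The hardest step is then forcing $\mu = 1$, equivalently $m \mid n$: this does not follow from the Jacobian vanishing alone and requires the full automorphism hypothesis $\kk[f,g] = \kk[x,y]$ to exclude $\mu \ge 2$, which is the delicate subring-theoretic core of the classical Jung--van der Kulk argument. Once $\mu = 1$ is in hand, the relation $\bar f = \lambda\, \bar g^{n/m}$ is forced, with $\lambda \in \kk^\times$ by uniqueness of the top-degree cancellation over the base field.
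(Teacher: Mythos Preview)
The paper does not supply its own proof of this theorem. It records the statement with citations to Jung and van der Kulk, then remarks that ``the theory of links provides swift proof of the above theorem \cite[Corollary 2.13]{blanc-stampfli}.'' That route is birational-geometric: one extends $\phi$ to an element of $\Bir(\PP^2, B_{\PP^2})$ and factors it through elementary links between Hirzebruch surfaces and $\PP^2$, each link contributing a factor from $\Aff$ or $\JJ$.

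Your proposal instead follows the classical algebraic route---degree reduction via the leading-form lemma---which is essentially van der Kulk's original argument and the one presented in \cite[Chapter~5]{Essen}. The reduction scheme is correct (minor terminological quibble: you say ``precompose'' but your formulas $\sigma\circ\phi$ and $\tau\circ\phi$ are left compositions). In the leading-form paragraph there are two small index swaps: the degree-compatible relation is $\bar f^{\,m}=c\,\bar g^{\,n}$ rather than $\bar f^{\,n}=c\,\bar g^{\,m}$, and the exponent you must force to $1$ is the one on $\bar g$ (so that $\bar g$ is, up to scalar, the common form $H$), which is what yields $m\mid n$. You rightly flag that this divisibility is the crux and does not follow from $J(\bar f,\bar g)=0$ alone; it is precisely where the hypothesis $\kk[f,g]=\kk[x,y]$ enters, and you leave it unproved. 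So as written your argument is a correct outline with an explicitly acknowledged gap at the known hard step. Compared with the paper's cited approach, your route is elementary and self-contained once the divisibility lemma is supplied, while the theory-of-links approach trades that algebraic core for the birational geometry of rational surfaces but delivers the amalgamated-product structure more directly.
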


As in Blanc-Stampfli's Theorem, the theory of links provides swift proof of the above theorem \cite[Corollary 2.13]{blanc-stampfli}. Upon computation of the degrees, it becomes evident that there is no nontrivial relation between $\Aff $ and $\JJ $. Hence, $\Aut(\AAA^2)= \Aff *_{\Aff \cap \JJ} \JJ $.

\begin{lemmap}
\label{lemma:len-deg}
\cite[Lemma 5.1.2]{Essen}
Suppose $\phi=\beta_l \alpha_l \cdots \beta_1 \alpha_1 \in \Aut(\AAA^2)$ is such that $\beta_i\in \JJ \setminus \Aff$ for $1\leq i \leq l $, $\alpha_i \in \Aff \setminus \JJ$ for $2\leq i \leq l $ and $\alpha_1 \in \Aff $. Then $\bideg{\phi}= (\prod_{i=1}^l \deg{\beta_i}, \prod_{i=1}^{l-1} \deg{\beta_i}  ) $. 
\end{lemmap}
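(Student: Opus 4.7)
The plan is induction on $l$, tracking both the total degree and the top homogeneous part of each coordinate as $\phi$ is built up from the right.

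For the base case $l=1$, I would write $\beta_1=(ax+P(y),by+c)$ with $\deg P=\deg\beta_1\geq 2$ and $\alpha_1=(L_1,L_2)$ where $L_1,L_2$ are affine linear. Then $\beta_1\alpha_1=(aL_1+P(L_2),\,bL_2+c)$; the second coordinate has degree $1$ because invertibility of $\alpha_1$ forces the linear part of $L_2$ to be nonzero, and the first coordinate has degree $\deg\beta_1$ because $P(L_2)$ dominates the linear term $aL_1$, provided the top homogeneous part of $L_2$ raised to the power $\deg\beta_1$ does not vanish---this follows from $\kk[x,y]$ being an integral domain.

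For the inductive step, I would let $\phi'=\beta_{l-1}\alpha_{l-1}\cdots\beta_1\alpha_1=(f',g')$ and apply the induction hypothesis to get $\deg f'=\prod_{i=1}^{l-1}\deg\beta_i$ and $\deg g'=\prod_{i=1}^{l-2}\deg\beta_i$. Since each $\deg\beta_i\geq 2$, this yields the crucial strict inequality $\deg f'>\deg g'$. Setting $(F,G)=\alpha_l\phi'$, the key observation is that $\alpha_l\in\Aff\setminus\JJ$ forces its $(2,1)$-entry $a_{21}^{(l)}$ to be nonzero, so $G=a_{21}^{(l)}f'+a_{22}^{(l)}g'+a_{23}^{(l)}$; the strict inequality of degrees then makes the top homogeneous part of $f'$ survive inside $G$, giving $\deg G=\deg f'$ together with an explicit nonzero top homogeneous part. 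The degree of $F$ is only bounded above by $\deg f'$, which is enough.

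Finally $\phi=\beta_l(F,G)=(a_lF+P_l(G),\,b_lG+c_l)$: the second coordinate has degree $\deg G=\prod_{i=1}^{l-1}\deg\beta_i$, and the first coordinate has degree $\deg P_l\cdot \deg G=\prod_{i=1}^l\deg\beta_i$, since raising the nonzero top homogeneous part of $G$ to the power $\deg\beta_l$ remains nonzero by the integral domain property, while $a_lF$ has degree at most $\deg f'<\deg\beta_l\cdot\deg f'$ and so cannot compete. The main technical obstacle throughout is precisely this non-vanishing of leading homogeneous parts after substitution; the engine driving it is the hypothesis $\alpha_i\in\Aff\setminus\JJ$ for $i\geq 2$, which mixes the variables enough so that the top part of $f'$ is transferred into the second coordinate $G$ before being raised to a power by $P_l$. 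Once this is tracked at each stage, the rest is just multiplying degrees.
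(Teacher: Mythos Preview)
Your proof is correct and follows essentially the same approach as the paper's: induction on $l$, using that $\alpha_l\in\Aff\setminus\JJ$ has nonzero lower-left entry so that the large-degree component $f'$ gets pushed into the second coordinate before $\beta_l$ acts. The paper's proof is terser and leaves implicit the non-cancellation of top homogeneous parts that you track explicitly, but the structure is identical.
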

\begin{proof}
Case when $l=1$ is obvious. Proceed by induction and assume the assertion is true for $l-1 $, hence that $\bideg{(\beta_{l-1}\alpha_{l-1}\cdots \alpha_1 \beta_1)}= (\prod_{i=1}^{l-1}\deg{\beta_i}, \prod_{i=1}^{l-2} \deg{\beta_i}) $. Since $\alpha_{l}\in \Aff\setminus \JJ $ is not upper triangular, the $y$-component of $\alpha_{l}\beta_{l-1}\alpha_{l-1}\cdots \alpha_1\beta_1 $ has degree $\prod_{i=1}^{l-1}\deg{\beta_i} $ and the $x$-component has degree $\leq \prod_{i=1}^{l-1} \deg{\beta_i} $. Thus the bidegree of $\phi=\beta_l \alpha_l \cdots \beta_1 \alpha_1 $ is $\bideg{\phi}= (\prod_{i=1}^l \deg{\beta_i}, \prod_{i=1}^{l-1} \deg{\beta_i}) $. 
\end{proof}

\begin{coro}
\label{coro:aut-amalgam}
\cite[Corollary 5.1.6]{Essen}
Let $\phi=(\phi_1, \phi_2) \in \Aut(\AAA^2) $ be such that $\bideg{\phi}= (d_1, d_2) $. 
\begin{enumerate}[label=(\roman*)]
  \item $\Aut(\AAA^2)$ is amalgamated product of $\Aff $ and $\JJ $ along their intersection and the intersection is the group of upper triangular affine linear transformations.
  \item $d_1 \mid d_2 $ or $d_2 \mid d_1 $. 
  \item The first letter in reduced expression of $\phi $ is in $\JJ $ if and only if $d_1 > d_2 $
  \item The first letter in reduced expression of $\phi $ is in $\Aff $ if and only if $d_1 \leq d_2 $
\end{enumerate}
(If $\phi=g_1\cdots g_r $ is a reduced expression, then the first letter of $\phi $ is $g_1 $.)
\end{coro}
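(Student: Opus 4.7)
The plan is to deduce all four parts from Lemma~\ref{lemma:len-deg} and the Jung--van der Kulk Theorem. First I would verify the intersection: an element $(ax+P(y), by+c) \in \JJ$ lies in $\Aff$ precisely when $\deg P(y)\leq 1$, so $\Aff\cap \JJ$ is exactly the upper triangular affine group.

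For (i), the Jung--van der Kulk Theorem gives surjectivity of the canonical map $\Aff *_{\Aff \cap \JJ} \JJ \to \Aut(\AAA^2)$. For injectivity, I would show that no nontrivial reduced word $\phi = g_1 \cdots g_r$ in alternating factors of this amalgamated product maps to the identity in $\Aut(\AAA^2)$. A length-$1$ reduced word is either an element of $\Aff\setminus \JJ$ (not the identity) or an element of $\JJ\setminus\Aff$ (which has degree $\geq 2$, so again not the identity). For length $\geq 2$, the idea is to bring the product into the form required by Lemma~\ref{lemma:len-deg}. If $g_1\in \JJ$, the product is already in this form, possibly after appending $\alpha_1 = \id \in \Aff$ on the right when $g_r \in \JJ$. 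If $g_1 \in \Aff$, I write $\phi = g_1 \psi$ where $\psi = g_2 \cdots g_r$ starts with a $\JJ$-letter and Lemma~\ref{lemma:len-deg} applies to $\psi$. In both situations, the bidegree formula combined with the fact that every $\beta_i \in \JJ\setminus\Aff$ has $\deg \beta_i \geq 2$ forces at least one component of $\phi$ to have degree $\geq 2$, so $\phi \neq \id$.

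For parts (iii) and (iv) I would perform a case analysis on the first letter $g_1$ of the reduced expression. If $g_1\in\JJ\setminus\Aff$, then applying Lemma~\ref{lemma:len-deg} directly (with $\alpha_1=\id$ if the reduced expression ends in $\JJ$) gives
\[
  \bideg{\phi} = \Bigl(\prod_{i=1}^l \deg{\beta_i},\ \prod_{i=1}^{l-1} \deg{\beta_i}\Bigr),
\]
which yields $d_1 > d_2$ with $d_2\mid d_1$. If $g_1\in\Aff\setminus\JJ$, then $\phi = g_1 \psi$ with $\psi$ either trivial (so $\phi\in\Aff$ has bidegree $(1,1)$) or starting with a $\JJ$-letter, in which case $\bideg{\psi} = (a,b)$ with $a > b$ and $b\mid a$ by the previous case. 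Writing $g_1 = (a_{11}x + a_{12}y + a_{13},\, a_{21}x + a_{22}y + a_{23})$, the condition $g_1\notin\JJ$ is $a_{21}\neq 0$. Thus the $y$-component of $g_1\psi$ has degree exactly $a$ (the leading contribution from $\psi_1$ survives because $\deg\psi_1>\deg\psi_2$), while the $x$-component has degree $a$ if $a_{11}\neq 0$ and degree $b$ if $a_{11}=0$. In either case $\bideg{\phi} \in \{(a,a),(b,a)\}$, which gives $d_1\leq d_2$ with one of $d_1, d_2$ dividing the other. Part (ii) then follows immediately from the explicit bidegree formulas derived in (iii) and (iv).

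The main bookkeeping obstacle is the case split on how the reduced word begins and ends (in $\Aff$ versus $\JJ$), together with the sub-case of (iv) where the leading coefficient $a_{11}$ of the first $\Aff$-letter vanishes, causing the first component of $\phi$ to drop in degree from $a$ to $b$. Once these sub-cases are organized, each step is a direct consequence of Lemma~\ref{lemma:len-deg} and elementary degree bookkeeping for affine transformations applied to polynomial maps with distinct component degrees.
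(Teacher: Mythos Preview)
Your proposal is correct and follows the same approach as the paper: the paper presents this corollary without a detailed proof, noting only that ``upon computation of the degrees, it becomes evident that there is no nontrivial relation between $\Aff$ and $\JJ$,'' and citing \cite{Essen}. Your argument spells out exactly this degree computation via Lemma~\ref{lemma:len-deg}, including the careful case split on whether the leading affine coefficient $a_{11}$ vanishes, which is precisely the bookkeeping the paper leaves implicit.
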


\Cref{coro:aut-amalgam} provides an algorithm to compute a reduced expression for elements in $\Aut(\AAA^2) $. 

\begin{propp}
\label{prop:factorization-algo}
\cite[88]{Essen}
\begin{enumerate}[label=(\roman*)]
\item The set $\{(x, ax+y): a\in \kk \} \cup \{(y,x) \} $ is coset representatives of $\Aff / (\Aff \cap \JJ) $. The set $\{(x+p(y),y): \text{p(y) has no linear or constant terms} \} $ is coset representatives of $\JJ / (\Aff \cap \JJ) $.
\item There exists an algorithm to compute the normal form of an automorphism $\phi \in \Aut(\AAA^2) $ with respect to the coset representatives of (i).
\end{enumerate}
\end{propp}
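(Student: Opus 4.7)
For (i), the plan is direct case analysis. For $(\Aff \cap \JJ) \backslash \Aff$: given $\phi = (a_{11}x + a_{12}y + a_{13}, a_{21}x + a_{22}y + a_{23}) \in \Aff$, I would distinguish whether $a_{22}$ vanishes. If $a_{22} \neq 0$, I would set $a = a_{21}/a_{22}$ and $k = (x, ax+y)$, and read off the unique upper-triangular affine $u$ with $\phi = uk$ by matching coefficients. If $a_{22} = 0$ then $a_{21} \neq 0$ by invertibility of the Jacobian, and $k = (y, x)$ works analogously. These two subcases yield distinct cosets because left multiplication by an element of $\Aff \cap \JJ$ preserves both the ratio $a_{21}/a_{22}$ (when defined) and the condition $a_{22} = 0$. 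For $(\Aff \cap \JJ) \backslash \JJ$: given $\phi = (ax + P(y), by + c)$, I would split $P(y) = P_0 + P_1 y + \tilde{P}(y)$ with $\tilde{P}$ having no constant or linear term, set $u = (ax + P_1 y + P_0, by + c)$ and $k = (x + a^{-1}\tilde{P}(y), y)$, and verify $\phi = uk$ by direct substitution.

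For (ii), the plan is to iterate a Jung--van der Kulk style peel-off guided by the bidegree criterion of \Cref{coro:aut-amalgam}. Given $\phi$, I would first compute $\bideg \phi = (d_1, d_2)$. If $d_1 = d_2 = 1$, then $\phi \in \Aff$ and (i) immediately produces the normal form $u \cdot k$. Otherwise I peel off one non-identity coset representative on the left: when $d_1 > d_2$, \Cref{coro:aut-amalgam} guarantees that the leading letter lies in $\JJ$ and has degree $d_1/d_2 \geq 2$, and the classical leading-term alignment for polynomial automorphisms (a consequence of the constancy of the Jacobian determinant) yields a unique $c \in \kk^\times$ such that the degree-$d_1$ homogeneous component of $\phi_1$ equals $c \cdot \phi_2^{d_1/d_2}$; the coset representative $\beta = (x + cy^{d_1/d_2}, y)$ then satisfies $\beta^{-1}\phi = (\phi_1 - c\phi_2^{d_1/d_2}, \phi_2)$ with strictly smaller bidegree. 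When $d_1 = d_2 > 1$ the leading letter is the shear $(x, ax+y)$, with $a$ being the ratio of the leading homogeneous components of $\phi_2$ and $\phi_1$; when $d_1 < d_2$ the leading letter is the swap $(y, x)$, and after composing with it we return to the case of a strictly larger first coordinate.

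At each non-swap step the bidegree strictly decreases, so the recursion terminates. Consecutive peeled letters alternate between $\Aff$ and $\JJ$ automatically, because the case split is determined entirely by the sign of $d_1 - d_2$, so the output is a reduced expression. A final invocation of (i) on the terminal element in $\Aff \cup \JJ$ extracts the leftover factor $h \in \Aff \cap \JJ$, yielding the normal form. The main delicate input will be the leading-term alignment lemma used in the $d_1 > d_2$ case, which is classical and proved in several textbooks on polynomial automorphisms, including \cite{Essen}; granted this input, the rest of the algorithm is straightforward bookkeeping.
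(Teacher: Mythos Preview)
The paper does not supply its own proof of this proposition; it simply records the statement with a citation to \cite[p.~88]{Essen}. Your outline is the classical Jung--van der Kulk peeling argument and is correct in substance, and it is essentially what one finds in that reference.

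One small point to tighten in part (ii): when $d_1>d_2$ with $d_1/d_2\geq 2$, a single monomial peel $\beta=(x+cy^{d_1/d_2},y)$ need not drop the first degree all the way down to $d_2$ or below; several such moves may be needed before the bidegree inequality flips. Those consecutive moves all lie in $\JJ$ and compose to a single coset representative $(x+p(y),y)$ with $p$ free of linear and constant terms, so your alternation claim is valid at the level of reduced letters rather than individual monomial peels. Once that bookkeeping is made explicit, your procedure produces exactly the normal form of \Cref{prop:normal-form} with respect to the representatives in (i).
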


Following criterion is brought from \cite{furter-iteration}, \cite{Friedland-Milnor}. For the convenience of the reader, I present its proof using the formalism of cyclically reduced form. We say that a subgroup of $\Aut(\AAA^2) $ \textit{is of bounded degree} if the degree of its elements is bounded.  

\begin{propp}
\label{prop:bdd-len-criterion}
\cite[Proposition 1.5]{furter-iteration}
Let $\phi \in \Aut(\AAA^2) $. Then the following are equivalent:
\begin{enumerate}[label=(\roman*)]
  \item $\phi $ is conjugate to an element of $\Aff $ or $\JJ $. 
  \item $\langle \phi \rangle $ is subgroup of bounded length. 
  \item $\langle \phi \rangle $ is subgroup of bounded degree.
  \item $\deg{\phi^{2}}\leq \deg{\phi} $
  \item For all integer $n$, $\deg{\phi^{n}}\leq \deg{\phi} $
\end{enumerate}
\end{propp}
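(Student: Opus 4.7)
My plan is to establish the chain of implications $(v) \Rightarrow (iv)$, $(v) \Rightarrow (iii)$, $(iv) \Rightarrow (ii)$, $(iii) \Rightarrow (ii)$, $(ii) \Leftrightarrow (i)$, and $(i) \Rightarrow (v)$. The arrows $(v) \Rightarrow (iv)$ (take $n = 2$) and $(v) \Rightarrow (iii)$ are immediate from the definitions, and $(ii) \Leftrightarrow (i)$ is just Serre's theorem (\Cref{serre-tree}) applied to the cyclic subgroup $\langle \phi \rangle$, since a cyclic subgroup is conjugate into a factor exactly when its generator is.

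For $(iv) \Rightarrow (ii)$ and $(iii) \Rightarrow (ii)$ I would argue contrapositively. Assume $\phi$ is unbounded; then by \Cref{rmk:cyc-reduced}(i) its cyclically reduced form $\overline{\phi}$ satisfies $l(\overline{\phi}) \geq 2$, and by \Cref{rmk:cyc-reduced}(ii) there is a reduced expression $\phi = h \overline{\phi} h^{-1}$. Since $\overline{\phi}$ is cyclically reduced, $\overline{\phi}^n$ is again cyclically reduced of length $n \cdot l(\overline{\phi})$, so the reduced expression of $\phi^n = h \overline{\phi}^n h^{-1}$ accumulates $n$ copies of the $\JJ$-letters of $\overline{\phi}$ around the same conjugator frame. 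Every $\JJ$-letter of a reduced expression lies in $\JJ \setminus \Aff$ and hence has degree $\geq 2$, so \Cref{lemma:len-deg} yields $\deg \phi^n = \deg \phi \cdot (\deg \overline{\phi})^{n-1}$. This grows without bound in $n$, contradicting $(iii)$; specializing to $n = 2$ gives $\deg \phi^2 = \deg \phi \cdot \deg \overline{\phi} > \deg \phi$, contradicting $(iv)$.

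For $(i) \Rightarrow (v)$, I would invoke \Cref{rmk:cyc-reduced}(ii) to write $\phi = k_1 \cdots k_s \psi k_s^{-1} \cdots k_1^{-1}$ in reduced form, with $\psi$ cyclically reduced of length at most $1$, so $\psi$ lies in $\Aff$ or $\JJ$. Then $\phi^n = k_1 \cdots k_s \psi^n k_s^{-1} \cdots k_1^{-1}$, and a direct calculation establishes $\deg \psi^n \leq \deg \psi$ in each factor: for $\psi \in \Aff$ the iterates stay in $\Aff$ of degree $1$, while for $\psi = (ax + P(y), by + c) \in \JJ$ an inductive expansion shows that the $x$-component of $\psi^n$ is $a^n x + Q_n(y)$ where $Q_n$ is a $\kk$-linear combination of linear shifts of $P$, so $\deg Q_n \leq \deg P$. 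Applying \Cref{lemma:len-deg} to both reduced expressions, which differ only by replacing the central letter $\psi$ with $\psi^n$, gives $\deg \phi^n / \deg \phi = \deg \psi^n / \deg \psi \leq 1$. Negative iterates are handled symmetrically after noting $\deg \psi^{-1} = \deg \psi$ for $\psi \in \Aff \cup \JJ$, which follows from the explicit formula for the inverse of a triangular or affine automorphism.

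The hard part, I expect, is the exceptional case in $(i) \Rightarrow (v)$ when $\psi^n$ accidentally collapses into $\Aff \cap \JJ$, for instance if $\psi$ has finite order and some $\psi^n$ becomes the identity. Then the nominal expression for $\phi^n$ is no longer reduced and further cancellations with the neighboring $k_i^{\pm 1}$'s occur, altering the $\JJ$-letter count. I would handle this by verifying that any such extra cancellation can only shrink the bidegree product computed by \Cref{lemma:len-deg}, so the inequality $\deg \phi^n \leq \deg \phi$ is preserved a fortiori.
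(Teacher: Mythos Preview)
Your proposal is correct and follows essentially the same approach as the paper: both rely on Serre's theorem for $(i)\Leftrightarrow(ii)$, on the explicit reduced expression of $\phi$ via its cyclically reduced form together with \Cref{lemma:len-deg} for the contrapositive of $(iv)$ and $(iii)$, and on the conjugate form $k_1\cdots k_s\psi k_s^{-1}\cdots k_1^{-1}$ for $(i)\Rightarrow(v)$. The organization differs slightly (the paper routes $(ii)\Rightarrow(iii)$ through Serre rather than through $(v)$, and writes out the two possible reduced shapes of $\phi$ in \Cref{rmk:cyc-reduced}(ii) explicitly rather than stating the closed formula $\deg\phi^n=\deg\phi\cdot(\deg\overline{\phi})^{n-1}$), and your treatment of $\deg\psi^n\le\deg\psi$ for $\psi\in\JJ$ is more explicit than the paper's, but the substance is the same. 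One small imprecision: the expression $h\overline{\phi}h^{-1}$ is not literally reduced---one boundary letter merges, as in \Cref{rmk:cyc-reduced}(ii)---but since the same merge occurs identically in $\phi$ and in every $\phi^n$, your degree formula and the rest of the argument are unaffected.
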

\begin{proof}
The equivalence (i)$\iff $(ii) is Serre's Theorem (\Cref{serre-tree}). 

\par Note that a conjugate of a subgroup of bounded degree is of bounded degree by the computation of \Cref{lemma:len-deg}. Indeed, if $H$ is a subgroup of bounded degree, where its elements have a degree less than some $M>0 $, then the degree of elements in a conjugate $gHg^{-1} $ is less than $l(g)^2 M $.  
\\(ii)$\implies$(iii): By Serre's Theorem (\Cref{serre-tree}), that $\langle \phi \rangle$ is of bounded length implies that $\langle\phi \rangle $ is conjugate to a principal subgroup $\langle \overline{\phi}\rangle$ which is contained in $\Aff $ or $\JJ $. Such the principal subgroup is obviously of bounded degree, which implies that $\langle \phi \rangle$ is of bounded degree too. 
\\(iii)$\implies$(ii): Suppose $\langle \phi \rangle$ is not of bounded length. Then, by \Cref{sec4-unique-cyc-reduced}, $\phi $ is conjugate to a cyclically reduced form of even length. Let $\phi_1 \cdots \phi_{2k} $ ($k \geq 1 $) be a reduced expression of the cyclically reduced form of $\phi$. By the computation of \Cref{lemma:len-deg}, the degree of the elements in $\langle \phi_1 \cdots \phi_{2k} \rangle $ is unbounded. Thus, the degree of the elements in $\langle \phi \rangle$ is unbounded.

Now, we prove the implications in the order (i)$\implies$(v)$\implies$(iv)$\implies$(i).
\\ (i)$\implies $(v): Let $\phi= g_1\cdots g_l \overline{\phi} g_l^{-1} \cdots g_1^{-1} $ be a reduced expression of $\phi$ (\Cref{rmk:cyc-reduced} (ii)). Then, $\phi^{n}= g_1\cdots g_l \overline{\phi}^n g_l^{-1}\cdots g_1^{-1} $ and $l(\phi^{n})\leq l(\phi) $. Moreover, some letters around $\overline{\phi}^n $ might merge together upon taking the reduced expression. Using the computation of \Cref{lemma:len-deg}, we have $\deg{\phi^n}\leq \deg{\phi} $ for all $n\geq 1$. 
\\ (v)$\implies $(iv): This is obvious.
\\ (iv)$\implies$(i): I prove the contrapositive. Suppose $\phi $ is not conjugate with any elements in $\Aff \cup \JJ $. Write down a reduced expression of $\phi $ as in \Cref{rmk:cyc-reduced} (ii), 
\[
  \begin{gathered}
  \phi= g_1\cdots g_r \phi_1\cdots \phi_{2k-1} (\phi_{2k} g_r^{-1}) g_{r-1}^{-1}\cdots g_1^{-1} \quad \text{, or } \\
  \phi= g_1 \cdots g_{r-1} (g_r \phi_1) \phi_2 \cdots \phi_{2k} g_r^{-1} \cdots g_1^{-1} 
  \end{gathered}
\]
where $\phi_1\cdots \phi_{2k} $ is the cyclically reduced form of $\phi $. Then, the reduced expression of $\phi^2 $ is 
\[
  \begin{gathered}
  \phi^2= g_1\cdots g_r \phi_1 \cdots \phi_{2k} \phi_1 \cdots \phi_{2k-1} (\phi_{2k} g_r^{-1}) g_{r-1}^{-1} \cdots g_1^{-1} \quad \text{, or } \\
  \phi^2= g_1 \cdots g_{r-1} (g_r \phi_1) \phi_2 \cdots \phi_{2k} \phi_1\cdots \phi_{2k} g_r^{-1} \cdots g_1^{-1} 
  \end{gathered}
\]
Compare the degree of $\phi^2 $ with that of $\phi$ using \Cref{lemma:len-deg}. Observe that all letters from $\JJ $ in the reduced expression of $\phi$ are also present in $\phi^2 $. However, the letters in the cyclically reduced form appear as duplicate in the reduced expression of $\phi^2 $. Thus, $\deg{\phi^2}> \deg{\phi} $. 
\end{proof}

By above proposition, we can simply say that an element $\phi \in \Aut(\AAA^2) $ is \textit{bounded} without distinguishing whether it refers to the degree or length; it means that the subgroup $\langle \phi \rangle$ is bounded both in degree and length.

Now, we combine the conjugacy criterions \Cref{main-criterion} and \Cref{prop:bdd-len-criterion} with the factorization algorithm \Cref{prop:factorization-algo} to provide explicit solution to the conjugacy problem in $\Aut(\AAA^2) $. 

\begin{theop}
\label{theo:conj-criterion-aut}
Let $H\leq \Aut(\AAA^2) $ be a finitely generated subgroup. Then, there exists the algorithm to decide whether $H $ is conjugate into $\Aff $ or $\JJ $ and compute the conjugator if so. 
\end{theop}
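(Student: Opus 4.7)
The plan is to combine three ingredients assembled in the preceding sections: the amalgamated product structure $\Aut(\AAA^2) = \Aff *_{\Aff \cap \JJ} \JJ$ (\Cref{coro:aut-amalgam}), the combinatorial conjugacy criterion \Cref{main-criterion} for finitely generated subgroups of an amalgamated product, and the degree-theoretic characterization of boundedness in \Cref{prop:bdd-len-criterion}, which reduces the boundedness of $\langle\phi\rangle$ to the single inequality $\deg(\phi^2) \leq \deg(\phi)$.

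First I would run the factorization algorithm \Cref{prop:factorization-algo} on each generator $g_i$ to obtain its reduced expression and its length $l(g_i)$; reorder so that $g_1$ realizes the maximum. Next, for every $j \neq 1$, I compose $g_1$ with $g_j$ and again apply \Cref{prop:factorization-algo} to compute the reduced expression of $g_1 g_j$. For each element in the finite list $g_1, \ldots, g_n, g_1 g_2, \ldots, g_1 g_n$ I then test boundedness by comparing $\deg(\phi^2)$ with $\deg(\phi)$, which by \Cref{prop:bdd-len-criterion} is a complete test. If any one of these elements fails, part (i) of \Cref{main-criterion} tells us $H$ is not conjugate into either factor, and the algorithm returns a negative answer.

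If all tests pass, \Cref{main-criterion} (i) guarantees that $H$ is conjugate into $\Aff$ or $\JJ$, and it remains to exhibit the conjugator. If $l(g_1) \leq 1$, then every generator already has length $\leq 1$; the boundedness of each product $g_1 g_j$ forces all non-trivial generators to lie in the same factor (otherwise some $g_1 g_j$ would be a reduced expression of length $2$, hence cyclically reduced and unbounded by \Cref{rmk:cyc-reduced}(i)). The conjugator is then the identity, and one checks which factor by applying \Cref{prop:factorization-algo} to each generator. If $l(g_1) > 1$, part (ii) of \Cref{main-criterion} guarantees that $l(g_1) = 2s+1$ is odd; take the reduced expression $g_1 = k_1 \cdots k_{2s+1}$ already computed by \Cref{prop:factorization-algo}, and set the conjugator $c := k_1 \cdots k_s$. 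Then $c^{-1} H c$ is contained in $\Aff$ or $\JJ$, and one last application of \Cref{prop:factorization-algo} to the conjugated generators $c^{-1} g_j c$ identifies which factor.

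The only non-trivial effectivity issue is the boundedness test, and this is precisely where \Cref{prop:bdd-len-criterion} pays off: the equivalence (i)$\iff$(iv) converts an a priori infinite condition on the cyclic subgroup into a single polynomial computation on $\phi$. Everything else is elementary manipulation in the amalgamated product, made algorithmic by the normal-form procedure of \Cref{prop:factorization-algo} and the explicit coset representatives for $\Aff$ and $\JJ$. The hardest conceptual step was already accomplished in \Cref{main-criterion}, namely that one need only inspect the generators themselves together with the $n-1$ products $g_1 g_j$ rather than an unbounded family of words; granted that, the algorithm is immediate.
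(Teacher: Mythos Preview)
Your proposal is correct and follows essentially the same approach as the paper's proof: both run the factorization algorithm of \Cref{prop:factorization-algo} on the generators, reindex so that $g_1$ has maximal length, test boundedness of the $g_i$ and the products $g_1 g_j$, and then invoke \Cref{main-criterion} to decide and, in the positive case, read off the conjugator $k_1\cdots k_s$ from the reduced expression of $g_1$. You are slightly more explicit than the paper in one respect---you spell out that the boundedness test is made effective via the degree inequality $\deg(\phi^2)\leq\deg(\phi)$ from \Cref{prop:bdd-len-criterion}(iv), which the paper leaves implicit---but this is the intended mechanism and not a genuine difference in route.
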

\begin{proof}
We describe the algorithm to do this. Let $H=\langle g_1,\cdots, g_n \rangle \leq \Aut(\AAA^2) $ be given. If $l(g_i)\leq 1$ for all $i=1,\cdots n $, then the answer to the conjugacy problem is immediate; $H$ is conjugate into one of the factors if and only if all the generators belong to the same factor, and the conjugator is the identity.

Suppose at least one of the generators is not of length $\leq 1 $. Compute reduced expressions and the lengths of the generators $g_1, \cdots, g_n $ using \Cref{prop:factorization-algo}. After reindexing, suppose $g_{1} $ has maximal length among the generators. Check whether the elements $g_1,\cdots, g_n $ and the products $g_{1} g_2$ $g_1 g_3, \cdots, g_{1} g_n $ are bounded elements. If one of them is not bounded element, then, by \Cref{main-criterion}, $H$ is not conjugate into one of the factors and the algorithm terminates.

\par Suppose the elements $g_1,\cdots, g_n $ and the products $g_{1} g_2$, $g_1, g_3 \cdots, g_{1} g_n $ are bounded. Then, by \Cref{main-criterion}, $H$ is conjugate into one of the factors. If $g_{1}=k_1\cdots k_{2s+1} $ is a reduced expression of $g_{1} $, then the conjugator is $(k_1\cdots k_s)^{-1} $ by \Cref{main-criterion}. 
\end{proof}

\section{Computation of the Curves Invariant by $\phi \in \JJ $}
\label{section:invar-curves}

The computations of irreducible invariant curves that can occur was already demonstrated in \cite[Section 4]{blanc-stampfli}. In this section, we compute the totality of the invariant subvarieties (not necessarily irreducible) in $\AAA^2 $ which are invariant under a given $\phi \in \JJ $, and our demonstration is aimed towards developing the orbit Zariski closure computation algorithm. 

To compute the orbit Zariski closure with respect to a finitely generated group, we need means to derive the invariant of several maps from the information of the invariant of each map. It is done by obtaining the totality of the invariant subvarieties for given $\phi \in \JJ $, then interwining the maps using the equivariance criterion (\Cref{lemma:equivar-affine}, \Cref{lemma:equivar-projective}).

The byproduct of the computations is the case-by-case proof of the fact that all torsion elements in $\Aut(\AAA^2) $ are affine linear transform after a coordinate change. In particular, we precisely identify which elements are the torsion elements in $\Aut(\AAA^2) $.

\par In this section, we assume that the base field $\kk$ is algebraically closed and characteristic 0. While our focus in this paper remains on characteristic 0, I am sure that the argument could be readily extended to positive characteristic with little effort.

For the clarity of the presentation, we introduce the concept of \textit{invariant subvariety lattice}. In this paper, \textit{a variety} may not be an irreducible scheme. Check the notations and conventions in the introduction (\Cref{Section:Introduction}). 

\begin{definition}
  \label{def:invar-subvar-lattice}
  Let $G$ be a group acting algebraically on a variety $X$. The \textit{invariant subvariety lattice $\mathfrak{I}$} of $X$ by $G$ is 
  \[
    \begin{gathered}
    \frakI_{G}:= \{ Y\subset X: \text{$Y$ is $G$-invariant closed subvariety of $X$} \} \, , \\
    \frakI_{G,x}:= \{Y\subset X: \text{$Y$ is $G$-invariant closed subvariety of $X$ and $x\in Y$} \} \, , \\ 
    \frakI_{G, Z}:= \{Y \subset X: \text{$Y $ is $G$-invariant closed subvariety of $X$ and $ Z\subset Y$} \} \, . 
    \end{gathered}
  \]
  For a lattice $\frakI $ of subvarieties in $X$, the \textit{support of the lattice $\frakI $} is 
  \[
    \Supp{\frakI}:= \bigcup_{\substack{Y\in \frakI_{G,x} \\ Y\neq X }} Y \, . 
  \]

  For a lattice $\mathfrak{I}$ of subvarieties of $X$, if a subset $\mathcal{I} \subset \mathfrak{I}$ is such that every element in $\mathfrak{I}$, except $\AAA^2 $, is a union of elements in $\mathcal{I}$, then we say that \textit{$\mathcal{I}$ generates $\mathfrak{I}$}. 
\end{definition}

\begin{rmk}
\label{rmk:invar-subvar-lattice}
\begin{enumerate}[label=(\roman*)]
  \item Thus defined $\frakI_{G,x} $ is indeed a lattice since $\frakI_{G,x} $ is closed under finite intersections and finite unions. The least element of $\frakI_{G,x}$ is the orbit Zariski closure of $x$ by $G$. The greatest element of $\frakI_{G,x} $ is the whole $X$.
  
  \par The invariant subvariety lattice $\frakI_{G} $ is also a lattice with the least element as the empty set $\emptyset $ and the largest element $X $. 
  \item The 0-dimensional elements in the $\frakI_{G} $ are precisely the torsion points of $G$.
  \item If $G=\langle g_1,\cdots, g_r \rangle $, then $\frakI_{G,x}$ is the intersection of lattices:
  \[
    \frakI_{G,x}= \bigcap_{i=1}^r \frakI_{g_i,x} \, . 
  \]
\end{enumerate}
\end{rmk}

\par The goal of this section is to compute $\frakI_{\phi} $ when $\phi\in \JJ$. We do so by explicitly describing the set of generators to the lattice $\frakI_{\phi} $, which consists of fibers to a $\phi $-equivariant map, and a curve transversal to the fibers. The computations of the invariant subvariety lattices $\frakI_{\phi} $ is then used in \Cref{section:computation-orbit-closure} to compute the orbit Zariski closures for finitely generated groups in $\Aut(\AAA^2) $. 

Before going into the computations, we first clarify what it means to be equivariant. To deal with several automorphisms and their invariant subvariety lattice, we need the following lemmas about the equivariance. The lemma says that the global equivariance is equivalent to equivariance on just one point, and that the set-theoretic equivariance is equivalent to algebraic equivariance. 

\begin{lemmap}
\label{lemma:equivar-affine}
Let $\pi \in \kk[x,y] $ be a nonconstant polynomial such that general fibers are irreducible and reduced thought as a morphism $\pi:\AAA^2 \rightarrow \AAA^1 $. Let $\phi \in \Aut(\AAA^2) $ be an automorphism. Then, the following are equivalent (the fibers are scheme-theoretic):
\begin{enumerate}[label=(\roman*)]
  \item There exists an automorphism $\overline{\phi}\in \Aut(\AAA^1) $ such that $\pi\circ \phi = \overline{\phi}\circ \pi $. 
  \item $\phi^*\pi \in \kk[\pi] $ is a degree 1 polynomial in $\pi $. 
  \item For all $p\in \AAA^2 $, it holds $\phi(\pi^{-1}(\pi(p))) = \pi^{-1}(\pi(\phi(p))) $.
  \item There exists $p\in \AAA^2 $ such  that $\pi^{-1}(\pi(p))$ is irreducible and reduced, and $\phi(\pi^{-1}(\pi(p)))= \pi^{-1}(\pi(\phi(p))) $.
\end{enumerate}
\end{lemmap}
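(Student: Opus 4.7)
The plan is to traverse the cycle (i)$\Leftrightarrow$(ii)$\Rightarrow$(iii)$\Rightarrow$(iv)$\Rightarrow$(ii), with the final implication carrying the main content.

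First I would dispatch (i)$\Leftrightarrow$(ii) as a direct algebraic translation. Since $\Aut(\AAA^{1})$ consists of the affine maps $t\mapsto at+b$ with $a\in \kk^{\times}$, the relation $\pi\circ \phi = \overline{\phi}\circ \pi$ pulled back to coordinate rings reads $\phi^{*}\pi = a\pi + b$, which is precisely the degree-$1$ condition in (ii); conversely, any such $\phi^{*}\pi = a\pi+b$ produces the required $\overline{\phi}\in \Aut(\AAA^{1})$. Next, (i)$\Rightarrow$(iii) is immediate from the commutative square, since $\phi$ then sends the fiber over $\pi(p)$ bijectively to the fiber over $\overline{\phi}(\pi(p)) = \pi(\phi(p))$, and the equality as subschemes follows from pulling back the principal ideal that cuts out the target fiber. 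Finally, (iii)$\Rightarrow$(iv) follows at once from the hypothesis that the general fiber of $\pi$ is irreducible and reduced: such a $p$ exists, and (iii) applies to it.

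The heart of the argument is (iv)$\Rightarrow$(ii). Set $c = \pi(p)$ and $c' = \pi(\phi(p))$. Because $\pi^{-1}(c)$ is irreducible and reduced, its defining ideal is precisely the prime ideal $(\pi - c) \subset \kk[x,y]$. The scheme-theoretic preimage $\phi^{-1}(\pi^{-1}(c'))$ is cut out by the ideal $(\phi^{*}\pi - c')$, and the hypothesis $\phi(\pi^{-1}(c)) = \pi^{-1}(c')$, combined with $\phi$ being an automorphism, yields the ideal equality $(\phi^{*}\pi - c') = (\pi - c)$ in $\kk[x,y]$. Since both sides are principal, their generators differ by a unit in $\kk[x,y]^{\times} = \kk^{\times}$, hence $\phi^{*}\pi - c' = \lambda(\pi - c)$ for some $\lambda \in \kk^{\times}$. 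Therefore $\phi^{*}\pi = \lambda\pi + (c' - \lambda c) \in \kk[\pi]$ is of degree exactly $1$.

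I expect the only delicate point to be the passage from the scheme-theoretic equality $\phi(\pi^{-1}(c)) = \pi^{-1}(c')$ to the ideal equality $(\phi^{*}\pi - c') = (\pi - c)$. Both irreducibility and reducedness of the fiber are genuinely used here: the former to ensure that the defining ideal is prime (and in particular principal generated by an irreducible element), and the latter to rule out a spurious nilpotent factor that would weaken the equality to a radical containment. Once the reasoning is moved into the coordinate ring, everything else is a formal matter of comparing principal ideals.
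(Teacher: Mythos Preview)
Your proposal is correct and follows essentially the same route as the paper: the cycle (i)$\Leftrightarrow$(ii)$\Rightarrow$(iii)$\Rightarrow$(iv)$\Rightarrow$(ii), with the key step (iv)$\Rightarrow$(ii) established by comparing generators of the principal ideals $(\phi^{*}\pi - c')$ and $(\pi - c)$ to extract $\phi^{*}\pi = \lambda\pi + \text{const}$. Your discussion of why both irreducibility and reducedness of the fiber are needed is slightly more explicit than the paper's, but the argument is the same.
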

\begin{proof}
(i)$\implies$(ii): The condition (i) says that $\phi^* \pi=\overline{\phi}(\pi) \in \kk[\pi] $ where $\overline{\phi} \in \Aut(\AAA^1)$ is thought as a degree 1 polynomial in one variable. 
\\ (ii) $\implies$ (i): The condition (ii) says that $\phi^* $ restricts to $\kk $-algebra automorphism of $\kk[\pi]\subset \kk[x,y] $. The $\kk$-algebra automorphism gives the automorphism of $\Spec{\kk[\pi]}\cong \AAA^1 $ which commutes as in the condition (i).
\\ (i)$\implies$ (iii) : This is set-theoretically obvious.
\\ (iii)$\implies$ (iv) : Obvious. 
\\ (iv)$\implies$ (ii): Denote as $\pi(p)=t_1, \pi(\phi(p))=t_2 \in \AAA^1 $. Then, the condition (iv) says that $\phi$ restricts to isomorphism of curves $\phi:V(\pi - t_1)\cong V(\pi - t_2) \subset \AAA^2$. Hence, $\phi^* \pi - t_1 $ is generator of the principal ideal $(\pi - t_2) \subset \kk[x,y] $, and there exists a constant $s\in \kk^{\times}$ such that $\phi^* \pi -t_1= s(\pi - t_2) $, which is the desired conclusion. 
\end{proof}

We need the version of above lemma for a rational map into projective line. In this case, we need to two equivariant points compared to the affine case.

\begin{lemmap}
\label{lemma:equivar-projective}
Let $\pi\in \kk(x,y)$ be a rational function such that general fibers are irreducible and reduced thought as a rational map $\pi:\AAA^2 \dashrightarrow \PP^1 $. Let $\phi\in \Aut(\AAA^2) $ be an automorphism such that the domain of definition $\dom(\pi)$ is $\phi$-invariant. Then, the following are equivalent ($\pi^{-1}(p)$ is the strict transform of the rational map):
\begin{enumerate}[label=(\roman*)]
  \item There exists a projective automorphism $\overline{\phi} \in \Aut(\AAA^2)$ such that $\pi\circ \phi= \overline{\phi}\circ \pi $. 
  \item $\phi^*\pi \in \kk(\pi) $ is a linear fractional transform of $\pi $.
  \item For all $p\in \dom(\pi)$, it holds $\phi(\pi^{-1}(\pi(p))) = \pi^{-1}(\pi(\phi(p)))$.
  \item There exists $p$, $q\in \dom(\pi) $ with $\pi(p)\neq \pi(q) $ such that $\pi^{-1}(\pi(p)) $ and $\pi^{-1}(\pi(q)) $ are irreducible and reduced, $\phi(\pi^{-1}(\pi(p)))= \pi^{-1}(\pi(\phi(p))) $, and $\phi(\pi^{-1}(\pi(q)))= \pi^{-1}(\pi(\phi(q))) $
\end{enumerate}
\end{lemmap}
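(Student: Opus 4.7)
The plan is to parallel the proof of the affine analogue \Cref{lemma:equivar-affine}. The implications (i)$\iff$(ii), (i)$\implies$(iii), and (iii)$\implies$(iv) transfer with only superficial changes: a M\"obius transformation $\overline{\phi}(z) = (az+b)/(cz+d)$ corresponds to $\phi^*\pi = (a\pi + b)/(c\pi + d)$, and the set-theoretic bookkeeping for (i)$\implies$(iii) works as before once one invokes the hypothesis that $\dom(\pi)$ is $\phi$-invariant. The substantive work is all in (iv)$\implies$(ii), and the reason we need two equivariant fibers rather than one is that a M\"obius transformation on $\PP^1$ is determined by its action on two generic points, whereas an affine automorphism of $\AAA^1$ is (up to translation) already pinned down by one.

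For (iv)$\implies$(ii), I would first write $\pi = f/g$ with $f, g \in \kk[x,y]$ coprime, so that the strict transform of the fiber over $t \in \AAA^1 \subset \PP^1$ is $V(f - tg)$ and the fiber over $\infty$ is $V(g)$. Setting $t_i = \pi(p_i)$ and $s_i = \pi(\phi(p_i))$ with $p_1 = p,\, p_2 = q$, and assuming all four values are finite, the equivariance conditions $\phi(\pi^{-1}(t_i)) = \pi^{-1}(s_i)$ translate (via $\phi^{-1}(V(h)) = V(\phi^* h)$ and the fact that both fibers are irreducible and reduced) to
\[
\phi^*(f) - s_i\, \phi^*(g) \;=\; \lambda_i\,(f - t_i\, g), \qquad i = 1, 2,
\]
for some scalars $\lambda_i \in \kk^\times$. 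Because $s_1 \neq s_2$, this linear system solves uniquely for $\phi^*(f)$ and $\phi^*(g)$ as $\kk$-linear combinations of $f$ and $g$, giving $\phi^*(f) = Af + Bg$ and $\phi^*(g) = Cf + Dg$. Non-degeneracy $AD - BC \neq 0$ follows because $\phi^*(f)$ and $\phi^*(g)$ must again be coprime (since $\phi^*$ is a $\kk$-algebra automorphism and $f,g$ are coprime). Taking quotients yields $\phi^*\pi = (A\pi + B)/(C\pi + D)$, which is exactly (ii).

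The main obstacle I anticipate is the bookkeeping around the point $\infty \in \PP^1$: when one of the values $t_i$ or $s_i$ equals $\infty$, the corresponding fiber is $V(g)$ rather than $V(f - tg)$ and the displayed linear equation degenerates. The cleanest workaround is to pre- or post-compose $\pi$ with a M\"obius transformation $\tau$ that moves the offending values to finite points, deduce (ii) for $\tau \circ \pi$ by the argument above, and then transport the conclusion back, using the fact that both (i) and (ii) are manifestly preserved under composing $\pi$ with a M\"obius transformation on the target. A secondary subtlety is to confirm that the strict-transform fibers behave correctly under $\phi$, but this reduces to the global ideal-theoretic identity $\phi^{-1}(V(h)) = V(\phi^* h)$ on $\AAA^2$ together with the $\phi$-invariance of $\dom(\pi)$, and causes no difficulty.
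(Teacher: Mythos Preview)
Your proposal is correct and follows essentially the same route as the paper's proof. The only cosmetic difference is that the paper writes the fiber values in homogeneous coordinates $[t_1:t_2],[t_3:t_4],[s_1:s_2],[s_3:s_4]\in\PP^1$ from the outset, so the ideal equalities $(t_2 f + t_1 g)=(s_2\,\phi^*f + s_1\,\phi^*g)$ and $(t_4 f + t_3 g)=(s_4\,\phi^*f + s_3\,\phi^*g)$ already cover the case $\infty$ uniformly and yield $\kk\langle f,g\rangle=\kk\langle \phi^*f,\phi^*g\rangle$ directly, making your M\"obius-transformation workaround unnecessary.
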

\begin{proof}
The equivalence of (i) and (ii) is the same as in \Cref{lemma:equivar-affine}. 
\par The implications (i)$\implies$(iii)$\implies$(iv) are obvious. Let us prove the implication (iv)$\implies $(ii). 

\par Denote as $\pi=\frac{f}{g}\in \kk(x,y) $ where $f, g\in \kk[x,y]$ are relative prime polynomials, which is identified with the rational map into projective line $[f:g]:\AAA^2 \dashrightarrow \PP^1 $. Denote as $\pi(p)=[t_1:t_2]$ and $\pi(q)=[t_3:t_4]\in \PP^1$. Denote as $\pi(\phi(p))=[s_1:s_2]$ and $\pi(\phi(q))=[s_3:s_4]\in \PP^1 $. The condition that $\phi(\pi^{-1}(\pi(p)))= \pi^{-1}(\pi(\phi(p))) $, and $\phi(\pi^{-1}(\pi(q)))= \pi^{-1}(\pi(\phi(q))) $ is stated in terms of ideals in $\kk[x,y] $ as 
\[
  (t_2 f + t_1 g) = (s_2 \phi^* f+ s_1 \phi^* g) \, , \quad (t_4 f+ t_3 g) = (s_4 \phi^*f + s_3 \phi^* g ) \, . 
\]
Since $[t_1:t_2]\neq [t_3:t_4] $ by the assumption, above equalities of principal ideals imply the equality of the vector spaces $\kk\langle f,g \rangle= \kk\langle \phi^*f, \phi^* g\rangle $. Hence, there exists scalars $a_1, a_2, a_3, a_4\in \kk $ such that 
\[
\phi^*f = a_1 f + a_2 g \, , \quad \phi^*g= a_3 f+ a_4 g \, . 
\]
Thus, we arrive at the desired conclusion as 
\[
\phi^*\pi = \frac{a_1\pi +a_2 }{a_3 \pi + a_4 } \in \kk(\pi) \, .
\]

\end{proof}

If $G \subset \Aut(\AAA^2) $ and $f\in \kk(x,y) $ satisfy the conditions of \Cref{lemma:equivar-affine} or \Cref{lemma:equivar-projective}, then we will say that $f$ is \textit{$G$-equivariant}.

\begin{notation}
\label{Notation:dual-rep-of-jonquieres}
Let $\JJ_n $ act on the affine plane $\AAA^2=\Spec{\kk[x,y]} $ as the polynomial map on the coordinates. Hence, we have the dual representation of $\JJ_n $ on $\kk[x,y]$. Throughout the paper, let us fix and clarify the direction from which $\JJ_n $ acts on $\kk[x,y]$ as dual representation. The de Jonqui\`eres group $\JJ_n $ acts on $\kk[x,y] $ from the \textit{right}, i.e. 
\begin{equation*}
  (f.\phi)(x)= f(\phi(x)), \,\,\text{or equivalently } f.\phi= \phi^* f \,\, \text{ for } f\in \kk[x,y],\, \phi\in \JJ_n \, .
\end{equation*}
  
\par Throughout the paper, we fix the notation for $\JJ_n $-invariant subspace 
\[
  V_n:= \kk \langle 1, y, y^2, \cdots, y^n \rangle \subset \kk[x,y] \, , 
\]
where the angular brackets mean the vector space generators. 

\par Thus, $\phi^* $ is nonsingular endomorphism of the vector space $V_n $. In the matrix notation, the action of $\phi=(ax + a_n y^n + a_{n-1}y^{n-1}+\cdots + a_0, by+c )\in \JJ_n $ on $f=d'x+ d_ny^n + \cdots + d_0 \in V_n $ is written as:
\begin{equation}
  \label{eq:matrix-rep-of-jonquieres}
  \phi^*f=  \begin{pmatrix}
      1 & y & y^2 & \cdots & y^n & x
    \end{pmatrix}
    \begin{pmatrix}
      1 & c & c^2 & \cdots & c^n & a_0 \\
       & b & 2bc &  & nbc^{n-1} & a_1 \\
       &  & b^2 &  & \vdots &   \\
       &  &  & \ddots & nb^{n-1}c & a_{n-1} \\
       & \text{\huge0} &  &  & b^n & a_n \\
       &  &  &  &   & a
      \end{pmatrix}
      \begin{pmatrix}
        d_0 \\
        d_1 \\
        \vdots \\
        d_n \\
        d'
        \end{pmatrix}  \, . 
\end{equation}
Thus, the matrix representation defines an \textit{anti-homomorphism} from $\JJ_n $ to $\GL_{n+2} $ (because $\JJ$ acts on $V_n $ from right). Throughout the paper, the \textit{matrix form of $\phi \in \JJ_n $} will be as above. 
\end{notation}

\begin{rmk}
Although we are computing only for $\phi \in \JJ_n $, this eventually computes the invariant curves for $\Aff $ too. This is because every element in $\Aff $ can be conjugated to an element of $\JJ_1= \JJ\cap \Aff $. Namely, an element $\psi=(a_1 x+b_1 y+c_1, a_2 x+ b_2 y+ c_2)\in \Aff $ has matrix representation 
\[
  \psi^*f= \begin{pmatrix}
    1 & y & x
    \end{pmatrix}
    \begin{pmatrix}
      1 & c_2 & c_1 \\
      0 & b_2 & b_1 \\
      0 & a_2 & a_1
      \end{pmatrix}
      \begin{pmatrix}
        d_1 \\
        d_2 \\
        d_3
        \end{pmatrix}
\]
where $f=d_1+ d_2y+ d_3x \in \kk\langle 1, y, x\rangle $. By finding the Jordan canonical form of the lower right $2\times 2 $ matrix, we can easily conjugate $\psi $ to an element of $\JJ\cap \Aff $. 
\end{rmk}

Following is an elementary diagonalization lemma. 

\begin{lemmap}
\label{lemma:diagonalization}
Let $\phi=(ax+P(y), by+c)\in \JJ_n $. Suppose that $b\neq 1 $, or $(b,c)=(1,0) $. Then, there exists a change of coordinate $\alpha=(\tilde{x}, \tilde{y})\in \Aut(\AAA^2) $, and a polynomial $h_1(y)=\tilde{h}_1(\tilde{y}) \in \kk[y]= \kk[\tilde{y}] $ such that $\tilde{\phi}=\alpha\circ \phi \circ \alpha^{-1}$ is
\[
  \tilde{\phi}^n(\tilde{x}, \tilde{y})= (a^n \tilde{x}+ na^{n-1} \tilde{h}_1(\tilde{y}), b^n \tilde{y}) \, \text{ for all } n\in \ZZ \, . 
\]
Let $P(y)=\tilde{P}(\tilde{y})= \sum_{i=0}^n \tilde{a}_i \tilde{y}^i $. Then, the polynomials $\tilde{x}$, $\tilde{y} $, and $h_1(y)=\tilde{h}_1(\tilde{y})\in \kk[y] $ are:
\begin{gather*}
  \tilde{y}= \begin{cases}
    y & , \text{ if } (b,c)=(1,0) \\ 
    y-\frac{c}{1-b} & , \text{ if } b\neq 1
  \end{cases} \\ 
  \tilde{x}= x- h(y), \quad \text{where } h(y)=\tilde{h}(\tilde{y}):= \sum_{\substack{i=0 \\ a \neq b^i}}^n \frac{\tilde{a}_i}{b^i-a}\tilde{y}^i \in \kk[y]= \kk[\tilde{y}] \\
  h_1(y)=\tilde{h}_1(\tilde{y}):= (\phi^*-a)\tilde{x}= \sum_{\substack{0 \leq i \leq n\\ a=b^i}} \tilde{a}_i \tilde{y}^i  \in \kk[y]= \kk[\tilde{y}] \, . 
\end{gather*}
\end{lemmap}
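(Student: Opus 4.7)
The proof is a straightforward diagonalization / cohomological-equation argument, splitting into two independent stages corresponding to the $y$-coordinate and the $x$-coordinate.

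\textbf{Stage 1 (diagonalize the $y$-action).} The map $\phi$ acts on the $y$-coordinate as the affine transformation $y\mapsto by+c$. When $b\neq 1$, this affine map has the unique fixed point $y_0=c/(1-b)$, while in the case $(b,c)=(1,0)$ it is already the identity. Setting $\tilde y = y-y_0$ (with the convention $y_0=0$ in the case $(b,c)=(1,0)$), I obtain $\tilde y\mapsto b\tilde y$ under $\phi$. After this substitution the $x$-component becomes $ax+\tilde P(\tilde y)$, where $\tilde P(\tilde y):=P(\tilde y+y_0)=\sum_{i=0}^n \tilde a_i\tilde y^i$.

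\textbf{Stage 2 (solve the cohomological equation in $x$).} Make the triangular substitution $\tilde x = x - h(y)$ where $h$ is a polynomial in $y$ to be determined. Writing everything in the new coordinates, a direct computation shows that $\tilde\phi^*\tilde x - a\tilde x$ equals
\[
\tilde P(\tilde y) - \bigl(\tilde h(b\tilde y) - a\,\tilde h(\tilde y)\bigr),
\]
where $\tilde h(\tilde y):=h(\tilde y+y_0)$. The plan is to choose $\tilde h$ so that this expression is the ``resonant remainder'' $\tilde h_1(\tilde y)$, i.e.\ to solve the twisted cohomological equation
\[
\tilde P(\tilde y) \;=\; \tilde h(b\tilde y) - a\,\tilde h(\tilde y) \;+\; \tilde h_1(\tilde y).
\]
The operator $\tilde h(\tilde y)\mapsto \tilde h(b\tilde y)-a\,\tilde h(\tilde y)$ is diagonal in the monomial basis: it sends $\tilde y^i$ to $(b^i-a)\tilde y^i$. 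Hence the equation splits monomial by monomial, giving $\tilde h=\sum_{i:\,a\neq b^i}\frac{\tilde a_i}{b^i-a}\tilde y^i$ for the non-resonant terms, and $\tilde h_1=\sum_{i:\,a=b^i}\tilde a_i\tilde y^i$ for the resonant ones. These are exactly the formulas in the statement, and $\alpha=(\tilde x,\tilde y)$ is a (triangular, hence invertible) automorphism of $\AAA^2$.

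\textbf{Stage 3 (iterate).} In the new coordinates, $\tilde\phi$ acts by $(\tilde x,\tilde y)\mapsto (a\tilde x+\tilde h_1(\tilde y),\,b\tilde y)$. The crucial resonance property is that every monomial of $\tilde h_1$ satisfies $b^i=a$, which yields $\tilde h_1(b\tilde y)=a\,\tilde h_1(\tilde y)$. Using this identity I can induct on $n\geq 0$: if $\tilde\phi^n(\tilde x,\tilde y)=(a^n\tilde x+na^{n-1}\tilde h_1(\tilde y),\,b^n\tilde y)$, applying $\tilde\phi$ once more gives
\[
\tilde\phi^{n+1}(\tilde x,\tilde y)=\bigl(a^{n+1}\tilde x + na^{n}\tilde h_1(\tilde y) + \tilde h_1(b^n\tilde y),\,b^{n+1}\tilde y\bigr),
\]
and repeated application of the resonance identity gives $\tilde h_1(b^n\tilde y)=a^n\tilde h_1(\tilde y)$, closing the induction. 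The same formula for negative $n$ is obtained either by a symmetric induction starting from the explicit inverse $\tilde\phi^{-1}(\tilde x,\tilde y)=(a^{-1}\tilde x-a^{-2}\tilde h_1(\tilde y),\,b^{-1}\tilde y)$, or simply by noting that the formula defines a one-parameter subgroup of $\Aut(\AAA^2)$ and checking that it agrees with $\tilde\phi$ at $n=1$.

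The only subtle point is bookkeeping between the two descriptions of the same polynomial, $h(y)$ in the original $y$-variable versus $\tilde h(\tilde y)$ in the shifted one; but since $h$ enters only via the substitution $\tilde x=x-h(y)$ and the condition on $\tilde\phi$ is checked after passing to $(\tilde x,\tilde y)$, no genuine obstacle arises. Everything is linear algebra plus a one-line induction.
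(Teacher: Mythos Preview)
Your proof is correct and follows essentially the same approach as the paper's. The paper frames the computation as finding the Jordan canonical form of $\phi^*$ on $V_n$---verifying $\phi^*\tilde y=b\tilde y$, $\phi^*\tilde x=a\tilde x+\tilde h_1(\tilde y)$, and $(\phi^*-a)\tilde h_1=0$, then reading off the iterates---while you phrase the identical linear algebra as solving a twisted cohomological equation monomial-by-monomial and using the resonance identity $\tilde h_1(b\tilde y)=a\tilde h_1(\tilde y)$; these are two names for the same computation.
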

\begin{proof}
The lemma is the computation of the Jordan canonical form of the vector space endomorphism $\phi^*\in \End_{\kk}(V_{n}) $. The polynomials $\tilde{x} $, $\tilde{y} $, and $\tilde{h}_1(\tilde{y}) $ as in the lemma satisfy
\[
  \begin{gathered}
    \phi^* \tilde{x}= a \tilde{x}+ \tilde{h}_1(\tilde{y})\, , \\ 
    \phi^* \tilde{y}= b\tilde{y} \, , \\
    (\phi^*-a)^2 \tilde{x}= (\phi^*-a) \tilde{h}_1(\tilde{y})= 0 \, . 
  \end{gathered}
\]
Then, from above relations, it is immediate to check that 
\[
  \begin{gathered}
    (\phi^*)^n\tilde{x}= a^n \tilde{x}+ na^{n-1} \tilde{h}_1(\tilde{y})  \, , \\
    (\phi^*)^n \tilde{y}= b^n \tilde{y} \, ,
  \end{gathered}
\]
hence the lemma.
\end{proof}

The following lemma is used to compute Zariski closure of a subgroup generated by a single element $\phi \in \GL_2(\kk) $. The transcendence of exponential function is a very classical result, but I couldn't find the relevant proof in the particular case when the domain is restricted to $\ZZ $. Thus, we exhibit the elementary proof of the following.

\begin{lemmap}
\label{lemma:transcendence}
Suppose that $a,b\in \kk^{\times} $ are not roots of unity.
\begin{enumerate}[label=(\roman*)]
  \item Suppose that there exist no nonzero integers $r_1 $ and $r_2 $ such that $a^{r_1}=b^{r_2} $. Then, the subset $\{(a^n, b^n)\, \vert \, n \in \NN \} \subset \AAA^2$ is Zariski dense.  
  \item The subset $\{(n, a^n)\, \vert \, n \in \NN \} \subset \AAA^2$ is Zariski dense. 
\end{enumerate}
\end{lemmap}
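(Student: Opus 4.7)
The plan for both parts is to assume the set is not Zariski dense and derive a contradiction: suppose $f(x,y) = \sum_{i,j} c_{ij} x^i y^j \in \kk[x,y]$ is a nonzero polynomial vanishing at every point of the set in question, and deduce that every coefficient $c_{ij}$ must vanish.

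For (i), evaluating $f$ at $(a^n, b^n)$ yields $\sum_{i,j} c_{ij} (a^i b^j)^n = 0$ for every $n \in \NN$. I would first argue that under the hypotheses the scalars $u_{ij} := a^i b^j$, indexed by the finite support of $f$, are pairwise distinct. Any collision $a^{i_1} b^{j_1} = a^{i_2} b^{j_2}$ with $(i_1,j_1)\neq(i_2,j_2)$ rewrites as $a^{i_1-i_2} = b^{j_2-j_1}$; the case where both exponents are nonzero is ruled out by the multiplicative-independence hypothesis, while a single vanishing exponent would force $a$ or $b$ to be a root of unity, again contradicting the hypothesis. With distinctness in hand, the relations $\sum_k c_k u_k^n = 0$ for $n = 0, 1, \ldots, N-1$ form a linear system in the $c_k$ whose matrix is Vandermonde in the pairwise distinct $u_k$'s, hence invertible; so every $c_k$ vanishes.

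For (ii), substituting $(n, a^n)$ gives $\sum_{i,j} c_{ij} n^i (a^j)^n = 0$ for every $n \in \NN$, which I would regroup as $\sum_j p_j(n)(a^j)^n = 0$ with $p_j(t) := \sum_i c_{ij} t^i \in \kk[t]$. Since $a$ is not a root of unity, the finitely many scalars $a^j$ appearing are pairwise distinct. The desired conclusion $p_j \equiv 0$ for every $j$ (whence $c_{ij} = 0$ for all $i,j$) would then follow from the general fact: \emph{if $\alpha_1, \ldots, \alpha_N \in \kk^\times$ are pairwise distinct and $p_1, \ldots, p_N \in \kk[t]$ satisfy $\sum_j p_j(n)\alpha_j^n = 0$ for all $n \in \NN$, then every $p_j$ vanishes identically.}

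The main technical point is this auxiliary lemma, since a plain Vandermonde argument no longer applies once the ``coefficients'' depend polynomially on $n$. I would prove it by an elementary shift-operator argument: let $S$ act on $\kk$-valued sequences by $(Sh)(n) = h(n+1)$. A direct computation shows that $(S - \alpha_k \id)$ applied to $p_k(n)\alpha_k^n$ produces $\alpha_k^{n+1}(p_k(n+1) - p_k(n))$ and so strictly lowers $\deg p_k$, while applied to $p_j(n)\alpha_j^n$ with $j \neq k$ it preserves the degree of $p_j$ and merely rescales its leading coefficient by the nonzero factor $\alpha_j - \alpha_k$. Applying $\prod_{j \neq m}(S - \alpha_j \id)^{\deg p_j + 1}$ to the given relation therefore annihilates every summand except the $m$-th, which survives as a nonzero sequence of the form $q_m(n)\alpha_m^n$; this contradicts the vanishing of the sum whenever some $p_m$ is nonzero, completing both parts.
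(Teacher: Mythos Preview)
Your proof is correct in both parts. Part (i) is essentially identical to the paper's: you show the values $a^ib^j$ are pairwise distinct and invoke linear independence of the characters $n\mapsto (a^ib^j)^n$; the paper cites the Dedekind-style independence of monoid characters directly, while you spell out the Vandermonde step, but there is no real difference.

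For part (ii) the underlying mechanism is the same as the paper's but the packaging differs. The paper writes $f(x,y)=\sum_{j} b_j(x)y^j$ and runs a double induction, first on $d_2=\deg_y f$ and then on $d_1=\deg b_{d_2}$, at each step forming $a^{d_2}f(n,a^n)-f(n+1,a^{n+1})$ to drop either $d_2$ or $d_1$. This is exactly one application of your shift operator $(a^{d_2}\id - S)$, so the paper's induction amounts to peeling off one factor of the product $\prod_{j\neq m}(S-\alpha_j\id)^{\deg p_j+1}$ at a time. Your formulation isolates the general lemma---that $\sum_j p_j(n)\alpha_j^n\equiv 0$ for distinct $\alpha_j\in\kk^\times$ forces all $p_j=0$---and proves it in one stroke by applying the full annihilating operator, which is a bit cleaner and more reusable than the paper's explicit double induction, though the arguments are equivalent in substance.
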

If $\kk=\CC $, then the condition of (i) is simply stated that $\log{a} $ and $\log{b} $ are linearly independent over $\QQ $.  

\begin{proof}
\begin{enumerate}[label=(\roman*)]
\item Suppose there exists a polynomial $f(x,y) \in \kk[x,y]$ such that $f(a^n, b^n)=0 $ for all $ n\in \NN$. Then, $f(a^n, b^n)=0$ gives the linear dependence relation between the monoid characters $n\mapsto (a^ib^j)^n $ where $i,j $ runs through the monomials $x^i y^j $ of polynomial $f(x,y)$. The hypothesis of the statement guarantees that $n\mapsto (a^i b^j)^n $ are all distinct characters for each monomial $x^i y^j$ of $f$. Such the linear dependence relation contradicts the well known classical result of the linear independence of monoid characters. Thus, such the polynomial $f$ doesn't exist. 
\item Suppose there exists a polynomial $f(x,y) \in \kk[x,y] $ such that $f(n, a^n)=0$ for all $n\in \NN$. Write as 
\[
f(x,y)= b_{d_2}(x)y^{d_2}+ b_{d_2-1}(x) y^{d_2-1}+\cdots b_1(x)y+b_0(x)
\]
where $d_2$ is the $y$-degree of $f$, and $\deg{b_{d_2}(x)}=d_1 $. The proof is done by induction on the $y$-degree $d_2 $, then secondary induction on the degree $d_1 $ of the leading coefficient $b_{d_2}(x) $. 

The base case $d_2=0 $ is trivial. The base case $d_2=0 $ asks whether a polynomial $f(x)\in \kk[x,y] $ such that $f(n)=0 $ for all $n\in \NN $ is trivial. Such the polynomial is obviously $f=0 $. 

\par We proceed inductively, and suppose we have proved that any polynomial $f(x,y)$ such that $\deg_y{f}<d_2 $ and $f(n,a^n)=0 $ for all $n\in \NN$ is trivial. We claim that there does not exist nontrivial $f(x,y)\in \kk[x,y] $ such that $\deg_y{f}=d_2 $, $\deg{b_{d_2}(x)}=0 $, and $f(n,a^n)=0 $ for all $n\in \NN$. 

After dividing by the constant leading coefficient and rewriting the coefficients, 
\[
f(x,y)= y^{d_2}+\sum_{j=0}^{d_2-1} b_j(x) y^j \, . 
\]
The condition implies that
\[
a^{d_2} f(n, a^n)- f(n+1, a^{n+1})= \sum_{j=0}^{d_2-1} (a^{d_2}b_j(n)- a^j b_j(n+1) ) a^{jn}=0 \quad , \quad \text{ for all }n\in \NN \, . 
\]
Then, the nontrivial polynomial $\sum_{j=0}^{d_2-1} (a^{d_2}b_j(x)- a^j b_j(x+1) )y^j $ vanishes on the set $\{(n,a^n) \, \vert \, n \in \NN \} $, and has $y$-degree less than $d_2 $, contradicting the induction hypothesis. Thus, we have proved the claim for $\deg{b_{d_2}(x)}= d_1=0$. 

Proceed inductively with respect to the $\deg{b_{d_2}(x)}=d_1 $, and suppose we have proved that there does not exist nontrivial polynomial $f$ such that $\deg_y{f}=d_2 $, $\deg{b_{d_2}(x)}<d_1 $ and $f(n, a^n)=0 $ for all $n\in \NN $. We claim that there does not exists nontrivial polynomial $f$ with $\deg_y{f}=d_2 $, $\deg{b_{d_2}(x)}=d_1 $ and $f(n,a^n)=0 $ for all $n\in \NN $. 

\par Existence of such the polynomial gives a nontrivial relation 
\[
\begin{aligned}
  a^{d_2} f(n, a^n)- f(n+1, a^{n+1}) = a^{d_2} (b_{d_2}(n)- b_{d_2}(n+1)) a^{d_2 n} + \sum_{j=0}^{d_2-1} (a^{d_2} b_j(n)- a^j b_j(n+1)) a^{jn}=0
\end{aligned}
\]
for all $n\in \NN $. Since the leading coefficient of the polynomial 
\[
a^{d_2}(b_{d_2}(x)- b_{d_2}(x+1)) y^{d_2}+\sum_{j=0}^{d_2-1}(a^{d_2} b_j(x)- a^j b_j(x+1)) y^j
\]
has degree strictly less than $d_1$, we reach the contradiction against the induction hypothesis. Thus, we have proved the claim. Conclude by induction. 
\end{enumerate}
\end{proof}

\par Now, we compute the invariant subvariety lattices for a single $\phi \in \JJ $. The upshot is that the invariant subvarieties of a single automorphism of $\AAA^2 $ falls into one of the three classes described in \Cref{def:invar-subvar-lattice-classification}. 

\begin{propp}
Let $\phi=(ax+ P(y), by+c)\in \JJ_n $ be such that $\deg{P(y)}=n $. Suppose $b=1 $ and $c\neq 0 $. Then, the following are true:
\begin{enumerate}[label=(\roman*)]
\label{prop:computations-b-is-one}
  \item Suppose $a=1$. Then, $\Supp{\frakI_{\phi}}=\AAA^2 $, and $\frakI_{\phi} $ is generated by the fibers of the quotient map $\hat{x}:\AAA^2 \rightarrow \AAA^1:(x,y) \mapsto x-g(y) $ where $\deg{g}=n+1 $. In particular, $\frakI_{\phi} $ contains no 0-dimensional element, i.e. $\phi $ has no torsion point. 

  \item Suppose $a \neq 1$ is root of unity. Then, $\Supp{\frakI_{\phi}}=\AAA^2 $, and $\frakI_{\phi} $ is generated by some unions of the fibers of a $\phi$-equivariant map $\overline{x}:\AAA^2\rightarrow \AAA^1 $. Such $\phi $ has no torsion point. 
  
  \item Suppose $a$ is not root of unity. Then, $\frakI_{\phi} $ consists of three elements: the empty set $\emptyset $, a curve isomorphic to $\AAA^1 $, and the whole $\AAA^2 $. 
\end{enumerate}
\end{propp}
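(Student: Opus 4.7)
The plan is to produce, in every case, a $\phi$-equivariant function of the form $x - g(y)$ by solving the twisted difference equation $g(y+c) - a g(y) = P(y)$ for some $g \in \kk[y]$. The key linear operator is $L_a : \kk[y] \to \kk[y]$, $L_a(g) = g(y+c) - a g(y)$: when $a = 1$, $L_1$ is the ordinary finite difference operator, which is surjective with kernel $\kk$ and drops degree by exactly one, so we obtain $g$ of degree $n+1$ with $L_1 g = P$; when $a \neq 1$, $L_a$ preserves degree on each homogeneous component (the leading term scales by $1 - a \neq 0$) and is therefore a bijection on polynomials of degree $\leq n$, yielding $g$ of degree $\leq n$ with $L_a g = P$. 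Setting $\hat{x} := x - g(y)$ in case (i) gives $\phi^* \hat{x} = \hat{x}$, and setting $\overline{x} := x - g(y)$ in cases (ii) and (iii) gives $\phi^* \overline{x} = a \overline{x}$; by \Cref{lemma:equivar-affine} these are genuine $\phi$-equivariant maps $\AAA^2 \to \AAA^1$, and $(\overline{x}, y)$ (respectively $(\hat{x}, y)$) is an algebraic coordinate system on $\AAA^2$. In all three cases $\phi$ has no torsion points, since the $y$-coordinate of $\phi^n(x,y)$ is $y + nc$ and $c \neq 0$.

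For case (i), $\phi$ preserves every fiber of $\hat{x}$ and acts on each fiber, parameterized by $y$, as the free translation $y \mapsto y + c$. Every $\phi$-orbit is therefore Zariski dense in its fiber, so any $\phi$-invariant closed subvariety meeting a fiber contains the entire fiber. Hence $\frakI_\phi$ is generated by the fibers of $\hat{x}$ and $\Supp{\frakI_\phi} = \AAA^2$. For case (ii), if $r$ denotes the order of $a$, then $\phi$ sends $\overline{x}^{-1}(t)$ to $\overline{x}^{-1}(at)$, so $\phi^r$ preserves every fiber and acts on it by $y \mapsto y + rc$. Applying the same density argument to $\phi^r$ shows that every $\phi^r$-invariant closed subvariety is a union of fibers of $\overline{x}$, so every $\phi$-invariant closed subvariety is a union of fibers indexed by an $a$-invariant closed subset of $\AAA^1$. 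The atomic $a$-invariant closed subsets of $\AAA^1$ are $\{0\}$ and the finite orbits $\{t, at, \ldots, a^{r-1}t\}$, so the generators of $\frakI_\phi$ are the single fiber $\overline{x}^{-1}(0)$ together with the $r$-fold unions $\bigcup_{i=0}^{r-1} \overline{x}^{-1}(a^i t)$, and $\Supp{\frakI_\phi} = \AAA^2$.

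The heart of the argument, and the main obstacle, is case (iii); here the decisive input is \Cref{lemma:transcendence}(ii), which is needed to control orbits of generic points. The goal is to show that every point $p = (x_0, y_0)$ with $\overline{x}(p) = t_0 \neq 0$ has Zariski dense $\phi$-orbit in $\AAA^2$. Since $\phi^* \overline{x} = a \overline{x}$ and the $y$-coordinate shifts by $c$, the orbit of $p$ in the coordinates $(\overline{x}, y)$ is $\{(a^k t_0,\, y_0 + kc) : k \in \ZZ\}$. After the affine rescaling $(\overline{x}, y) \mapsto (\overline{x}/t_0,\, (y - y_0)/c)$ this becomes $\{(a^k, k) : k \in \ZZ\}$, which is Zariski dense in $\AAA^2$ by \Cref{lemma:transcendence}(ii) (the subset indexed by $k \in \NN$ is already dense, up to the coordinate swap). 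Consequently any proper $\phi$-invariant closed subvariety must lie inside $\overline{x}^{-1}(0) = \{x = g(y)\}$. Since $\overline{x}^{-1}(0) \cong \AAA^1$ via the $y$-coordinate, and $\phi$ acts on it by the free translation $y \mapsto y + c$, the only $\phi$-invariant closed subsets of $\overline{x}^{-1}(0)$ are $\emptyset$ and the whole curve. This exhausts $\frakI_\phi = \{\emptyset,\, \overline{x}^{-1}(0),\, \AAA^2\}$, as claimed.
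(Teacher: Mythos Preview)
Your proof is correct and follows essentially the same strategy as the paper: solve the difference equation to produce the coordinate change $\hat x$ (resp.\ $\overline x$), reduce $\phi$ to the model form $(\hat x,\hat y+c)$ or $(a\overline x,\overline y+c)$, and then analyze orbits fiberwise, invoking \Cref{lemma:transcendence}(ii) for case (iii). The only notable variation is in the execution of (iii): the paper first computes the Zariski closure $\overline{\langle\overline\phi\rangle}=\{(t_1\overline x,\overline y+t_2)\}\subset\GL_2$ via \Cref{lemma:transcendence}(ii) and then reads off the invariant subvarieties from the action of this $2$-dimensional group, whereas you apply the lemma directly to the orbit $\{(a^k t_0,\,y_0+kc)\}$ of an individual point with $t_0\neq 0$; your route is slightly more direct, while the paper's has the advantage that the group closure is reusable information.
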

\begin{proof}
\begin{enumerate}[label=(\roman*)]
  \item Let $\phi^* $ act as a linear endomorphism on $V_{n+1} $ (see \Cref{Notation:dual-rep-of-jonquieres}). As a linear endomorphism $\phi^*-\id_{V_{n+1}}\in \End(V_{n+1}) $, it has the matrix representation as 
  \[
  \begin{pmatrix}
  0 & c & c^2 & \cdots & c^{n+1} & a_0 \\
  0 & 0 & 2c &  & (n+1)c^{n} & a_1 \\
   &  & 0 &  & \vdots &  \\
   &  &  & \ddots & (n+1)c & a_n \\
   & \text{\huge0} &  &  & 0 & 0 \\
    &  &  &  & 0 & 0
  \end{pmatrix} \, . 
  \]
  Hence, $(\phi^*-\id)(\kk \langle 1, y, \cdots, y^{n+1}\rangle)= \kk \langle 1, y, \cdots y^n \rangle $, and there exists $g(y)\in \kk[y]$ which satisfies $\deg{g(y)}=n+1 $ and $(\phi^*-\id)g(y)=P(y) $. Thus, introduce the change of coordinates $\hat{x}=x-g(y) $, $\hat{y}=y $, and let $\alpha=(\hat{x}, \hat{y})\in \Aut(\AAA^2) $ be the automorphism of change of coordinate. The new coordinates satisfy $\phi^* \hat{x}= \hat{x} $, $\phi^* \hat{y}= \hat{y}+c $, and under the change coordinates, $\hat{\phi}= \alpha \circ \phi \circ \alpha^{-1} $ is 
  \[
  \hat{\phi}(\hat{x}, \hat{y})= (\hat{x}, \hat{y}+c) \, . 
  \]
  
  Since a closed subvariety $C\subset \AAA^2 $ is invariant by $\phi $ if and only if $\alpha(C) $ is invariant by $\hat{\phi} $, we aim to describe the invariant subvariety lattice of $\hat{\phi} $ in the $(\hat{x}, \hat{y}) $-coordinate system. 

  Let $\alpha(p)=(\hat{x}_0, \hat{y}_0) $ be a point. Then, the $\hat{\phi}$-orbit of $\alpha(p) $ is contained in the $\hat{x} $-coordinate line $\hat{x}=\hat{x}_0 $, and coordinate line $\hat{x}=\hat{x}_0 $ contains infinitely many points of the $\hat{\phi} $-orbit of $\alpha(p) $. Thus, the Zariski closure of the $\hat{\phi} $-orbit of $\alpha(p) $ is the coordinate line $\hat{x}=\hat{x}_0 $. 

  It is clear that any union of $\hat{x} $-coordinate lines are invariant by $\hat{\phi} $. Conversely, suppose $\alpha(C)$ is closed subvariety which is invariant by $\hat{\phi}$. For each point $\alpha(p)\in \alpha(C)$, $\alpha(C)$ contains the whole $\hat{x} $-coordinate line through the point $\alpha(p)$. If $\alpha(C) $ contains a irreducible component whih is not a $\hat{x} $-coordinate line, then the image $\hat{x}(\alpha(C)) \subset \AAA^1$ is infinite. Hence, $\alpha(C) $ contains infinitely many $\hat{x}$-coordinate lines, which implies $\alpha(C)=\AAA^2 $. 
  
  Concluding, the $\hat{x}$-coordinate lines generate the invariant subvariety lattice of $\hat{\phi} $. It is also clear that $\hat{\phi} $ has no torsion points. 

  \item Inspect the matrix form of $\phi^*-a \id_{V_n} \in \End(V_n) $:
  \[
    \begin{pmatrix}
      1-a & c & \cdots &   & a_0 \\
       & 1-a &   & \textbf{\huge*} & \vdots \\
       &  & \ddots &  & a_{n-1} \\
      \text{\huge0} &   &  & 1-a & a_n \\
        &   &  &  & 0
      \end{pmatrix} \, . 
  \]
  Thus, arguing as in (i), there exists $\overline{x}=x-h(y)$ such that $\phi^* \overline{x}= a \overline{x} $ since $a\neq 1$. Let $\overline{y}=y $ and $\alpha=(\overline{x}, \overline{y}) \in \Aut(\AAA^2)$ be the automorphism of change of coordinate. Then, $\overline{\phi}= \alpha \circ \phi \circ \alpha^{-1} $ is 
  \begin{equation*}
    \overline{\phi}(\overline{x}, \overline{y})= (a\overline{x}, \overline{y}+c) \, .
  \end{equation*}
  
  Since a closed subvariety $C\subset \AAA^2 $ is invariant by $\phi $ if and only if $\alpha(C) $ is invariant by $\overline{\phi} $, we aim to describe the invariant subvariety lattice of $\overline{\phi} $ in the $(\overline{x}, \overline{y}) $-coordinate system. 

  Let $\alpha(p)=(\overline{x}_0, \overline{y}_0) $ be a point. The $\overline{\phi} $-orbit of $\alpha(p) $ is contained in the $\langle a \rangle$-orbit of the $\overline{x} $-coordinate line $\overline{x}=\overline{x}_0 $ (the root of unity $a$ acting as $a.\{ \overline{x}=\overline{x}_0\}= \{\overline{x}= a\overline{x}_0\} $), and each irreducible component of the $\langle a \rangle $-orbit of the $\overline{x} $-coordinate line $\overline{x}=\overline{x}_0 $ contains infinitely many points of the $\overline{\phi} $-orbit of $\alpha(p) $. Thus, Zariski closure of the $\overline{\phi} $-orbit of $\alpha(p) $ is the $\langle a \rangle$-orbit of the coordinate line $\overline{x}=\overline{x}_0 $. 

  Union of $\langle a \rangle $-orbit of $\overline{x} $-coordinate lines is obviously invariant by $\overline{\phi} $. Conversely, suppose $\alpha(C) $ is a closed subvariety which is invariant by $\overline{\phi} $. For each point $\alpha(p)\in \alpha(C)$, $\alpha(C) $ contains the $\langle a \rangle $-orbit of the $\overline{x} $-coordinate line through $\alpha(p) $. If $\alpha(C) $ contains a irreducible component which is not a $\overline{x} $-coordinate line, then the image $\overline{x}(\alpha(C)) \subset \AAA^1$ is infinite. Hence, $\alpha(C) $ contains infinitely many $\overline{x} $-coordinate lines, which implies $\alpha(C)=\AAA^2 $.

  Concluding, the invariant subvariety lattice of $\overline{\phi} $ is generated by the $ \langle a \rangle$-orbits of a $\overline{x}$-coordinate line. It is also obvious that $\overline{\phi} $ has no torsion points.

  \item We obtain the change of coordinates $\alpha $ and $\overline{\phi}(\overline{x}, \overline{y})=(a\overline{x}, \overline{y}+c) $ as in (ii). Since a closed subvariety $C\subset \AAA^2 $ is invariant by $\phi $ if and only if $\alpha(C) $ is invariant by $\overline{\phi} $, we aim to describe the invariant subvariety lattice of $\overline{\phi} $ in the $(\overline{x}, \overline{y}) $-coordinate system. 
  
  A closed subvariety is invariant by $\overline{\phi}$ if and only if it is invariant by the Zariski closure of $\langle \overline{\phi}\rangle \subset \GL_2(\kk) $. Since $a$ is not root of unity, the Zariski closure of the subgroup $\langle \overline{\phi} \rangle=\{\overline{\phi}^i(\overline{x}, \overline{y})= (a^n \overline{x}, \overline{y}+ nc)\, | \, n\in \ZZ \}\subset \GL_2(\kk) $ is 
  \[
    \overline{\langle \overline{\phi} \rangle }= \{\overline{\phi}_{t_1, t_2}(\overline{x}, \overline{y})= (t_1 \overline{x}, \overline{y}+ t_2)\, | \, t_1 \in \kk^{\times},\, t_2 \in \kk \} \subset \GL_2(\kk) 
  \]
  by the \Cref{lemma:transcendence} (ii). 
  
  Since $\frakI_{\overline{\phi}} = \frakI_{\overline{\langle \overline{\phi} \rangle}} \subset \frakI_{\overline{\phi}_{1,t}} $ where $\overline{\phi}_{1,t}= (\overline{x}, \overline{y}+t) $, any $\overline{\phi }$-invariant closed subvariety is invariant by $\overline{\phi}_{1,t} $. Arguing as in (i), any $\overline{\phi}_{1,t} $-invariant closed subvariety is a union of $\overline{x} $-coordinate lines. 

  It is obvious that the line $\overline{x}=0 $ is $\overline{\phi} $-invariant. If a closed subvariety $\alpha(C) $ contains a point outside the line $\overline{x}=0 $, then $\alpha(C) $ should contain infinitely many $\overline{x} $-coordinate lines which are the orbits of the point by $\overline{\phi}_{t,0}=(t \overline{x}, \overline{y}) $ and $\overline{\phi}_{1, t}=(\overline{x}, \overline{y}+t) $. Thus, the only nontrivial $\overline{\phi} $-invariant closed subvariety is the coordinate line $\overline{x}=0 $. 
\end{enumerate}
\end{proof}

Following proposition computes the invariant subvariety lattice of $\phi $ when one of $a$ or $b$ is a root of unity. The case when none of $a$ or $b$ is root of unity is dealt in \Cref{prop:computations-no-unity}. 
\begin{propp}
\label{prop:computations-unity}
Let $\phi=(ax+P(y), by+c)\in \JJ_n $ be such that $\deg{P(y)}=n$. If $b\neq 1 $, or $(b,c)=(1,0) $, then let the notations be as in \Cref{lemma:diagonalization}. Then, the following are true:
\begin{enumerate}[label=(\roman*)]
  \item Suppose that $a$ and $b$ are roots of unity, and that $h_1 = 0$. Then, $\phi $ is of finite order. 

  \item Suppose that $a$ and $b$ are roots of unity, and that $h_1\neq 0$. Then, $\Supp{\frakI_{\phi}}= \AAA^2 $, and $\frakI_{\phi} $ is generated by some unions of the $y$-coordinate lines $y=y_0 $, and the torsion points on the lines $h_1(y)=0 $. A point of $\AAA^2 $ is torsion point of $\phi $ if and only if it lies on the lines $h_1(y)=0 $.
  
  \item Suppose $a$ is not root of unity, and $b$ is a root of unity. Then, $\Supp{\frakI_{\phi}}= \AAA^2 $, and $\frakI_{\phi} $ is generated by the curve $\tilde{x}=0 $, some unions of $y$-coordinate lines $y=y_0 $, and the torsion points on the curve $\tilde{x}=0 $. A point of $\AAA^2 $ is torsion point of $\phi $ if and only if it lies on the curve $\tilde{x}=0 $.
  
  \item Suppose $a$ is root of unity and $b$ is not root of unity. Then, $\tilde{x} $ is $\phi$-equivariant, $\Supp{\frakI_{\phi}}=\AAA^2 $, and $\frakI_{\phi} $ is generated by some unions of the fibers of $\tilde{x} $, the line $\tilde{y}=0 $, and torsion points on the line $\tilde{y}=0 $. A point of $\AAA^2 $ is torsion point of $\phi $ if and only if it lies on the line $\tilde{y}=0 $. 
\end{enumerate}
\end{propp}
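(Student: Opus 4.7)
The plan is to work in the coordinates $(\tilde{x},\tilde{y})$ supplied by \Cref{lemma:diagonalization}, where
\[
  \tilde{\phi}^n(\tilde{x},\tilde{y}) = \bigl(a^n\tilde{x} + na^{n-1}\tilde{h}_1(\tilde{y}),\, b^n\tilde{y}\bigr),
\]
and to read off from this formula the torsion locus and the Zariski closures of non-torsion orbits, from which $\frakI_\phi$ is determined. The key structural observation is that $\tilde{h}_1(\tilde{y})=\sum_{a=b^i}\tilde{a}_i\tilde{y}^i$, so if exactly one of $a,b$ is a root of unity (cases (iii) and (iv)) then no $i$ satisfies $a=b^i$ and therefore $\tilde{h}_1=0$ automatically, reducing $\tilde{\phi}$ to the diagonal linear map $(a\tilde{x},b\tilde{y})$. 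Case (i) is then immediate: for $N=\mathrm{lcm}(\mathrm{ord}(a),\mathrm{ord}(b))$ one has $a^N=b^N=1$ and the formula yields $\tilde{\phi}^N=\id$.

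For case (ii), with both $a,b$ roots of unity and $\tilde{h}_1\neq 0$, setting $N=\mathrm{lcm}(\mathrm{ord}(a),\mathrm{ord}(b))$ gives
\[
  \tilde{\phi}^N(\tilde{x},\tilde{y}) = \bigl(\tilde{x}+Na^{-1}\tilde{h}_1(\tilde{y}),\,\tilde{y}\bigr),
\]
a triangular shear. I will first identify the torsion locus: since $\characteristic\kk=0$, this iterate fixes $(\tilde{x}_0,\tilde{y}_0)$ iff $\tilde{h}_1(\tilde{y}_0)=0$, and every torsion point of $\tilde{\phi}$ is fixed by some $\tilde{\phi}^{kN}$, so the torsion locus is exactly $\{\tilde{h}_1(\tilde{y})=0\}$. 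Next, for a non-torsion point the $\langle\tilde{\phi}^N\rangle$-orbit is an infinite arithmetic progression inside the line $\tilde{y}=\tilde{y}_0$ and hence Zariski-dense there; composing with the residual cyclic action $\tilde{y}\mapsto b\tilde{y}$ of $\tilde{\phi}$ gives an orbit closure equal to the $\langle b\rangle$-orbit of the line $\tilde{y}=\tilde{y}_0$. Finally, to rule out other invariant irreducible curves, if $C$ is invariant irreducible and not of the form $\tilde{y}=c$, then $\tilde{y}|_C$ is non-constant, so some $(\tilde{x}_0,\tilde{y}_0)\in C$ satisfies $\tilde{h}_1(\tilde{y}_0)\neq 0$; the infinite $\tilde{\phi}^N$-orbit of that point inside $C\cap\{\tilde{y}=\tilde{y}_0\}$ then forces the two irreducible curves $C$ and $\{\tilde{y}=\tilde{y}_0\}$ to coincide, a contradiction.

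For cases (iii) and (iv), since $\tilde{h}_1=0$ the map $\tilde{\phi}=(a\tilde{x},b\tilde{y})$ lies in the diagonal torus $\kk^\times\times\kk^\times\subset\GL_2(\kk)$, so the invariant subvarieties of $\tilde{\phi}$ coincide with those of $\overline{\langle\tilde{\phi}\rangle}$. In case (iii), with $a$ not a root of unity and $b$ of order $m$, I will decompose $\langle\tilde{\phi}\rangle$ into $m$ cosets modulo $m$; within each coset the $\tilde{x}$-values form an infinite subset of $\AAA^1$ (as $a^m$ is not a root of unity), so taking closures coset-by-coset gives $\overline{\langle\tilde{\phi}\rangle}=\kk^\times\times\langle b\rangle$. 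Enumerating the orbits of this algebraic group on $\AAA^2$---the origin; finite $\langle b\rangle$-orbits on $\{\tilde{x}=0\}$; the line $\{\tilde{y}=0\}$ arising from $(\tilde{x}_0,0)$ with $\tilde{x}_0\neq 0$; and, for generic points, a union of $m$ horizontal lines $\{\tilde{y}=b^k\tilde{y}_0\}$---yields the generators claimed in (iii) and identifies the torsion locus as $\{\tilde{x}=0\}$. Case (iv) will be handled symmetrically: $\overline{\langle\tilde{\phi}\rangle}=\langle a\rangle\times\kk^\times$ produces fibers of $\tilde{x}$ in $\langle a\rangle$-orbits as the general invariant curves, the line $\{\tilde{y}=0\}$ as the torsion locus, and the equivariance of $\tilde{x}$ from $\phi^*\tilde{x}=a\tilde{x}\in\kk[\tilde{x}]$.

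The main obstacle is case (ii): because $\tilde{\phi}$ is no longer linear, $\overline{\langle\tilde{\phi}\rangle}$ is not an algebraic torus and one cannot simply invoke \Cref{lemma:transcendence}. The workaround will be to pass to the pure shear $\tilde{\phi}^N$, whose orbits are immediate from the formula, and then to recover $\frakI_\phi$ by folding in the residual cyclic action of $\langle\tilde{\phi}\rangle/\langle\tilde{\phi}^N\rangle$.
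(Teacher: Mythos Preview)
Your proposal is correct and follows essentially the same route as the paper: both pass to the $(\tilde{x},\tilde{y})$-coordinates of \Cref{lemma:diagonalization}, observe that $\tilde{h}_1=0$ in cases (iii)--(iv) so that $\tilde{\phi}$ is diagonal, and then read off the torsion locus and orbit closures from the explicit iterate formula. Your device of passing to the pure shear $\tilde{\phi}^N$ in (ii) and of computing $\overline{\langle\tilde{\phi}\rangle}\subset(\kk^\times)^2$ in (iii)--(iv) are mild reorganizations of the paper's direct orbit analysis, but the substance is identical.
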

\begin{proof}
After applying the change of coordinates of \Cref{lemma:diagonalization}, a subvariety $C\subset \AAA^2 $ is invariant by $\phi $ if and only if $\alpha(C) $ is invariant by $\tilde{\phi} $. Thus, we aim to compute the invariant subvariety lattice for $\tilde{\phi} $ in the $(\tilde{x}, \tilde{y}) $-coordinate system. 
\begin{enumerate}[label=(\roman*)]
  \item The change of coordinates of \Cref{lemma:diagonalization} becomes $\tilde{\phi}(\tilde{x}, \tilde{y})= (a \tilde{x}, b \tilde{y}) $. Since $a$ and $b$ are roots of unity, $\tilde{\phi} $ is of finte order. 

  \item Let $\alpha(p)=(\tilde{x}_0, \tilde{y}_0) $ be a point. The description of the $\tilde{\phi} $-orbit of $\alpha(p) $ is provided by the \Cref{lemma:diagonalization} that $\tilde{\phi}^n (\alpha(p))= (a^n \tilde{x}_0 + n a^{n-1} \tilde{h}_1(\tilde{y_0}), b^n \tilde{y}_0 ) $. If $\alpha(p) $ is such that $\tilde{h}_1(\tilde{y}_0)=0 $, then $\tilde{\phi} $ acts on $\alpha(p) $ by $\tilde{\phi}^n(\alpha(p))= (a^n \tilde{x}_0, b^n \tilde{y}_0) $. Hence, all the points on the $\tilde{y} $-coordinate lines $\tilde{h}_1(\tilde{y}_0) =0$ are torsion points. 
  
  Suppose $\alpha(p) $ is such that $\tilde{h}_1(\tilde{y}_0)\neq 0 $. By the \Cref{lemma:diagonalization}, $\alpha(p)$ is not torsion point of $\tilde{\phi} $, $\tilde{\phi}$-orbit of $\alpha(p) $ is contained in the $\langle b \rangle $-orbit of $\tilde{y} $-coordinate lines $\tilde{y}=\tilde{y}_0 $, and each irreducible component of the $\langle b \rangle $-orbit of the line $\tilde{y}=\tilde{y}_0 $ contains infinitely many points among the $\tilde{\phi} $-orbit of $\alpha(p)$. Thus, the Zariski closure of the $\tilde{\phi} $-orbit of $\alpha(p) $ is the $\langle b \rangle $-orbit of the $\tilde{y} $-coordinate line $\tilde{y}=\tilde{y}_0 $. 
  
  The $\langle b \rangle $-orbit of the $\tilde{y} $-coordinate lines is obviously $\tilde{\phi} $-invariant. Conversely, suppose the closed subvariety $\alpha(C) $ is invariant by $\tilde{\phi} $. For each point $\alpha(p)\in \alpha(C) $ which doesn't lie on the curve $\tilde{h}_1(\tilde{y})=0 $, $\alpha(C) $ contains the whole $\tilde{y} $-coordinate line through $\alpha(p) $ and the $\langle b \rangle $-orbit of the line. If $\alpha(C) $ contains a dimension 1 irreducible component which is not a $\tilde{y} $-coordinate line, then the image $\tilde{y}(\alpha(C))\subset \AAA^1 $ is infinite, and $\alpha(C) $ contains infinitely many $\tilde{y} $-coordinate lines. Hence, $\alpha(C)=\AAA^2 $. 
  
  Concluding, any $\tilde{\phi} $-invariant curve is union of the $\langle b \rangle $-orbit of $\tilde{y} $-coordinate lines.

  \item Since $b$ is root of unity while $a$ is not, $\tilde{h}_1(\tilde{y})=\sum_{\substack{0 \leq i \leq n\\ a= b^i}} \tilde{a}_i \tilde{y}^i=0$ is an empty sum. Thus, $\phi $ is diagonalized as $\tilde{\phi}(\tilde{x}, \tilde{y})= (a \tilde{x}, b \tilde{y}) $. The Zariski closure of the subgroup $\langle \tilde{\phi} \rangle$ is semisimple, and we can analyze the affine GIT quotient to deduce the conclusion. Instead, we stick with the elementary argument that has use been used until now. 
  
  Suppose $\alpha(p)=(\tilde{x}_0, \tilde{y}_0) $ lies on the coordinate line $\tilde{x}=0 $, i.e. $\alpha(p)=(0, \tilde{y}_0) $. Since $b$ is root of unity, $\alpha(p) $ is torsion point of $\tilde{\phi}$. 
  
  Suppose $\alpha(p)$ does not lie on the coordinate line $\tilde{x}=0 $, i.e. $\tilde{x}_0\neq 0 $. Then, $\tilde{\phi}$-orbit of $\alpha(p) $ is contained in the $\langle b \rangle $-orbit of the $\tilde{y} $-coordinate line $\tilde{y}=\tilde{y}_0 $, and each irreducible component of the $\langle b \rangle $-orbit of the line $\tilde{y}=\tilde{y}_0 $ contains infinitely many points of the $\tilde{\phi} $-orbit of $\alpha(p) $. Thus, the Zariski closure of the $\tilde{\phi} $-orbit of $\alpha(p) $ is the $\langle b \rangle $-orbit of the $\tilde{y}$-coordinate line $\tilde{y}=\tilde{y}_0 $. In particular, $\alpha(p) $ is not torsion point of $\tilde{\phi} $. 

  It is obvious that the $\langle b \rangle $-orbit of a $\tilde{y} $-coordinate line is invariant by $\tilde{\phi} $. Conversely, suppose a closed subvariety $\alpha(C) $ contains an irreducible curve which is neither the line $\tilde{x}=0 $ nor a $\tilde{y} $-coordinate line. Then, the image $\tilde{y}(\alpha(C))\subset \AAA^1$ is infinite, and $\alpha(C) $ intersects with infinitely many $\tilde{y} $-coordinate lines. Since $\alpha(C) $ doesn't contain the line $\tilde{x}=0 $, the intersection of $\alpha(C) $ and a $\tilde{y} $-coordinate line is not on $\tilde{x}=0$ for almost all $\tilde{y}$-coordinate lines. As $\alpha(C) $ contains the orbit Zariski closure of a point in it, $\alpha(C) $ contains infinitely many $\tilde{y} $-coordinate lines, which implies that $\alpha(C)=\AAA^2 $. 

  Conluding, the line $\tilde{x}=0 $ and the $\tilde{y} $-coordinate lines are all the invariant curves for $\tilde{\phi} $ as in the statement.

  \item This is the same as in (iii) with the role of $a$ and $b$ reversed. Since $a$ is root of unity while $b$ is not, we have the empty sum $\tilde{h}_1(\tilde{y})=0 $, and $\tilde{\phi} $ is a diagonal map $\tilde{\phi}=(a\tilde{x}, b\tilde{y}) $ as in (iii). Simply, exchange $\tilde{x} $ and $\tilde{y} $ from the proof of (iii), and we have the desired conclusion. 
\end{enumerate}
\end{proof}

\begin{propp}
\label{prop:computations-no-unity}
Let $\phi =(ax+ P(y), by+c ) \in \JJ_n  $ be such that $\deg{P(y)}=n$. Suppose neither $a$ nor $b$ are roots of unity. Let the notations be as in \Cref{lemma:diagonalization}. Then, the following are true:
\begin{enumerate}[label=(\roman*)]
\item Suppose that $h_1=0 $, and that there exist nonzero integers $r_1$ and $r_2 $ such that $a^{r_1}=b^{r_2} $. Let $r_1, r_2 $ be the least such pair, and $r_1=ds_1 $, $r_2=d s_2 $ where $d=\mathrm{gcm}(r_1, r_2)$. Then, $\Supp{\frakI_{\phi}} = \AAA^2$, and $\frakI_{\phi} $ is generated by the some unions of the fibers of the $\phi $-equivariant map $\AAA^2 \dashrightarrow \PP^1: (x,y) \mapsto [\tilde{x}^{s_1}: \tilde{y}^{s_2}] $, and the unique fixed point $\tilde{x}=\tilde{y}=0 $.

\item Suppose that $h_1=0 $, and that there exist no nonzero integers $r_1 $ and $r_2 $ such that $a^{r_1}=b^{r_2} $. Then, $\frakI_{\phi} $ is finite, and is generated by three elements: the curve $\tilde{x}=0 $, the line $\tilde{y}=0 $, and the unique fixed point $\tilde{x}=\tilde{y}=0 $. 

\item Suppose $h_1\neq 0$. Then, $\frakI_{\phi}$ is finite, and is generated by two elements: the line $\tilde{y}=0 $, and the unique fixed point $\tilde{x}=\tilde{y}=0 $. 
\end{enumerate}
\end{propp}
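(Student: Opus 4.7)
The plan in every case is to apply \Cref{lemma:diagonalization} first, which is legal because $b$ is not a root of unity and in particular $b \neq 1$, and then study the diagonalized dynamics $\tilde\phi^n(\tilde x, \tilde y) = (a^n \tilde x + n a^{n-1} \tilde h_1(\tilde y), b^n \tilde y)$ in the new coordinates. Since $C \subset \AAA^2$ is $\phi$-invariant if and only if $\alpha(C)$ is $\tilde\phi$-invariant, it suffices to compute $\frakI_{\tilde\phi}$, which in turn coincides with the lattice of closed subvarieties preserved by the Zariski closure $H := \overline{\langle\tilde\phi\rangle}$ inside $\Aut(\AAA^2)$. So in each of the three cases I would identify $H$ as a concrete algebraic subgroup and then read off invariants from its orbit decomposition.

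For (ii), the hypotheses give $\tilde\phi = (a\tilde x, b\tilde y)$ diagonal, and \Cref{lemma:transcendence}(i) applied directly to $(a,b)$ shows $H$ is the full diagonal torus $(\kk^\times)^2$. Its four orbits on $\AAA^2$ are the open torus, the two punctured axes, and the origin, whose closures give precisely the three claimed generators $\{\tilde x=0\}$, $\{\tilde y=0\}$, and $\{(0,0)\}$.

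For (i), the setup is again $\tilde\phi = (a\tilde x, b\tilde y)$, but now $a^{r_1}=b^{r_2}$ rewrites as $a^{s_1} = \zeta b^{s_2}$ for some $d$-th root of unity $\zeta$. I would identify the one-parameter subgroup $H^0 = \{(\lambda^{s_1}, \lambda^{s_2}): \lambda \in \kk^\times\}$ as the identity component of $H$, with $H/H^0 \cong \langle\zeta\rangle$, and verify the equivariance $\tilde\phi^*[\tilde x^{s_1}:\tilde y^{s_2}] = [\zeta \tilde x^{s_1}: \tilde y^{s_2}]$ of the quotient map $\AAA^2 \dashrightarrow \PP^1$ by $H^0$. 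The $H$-invariant closed subvarieties then split into the fixed origin together with $\langle\zeta\rangle$-invariant unions of fibers of this quotient, matching the claim (with the axes $\tilde x = 0$ and $\tilde y = 0$ appearing as the fibers over $[0:1]$ and $[1:0]$).

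For (iii), the condition $h_1 \neq 0$ combined with $b$ not a root of unity forces $\tilde h_1(\tilde y) = \tilde a_i \tilde y^i$ to be a single nonzero monomial with $a = b^i$ and $i \geq 1$. The main technical step, which I expect to be the hardest point, is to exhibit the two-parameter family
\[ \psi_{t,u}(\tilde x, \tilde y) := (t^i(\tilde x + u\tilde a_i \tilde y^i),\, t\tilde y), \quad (t,u) \in \kk^\times \times \kk, \]
verify via the monomial identity $\tilde a_i (t\tilde y)^i = t^i \tilde a_i \tilde y^i$ that composition closes up as $\psi_{t',u'}\circ \psi_{t,u} = \psi_{t't,\,u+u'}$ (so these form a subgroup of $\JJ$ isomorphic to $\kk^\times \times \kk$), and observe $\tilde\phi^n = \psi_{b^n,\,n/a}$. \Cref{lemma:transcendence}(ii) applied with parameter $b$ then gives Zariski density of $\{(b^n, n/a): n \in \ZZ\}$ in $\kk^\times \times \kk$, so $H$ is exactly this two-dimensional group. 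Its three orbits on $\AAA^2$ — the origin, the punctured axis $\{\tilde y = 0\}\setminus\{(0,0)\}$, and the open complement $\{\tilde y \neq 0\}$ — yield the two claimed generators. The torsion-point identifications in all three cases are then immediate from these orbit descriptions together with $a, b$ not being roots of unity.
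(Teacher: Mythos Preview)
Your proposal is correct and follows essentially the same route as the paper: apply \Cref{lemma:diagonalization}, then in each case identify the Zariski closure $H=\overline{\langle\tilde\phi\rangle}$ (using \Cref{lemma:transcendence} for (ii) and (iii)) and read off the invariant subvarieties from the $H$-orbit decomposition. Your parametrization $\psi_{t,u}$ in (iii) is in fact a cleaner coordinatization of the same two-dimensional group the paper writes down.

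One small slip in (i): your claimed identity component $H^0=\{(\lambda^{s_1},\lambda^{s_2})\}$ has the exponents swapped. For $(\lambda^{s_1},\lambda^{s_2})$ to lie in $H=\{(u,v):u^{r_1}=v^{r_2}\}$ one needs $\lambda^{ds_1^2}=\lambda^{ds_2^2}$, which fails unless $s_1=s_2$. The correct parametrization is $H^0=\{(\lambda^{s_2},\lambda^{s_1}):\lambda\in\kk^\times\}$, which is exactly the subgroup on whose orbits $[\tilde x^{s_1}:\tilde y^{s_2}]$ is constant. The paper sidesteps this by not naming $H^0$ at all in (i): it just checks $\tilde\phi^*\pi=\zeta_d\pi$ directly and computes orbit closures elementarily, so your more structural description via $H/H^0\cong\langle\zeta\rangle$ is a mild (and, once corrected, valid) variant rather than a different argument.
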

\begin{proof}
As was done in \Cref{prop:computations-b-is-one} and \Cref{prop:computations-unity}, we apply the change of coordinate of \Cref{lemma:diagonalization}, and aim to compute the invariant subvarieties of $\tilde{\phi} $ in the $(\tilde{x}, \tilde{y}) $-coordinate system. 

\begin{enumerate}[label=(\roman*)]
\item By the assumption that $h_1=0 $, we immediately diagonalize as $\tilde{\phi}(\tilde{x}, \tilde{y})=(a\tilde{x}, b\tilde{y}) $. The Zariski closure of the subgroup $\langle \tilde{\phi} \rangle $ is semisimple, and we can analyze the projective GIT quotient to derive the conclusion. Instead, we stick with the more elementary argument used until now.

Define $\pi:\AAA^2 \dashrightarrow \PP^1 $ as $\pi(\tilde{x}, \tilde{y})=[\tilde{x}^{s_1}: \tilde{y}^{s_2}]$. Identifying $\pi$ as the rational function $\frac{\tilde{x^{s_1}}}{\tilde{y^{s_2}}} $ on $\AAA^2 $, we have 
\[
\tilde{\phi}^* \pi = \tilde{\phi}^* \frac{\tilde{x}^{s_1}}{\tilde{y}^{s_2}} = \frac{a^{s_1}}{b^{s_2}} \pi = \zeta_d \pi
\]
where $\zeta_d=\frac{a^{s_1}}{b^{s_2}} $ is the $d$-th root of unity. Hence, by the \Cref{lemma:equivar-projective}, $\pi $ is $\tilde{\phi} $-equivariant, and the action of $\tilde{\phi} $ on $\AAA^2 $ descends down to the finite order projective automorphism $([x:y]\mapsto [\zeta_d x:y]) \in \Aut(\PP^1) $. 

The point $(0,0) $ is obviously a fixed point of $\tilde{\phi} $. Let $\alpha(p)=(\tilde{x}_0, \tilde{y}_0) \neq (0,0)$. The $\tilde{\phi} $-orbit of $\alpha(p) $ is contained in the curves
\[
\begin{gathered}
\tilde{y}_0^{s_2} \tilde{x}^{s_1} - \tilde{x}_0^{s_1}\tilde{y}^{s_2} =0 \, , \\
\tilde{y}_0^{s_2} \tilde{x}^{s_1} - \tilde{x}_0^{s_1} \zeta_d \tilde{y}^{s_2} =0 \, , \\
\vdots \\
\tilde{y}_0^{s_2} \tilde{x}^{s_1} - \tilde{x}_0^{s_1} \zeta_d^{d-1} \tilde{y}^{s_2} =0 \, .
\end{gathered}
\]
The curves are the fibers by $\pi $ of the points in the $\tilde{\phi} $-orbit of $\pi(\alpha(p))\in \PP^1 $, and each of the curves contain infinitely many points among the $\tilde{\phi} $-orbit of $\alpha(p) $. Thus, Zariski closure of the $\tilde{\phi} $-orbit of $\alpha(p) $ is the union of above curves. 

Note that all the fibers of $\pi $ are irreducible since $s_1, s_2 $ are coprime, though a scheme-theoretic fiber may not be reduced. 

The fibers by $\pi $ of the points in the $\tilde{\phi} $-orbit of a point in $\PP^1 $ is obviously $\tilde{\phi} $-invariant. Conversely, let $\alpha(C) \subset \AAA^2$ be a closed subvariety which is $\tilde{\phi} $-invariant. If $(0,0)\neq \alpha(p)\in \alpha(C) $, then $\alpha(C) $ should contain the Zariski closure of the $\tilde{\phi} $-orbit of $\alpha(p) $. If $\alpha(C) $ contains a irreducible component which is not a fiber of $\pi $, then the image $\pi(\alpha(C))\subset \PP^1 $ is infinite, and $\pi(\alpha(C)) $ contains infinitely many fibers of $\pi $. Thus, $\alpha(C)=\AAA^2 $. 

Concluding, every $\tilde{\phi} $-invariant curve is a union of some fibers of $\pi$.  

\item By the assumption that $h_1=0 $, we immediately diagonalize as $\tilde{\phi}(\tilde{x}, \tilde{y})=(a \tilde{x}, b \tilde{y}) $. Since there exist no nonzero integers $r_1, r_2$ such that $a^{r_1}=b^{r_2} $, the Zariski closure of the subgroup $\langle \tilde{\phi} \rangle \subset \GL_2(\kk)$ is the torus $\overline{\langle \tilde{\phi} \rangle}= (\kk^\times)^2 $ (\Cref{lemma:transcendence} (i)). 

A closed subvariety is invariant by $\tilde{\phi}$ if and only if it is invariant by the Zariski closure $(\kk^{\times})^2 = \overline{\langle \tilde{\phi}\rangle} \subset \GL_2(\kk) $. The point $(0,0) $ is the unique closed orbit of $(\kk^{\times})^2 $. Then, the orbits $(\kk^{\times})^2.(1,0) $ and $(\kk^{\times})^2.(0,1) $ are the locally closed orbits whose  respective closures $\tilde{x}=0 $ and $\tilde{y} $ are 1-dimensional. Then, for any point $\alpha(p)=(\tilde{x}_0, \tilde{y}_0) $ such that $\tilde{x}_0\neq 0, \tilde{y}_0 \neq 0 $, the orbit is $(\kk^\times)^2.\alpha(p)= \AAA^2 - \{\tilde{x}\tilde{y}=0 \} $. Thus, we have conclusion of the statement.

\item The iterates of $\tilde{\phi} $ is $\tilde{\phi}^i(\tilde{x}, \tilde{y})= (a^i \tilde{x} +i a^{i-1} \tilde{h_1}(\tilde{y}), b^i \tilde{y})$. Since $h_1\neq 0$, there exists an integer $1\leq r \leq n$ such that $b^r=a$, and $\tilde{h}_1(\tilde{y})= \tilde{a}_r \tilde{y}^r $ in the notations of \Cref{lemma:diagonalization}. The Zariski closure of the subgroup $\langle \tilde{\phi} \rangle \subset \JJ_n  $ is 
\[
  \overline{\langle \tilde{\phi} \rangle}= \{\tilde{\phi}_{t_1,t_2}(\tilde{x},\tilde{y})=(t_1^r \tilde{x} + a^{-1} \tilde{a}_r t_2 \tilde{y}^r, t_1 \tilde{y})\, | \, t_1 \in \kk^{\times}, t_2 \in \kk \} \, . 
\]
by the \Cref{lemma:transcendence} (ii). Obviously, the $\tilde{y} $-coordinate line $\tilde{y}=0 $ is invariant by the closure $\overline{\langle \tilde{\phi} \rangle} $. 

Let $\alpha(p)=(\tilde{x}_0, \tilde{y}_0) $ be a such that $\tilde{y}_0\neq 0 $. Applying $\tilde{\phi}_{1,t}$ on $\alpha(p) $ for varying $t\in \kk $, the Zariski closure of the $\tilde{\phi} $-orbit of $\alpha(p) $ contains the whole $\tilde{y} $-coordinate line through $\alpha(p)$. Then, applying $\tilde{\phi}_{t,0} $ on the $\tilde{y} $-coordinate line through $\alpha(p) $ for varying $t\in \kk^{\times} $, the Zariski closure of the $\tilde{\phi} $-orbit of $\alpha(p) $ should be whole $\AAA^2 $. 

\par Thus, if a closed $\tilde{\phi}$-invariant closed subvariety is not the whole $\AAA^2 $, then it is contained in the line $\tilde{y}=0 $.

\end{enumerate}
\end{proof}

\begin{corop}
\label{coro:torsion-elt-distinct-exponents}
Suppose $\phi=(ax + P(y), by+c)\in \JJ_n $ where $a$ and $b$ are both roots of unity. Moreover, assume that $b\neq 1 $, or that $(b,c)=(1,0) $. If $a \neq 1, b, \cdots, b^n $, then $\phi $ is of finite order.
\end{corop}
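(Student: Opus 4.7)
The plan is to recognize this corollary as a direct specialization of \Cref{prop:computations-unity}(i), via the diagonalization of \Cref{lemma:diagonalization}. The hypotheses ``$b \neq 1$, or $(b,c) = (1,0)$'' are exactly what is needed to apply that diagonalization lemma, which yields the change of coordinates $\alpha$ and the polynomial
\[
h_1(y) = \tilde{h}_1(\tilde{y}) = \sum_{\substack{0 \leq i \leq n \\ a = b^i}} \tilde{a}_i \tilde{y}^i \, .
\]

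The key observation is that the hypothesis $a \neq 1, b, b^2, \ldots, b^n$ says precisely that $a \neq b^i$ for every $i \in \{0, 1, \ldots, n\}$. This forces the indexing set in the sum above to be empty, and hence $h_1 = 0$ identically. Since $a$ and $b$ are by assumption both roots of unity and we have just verified $h_1 = 0$, the hypotheses of \Cref{prop:computations-unity}(i) are satisfied, and that proposition immediately gives that $\phi$ is of finite order.

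I expect no obstacle here — the corollary is essentially a bookkeeping consequence, reading off the vanishing of the resonance sum $h_1$ from a clean non-resonance condition among the eigenvalues $a$ and $1, b, \ldots, b^n$ of $\phi^*$ acting on $V_n$. The only minor point to verify is that the polynomial $P(y)$ has degree at most $n$, so that the indices in the sum defining $h_1$ really do range only over $0 \leq i \leq n$; this is built into the assumption $\phi \in \JJ_n$.
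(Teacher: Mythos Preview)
Your proposal is correct and follows essentially the same route as the paper: you invoke \Cref{lemma:diagonalization} (licensed by the hypothesis $b\neq 1$ or $(b,c)=(1,0)$), observe that the non-resonance condition $a\neq 1,b,\ldots,b^n$ forces the sum defining $h_1$ to be empty, and then apply \Cref{prop:computations-unity}(i). The paper's proof is the same two-line argument, just stated more tersely.
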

\begin{proof}
This is the case (i) of \Cref{prop:computations-unity}. By the assumption that $a\neq 1, b, \cdots, b^n $, we have $h_1=0 $ in the notation of \Cref{lemma:diagonalization}. 
\end{proof}

\begin{corop}
\label{coro:torsion-elt}
Every torsion element in $\Aut(\AAA^2) $ is conjugate to a diagonal map. 
\end{corop}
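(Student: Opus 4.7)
The plan is to first reduce to the two amalgamation factors, then analyze each factor using the machinery of the preceding section. Let $\phi \in \Aut(\AAA^2)$ be a torsion element. Since $\langle \phi \rangle$ is finite, $\phi$ is a bounded element, so by \Cref{prop:bdd-len-criterion} (equivalently, Serre's \Cref{serre-tree}), $\phi$ is conjugate in $\Aut(\AAA^2)$ into $\Aff$ or $\JJ$. Conjugacy preserves both ``being torsion'' and ``being diagonal (up to a further conjugation)'', so it suffices to prove the claim under the assumption that $\phi$ already lies in $\Aff$ or in $\JJ$.

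For $\phi=(ax+P(y), by+c)\in \JJ$ torsion, the iterate $\phi^n$ of the $y$-component is $b^n y + c(b^{n-1}+\cdots+1)$, so torsionness forces $b$ to be a root of unity, and moreover forces $c=0$ when $b=1$. (Equivalently, \Cref{prop:computations-b-is-one} (i)--(ii) show that the case $b=1,\,c\neq 0$ produces no torsion point whatsoever, hence no torsion $\phi$.) Likewise $a$ must be a root of unity. We are therefore in the hypothesis of \Cref{lemma:diagonalization}, which yields coordinates $(\tilde{x},\tilde{y})$ in which
\[
\tilde{\phi}^n(\tilde{x},\tilde{y}) \;=\; \bigl(a^n \tilde{x} + n a^{n-1}\tilde{h}_1(\tilde{y}),\; b^n \tilde{y}\bigr).
\]
For $\tilde{\phi}^n = \id$ to hold for some $n\geq 1$, we need $a^n=b^n=1$ and $n a^{n-1}\tilde{h}_1(\tilde{y})=0$; since $\characteristic\kk=0$, this forces $\tilde{h}_1=0$, so $\tilde{\phi}(\tilde{x},\tilde{y})=(a\tilde{x}, b\tilde{y})$ is diagonal, as desired.

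For $\phi\in \Aff$ torsion, write $\phi(x,y) = A\binom{x}{y} + v$ with $A\in \GL_2(\kk)$ and $v\in \kk^2$. If $n$ is the order of $\phi$, then the averaged point $q = \frac{1}{n}\sum_{i=0}^{n-1}\phi^i(0)$ satisfies $\phi(q)=q$, and conjugating by the translation that sends $q$ to the origin turns $\phi$ into its linear part $A$. Since $A^n=\id$ in $\GL_2(\kk)$ and $\kk$ is algebraically closed of characteristic $0$, the minimal polynomial of $A$ divides $t^n-1$, which is separable, so $A$ is diagonalizable; an additional linear conjugation puts $\phi$ in diagonal form.

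The only genuinely substantive step is the $\JJ$-case, where one must know precisely which pairs $(a,b,c,P)$ can give torsion elements and that the residual ``unipotent obstruction'' $\tilde{h}_1$ necessarily vanishes. Both facts are already packaged in \Cref{lemma:diagonalization} and \Cref{prop:computations-b-is-one}--\Cref{prop:computations-unity}; with them in hand the proof is just the case distinction above.
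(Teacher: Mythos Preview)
Your proof is correct and follows essentially the same route as the paper: reduce to the factors via boundedness, then invoke \Cref{lemma:diagonalization} and the observation that torsion forces $\tilde{h}_1=0$ (the paper phrases this as ``all other cases have an infinite orbit'', pointing to the case analysis in \Cref{prop:computations-unity}, while you compute it directly from the iteration formula). The only cosmetic difference is that you treat the $\Aff$ case separately with the standard averaging/fixed-point argument, whereas the paper simply absorbs $\Aff$ into $\JJ$ via the earlier remark that every element of $\Aff$ is conjugate into $\JJ_1=\JJ\cap\Aff$; your separate treatment is correct but not needed.
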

\begin{proof}
In the amalgamated product, the torsion element is necessarily bounded. If not, the length increases by each power and it can't be a torsion element. Thus, the torsion elements in the amalgamated product are precisely those which are conjugate to a torsion element in the factors. The case when the automorphism is of finite order is precisely the case (i) of the \Cref{prop:computations-unity} (all other cases have an infinite orbit). 
\end{proof}

We classify the algebraic elements of $\Aut(\AAA^2) $ by their invariant subvariety lattice. A polynomial $f\in \kk[x,y] $ is called an \textit{$\AAA^1$-bundle projection} if all of its fibers are isomorphic to $\AAA^1 $ when considered as a morphism $f:\AAA^2 \rightarrow \AAA^1 $. 

\begin{definition}
\label{def:invar-subvar-lattice-classification}
Let $\phi \in \Aut(\AAA^2) $ be an algebraic element, that is, an element which is conjugate into $\Aff $ or $\JJ $. Suppose $\phi $ is not of finite order. Then, we have trichotomy for $\phi $ according to its invariant subvariety lattice:
\begin{enumerate}[label=(\roman*)]
\item $\phi $ is of \textit{orbit closure fibration type} if $\Supp{\frakI_{\phi}}= \AAA^2 $, and there exists a $\phi $-equivariant $\AAA^1 $-bundle projection map $\pi \in \kk[x,y] $ such that $\frakI_{\phi} $ is generated by some unions of the fibers of $\pi $, and, if there exists, torsion points, and a curve which is isomorphic to $\AAA^1 $ and transversal to the fibers of $\pi $.

\item $\phi$ is of \textit{projective quotient type} if $\Supp{\frakI_{\phi}}=\AAA^2 $, and there exists a $\phi $-equivariant rational map $\pi\in \kk(x,y)$ such that all the fibers of $\pi $, except possibly finitely many fibers, are irreducible and reduced, and some unions of fibers of $\pi $ together with the fixed point generate $\frakI_{\phi} $. In such the case, $\pi $ has a unique base point at the intersection of all the $\phi $-invariant curves. 

\item $\phi $ is of \textit{nonfibration type} if $\Supp{\frakI_{\phi}}\neq \AAA^2 $. 
\end{enumerate}

Polynomials $f_1, f_2\in \kk[x,y] $ whose general fibers are irreducible and reduced (thought as morphisms $f_i:\AAA^2 \rightarrow \AAA^1 $) are said to \textit{induce equivalent fibrations} if $f_1-f_2 \in \kk $. It is equivalent to the condition that all the fibers of $f_1$ and $f_2$ coincide, i.e. $f_1^{-1}(f_1(p))=f_2^{-1}(f_2(p)) $ for all $p\in \AAA^2 $. The equivalence of the two conditions is proved by arguing as in \Cref{lemma:equivar-affine}. 

Likewise, suppose rational functions $g_1, g_2 \in \kk(x,y) $ are such that $\dom(g_1)=\dom(g_2) \neq \AAA^2$, and their general fibers are irreducible and reduced (thought as rational maps $g_i:\AAA^2\dashrightarrow \PP^1 $). Then, we say $g_1 $ and $g_2 $ \textit{induce equivalent fibrations} if $g_1$ and $g_2 $ differ by a fractional linear transform. It is equivalent to the condition that all the fibers of $g_1$ and $g_2 $ coincide, i.e. $g_1^{-1}(g_1(p))= g_2^{-1}(g_2(p)) $ for all $p\in \dom(g_i)$. The equivalence of the two conditions is proved by arguing as in \Cref{lemma:equivar-projective}.

We call the $\phi $-equivariant rational map $\pi $ as in (i) or (ii) the \textit{associated equivariant map of $\phi$}. 

Suppose automorphisms $\phi_1$ and $\phi_2 $ have the same type, and are not of nonfibration type. Let $\pi_1 $ and $\pi_2 $ be their associated equivariant maps. We say that $\phi_1 $ and $\phi_2 $ \textit{have equivalent fibrations} if their associated equivariant maps induce equivalent fibrations.
\end{definition}

\includegraphics[scale=0.75]{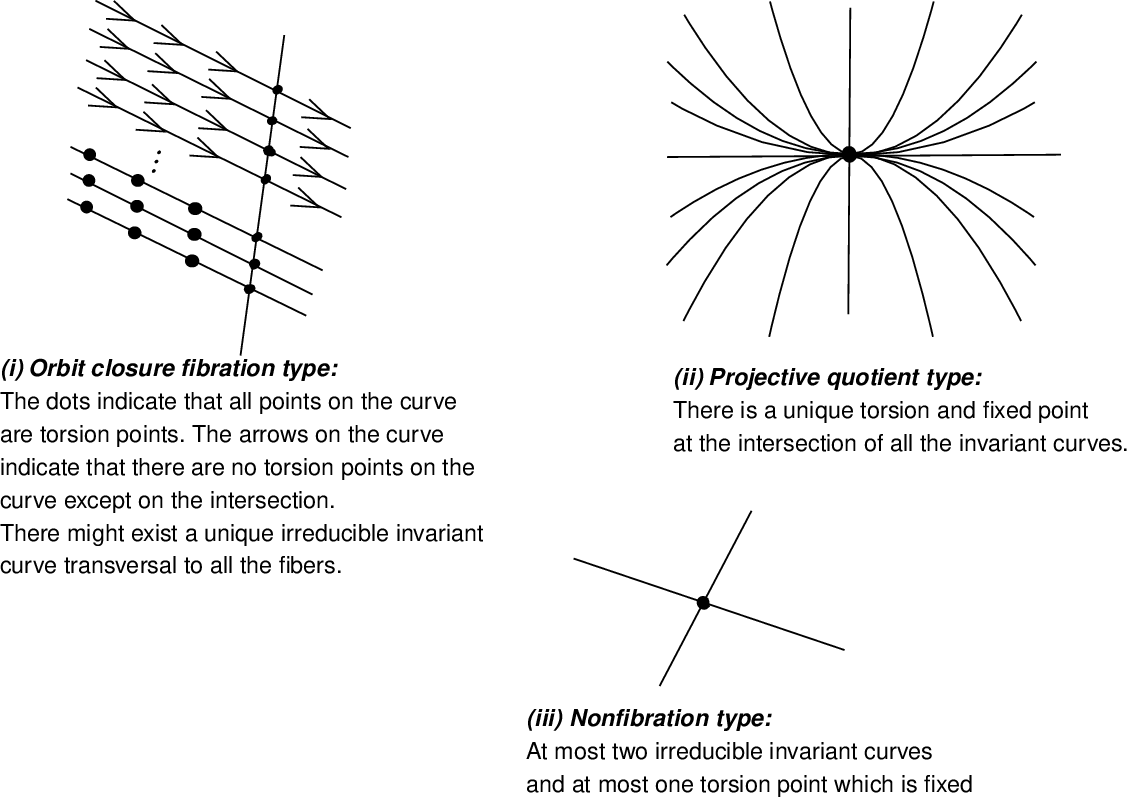}

\begin{rmk}
\label{rmk:fibration-types}
\begin{enumerate}[label=(\roman*)]
\item The orbit fibration types are (i), (ii) of \Cref{prop:computations-b-is-one}, and the instances of \Cref{prop:computations-unity}. The projective quotient type is (i) of the \Cref{prop:computations-no-unity}. The nonfibration types are (iii) of the \Cref{prop:computations-b-is-one}, and (ii), (iii) of the \Cref{prop:computations-no-unity}. The associated equivariant maps are given by taking appropriate coordinate map as in the computations propositions \Cref{prop:computations-b-is-one}, \Cref{prop:computations-unity} and \Cref{prop:computations-no-unity}.

\item Suppose $\phi_1 $ is of orbit closure fibration type while $\phi_2 $ is of nonfibration type. Then $\frakI_{\langle \phi_1, \phi_2 \rangle}= \frakI_{\phi_1}\cap \frakI_{\phi_2} $ is readily computed since $\frakI_{\phi_2} $ is finite. Simply check whether each element of $\frakI_{\phi_2} $ is $\phi_1 $-invariant. 

Now, suppose $\phi_2 $ is of projective quotient type. Then, $\frakI_{\langle \phi_1, \phi_2 \rangle}=\frakI_{\phi_1}\cap \frakI_{\phi_2} $ is again easily computed. All elements of the $\frakI_{\phi_2} $ contains the base point of the associated equivariant map. However, for a given point, there are most three elements through the point in $\frakI_{\phi_1} $. Examining those elements of $\frakI_{\phi_1} $ which pass through the unique base point of associate equivariant map of $\phi_2 $, we easily compute $\frakI_{\langle \phi_1, \phi_2 \rangle}=\frakI_{\phi_1}\cap \frakI_{\phi_2} $.

\item Suppose $\phi_1 $ and $\phi_2 $ have the same types and equivalent fibrations. Let $\pi_1 $ and $\pi_2 $ be the associated equivariant maps for $\phi_1 $ and $\phi_2 $ respectively. Then, $\pi_1 $ and $\pi_2 $ are $\langle \phi_1, \phi_2 \rangle $-equivariant by \Cref{lemma:equivar-affine} and \Cref{lemma:equivar-projective}.

\item Suppose $\phi$ is of finite order. Then, there exists a coordinate change as in \Cref{lemma:diagonalization} such that $a$ and $b$ are roots of unity. Suppose $a$ and $b$ are primitive $s_1 $-th and $s_2$-th roots of unity, respectively. Then, we have the GIT quotient map 
\[
  \AAA^2 \rightarrow \AAA^2 \sslash \langle \phi \rangle= \Spec{\kk[\tilde{x}^{s_1}, \tilde{y}^{s_2}]} \, .
\]
For a given point, there are infinitely many minimal $\phi $-invariant curves passing through the point. For this reason, the automorphisms of finite order will be dealt separately from the non-torsion elements. 
\end{enumerate}
\end{rmk}

\section{Computation of Orbit Zariski Closure}
\label{section:computation-orbit-closure}

In this section, we give an account of the development from Selberg's Lemma to Whang's universal orbit bound theorem. Then, I sum up everthing from previous sections to prove the decidability of algebraic orbit problem in $\AAA^2 $. The main ingredients for the proof of decidability are the Blanc-Stampfli Theorem (\Cref{prop:blanc-stampfli}), the conjugacy problem algorithm in $\AAA^2 $ (\Cref{theo:conj-criterion-aut}) and the uniform orbit bound theorem of Whang (\Cref{prop:whang}) \cite{Whang}.

\par Now, we gather everything from the previous sections together to compute the orbit closure of a finitely generated group action on $\AAA_{\overline{\QQ}}^2 $. We record the following easy lemma to use in \Cref{theo:orbit-zariski-closure}. 

\begin{lemmap}
\label{lemma:ozc_dimension_1}
Suppose $X\subset \AAA^2 $ is a variety which is of dimension 1 and $G\subset \Aut(X) $ a finitely generated group. For a given point $x\in X $, there exists an algorithm to compute the orbit Zariski closure of $x \in X$. 
\end{lemmap}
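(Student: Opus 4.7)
The plan is to use Whang's uniform orbit bound theorem (\Cref{prop:whang}) as a periodicity oracle, and to combine it with the fact that an irreducible curve has an especially simple invariant subvariety lattice: proper closed subvarieties of an irreducible $1$-dimensional variety are precisely the finite sets.

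First I would decompose $X$ into its irreducible components $X = X_1 \cup \cdots \cup X_n$, identify those of dimension $1$, and determine which contain $x$. Then, by applying \Cref{prop:whang} to the generators of $G$ together with their inverses, decide whether $G.x$ is finite. If $x$ is periodic, the explicit orbit bound allows us to enumerate $G.x$ by applying generators until no new points appear, and then $\overline{G.x}=G.x$.

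Otherwise $G.x$ is infinite and $\overline{G.x}$ has dimension $1$. In this case $x$ must lie on at least one $1$-dimensional irreducible component $X_i$ of $X$, for otherwise $G.x$ would be confined to the finitely many $0$-dimensional components of $X$. For each such $X_i$, let $H = \mathrm{Stab}_G(X_i)$; since $G$ acts on the finite set of irreducible components of $X$, $H$ has finite index in $G$. Writing $G = \bigsqcup_k h_k H$ for coset representatives $h_k$, we have $G.x = \bigcup_k h_k(H.x)$, so the infinitude of $G.x$ forces $H.x$ to be infinite. As $H.x \subset X_i$ and $X_i$ is irreducible of dimension $1$, the Zariski closure of $H.x$ in $X_i$ is all of $X_i$, hence $X_i \subset \overline{G.x}$, and by $G$-invariance the full $G$-orbit $G.X_i$ of $X_i$ among the components of $X$ lies in $\overline{G.x}$. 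Conversely, any $1$-dimensional component $C$ of $\overline{G.x}$ satisfies $C \cap G.x$ infinite, and by pigeonhole over the finitely many $1$-dim components of $X$ through $x$, some such component has $G$-orbit containing $C$. Since every point of $G.x$ lies on a $1$-dim component of $X$ (as $x$ does and $G$ permutes components), $\overline{G.x}$ has no isolated $0$-dimensional part in this case.

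The $G$-orbit of a component is computed by a straightforward breadth-first search: start with the finite set of $1$-dim components of $X$ containing $x$, apply each generator to each currently known component, and iterate; since $X$ has only finitely many irreducible components, this terminates. The only nontrivial external input is \Cref{prop:whang}; the conceptual crux is the finite-index stabilizer argument above, which is elementary and presents no substantial obstacle.
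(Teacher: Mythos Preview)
Your proof is correct and follows essentially the same approach as the paper: use \Cref{prop:whang} to test periodicity, and in the non-periodic case reduce to the permutation action of $G$ on the finitely many irreducible components of $X$. The paper's justification is slightly more direct---pigeonhole gives some component $X_1$ containing infinitely many orbit points, and then $\overline{G.x}=\overline{G.X_1}$---whereas you argue via the finite-index stabilizer of a component through $x$; but the resulting algorithm (compute the $G$-orbit of components and take the smallest $G$-invariant union containing $x$) is the same.
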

\begin{proof}
Using the algorithm of \Cref{prop:whang}, check whether $x$ is periodic with respect to $G$. If it is periodic, then we know that the $\overline{G.x} $ is 0-dimensional and the size of the orbit is smaller than the bound of \Cref{prop:whang}. Thus, we have computed orbit Zariski closure of $x$. 

\par Suppose $x$ is not periodic with respect to $G$. The orbit Zariski closure $\overline{G.x} \subset X$ should be of pure dimension 1 by the transitivity. There exists an irreducible component of $X$ which contains infinitely many points of $G.x $, say $X_1\subset X $. Then, $\overline{G.x}=\overline{G.X_1} $ which is of pure dimension 1.

\par Since there are finitely many subvarieties of pure dimension 1, by computing images of the irreducible components by the generators of $G$, it is possible to decide all $G$-invariant subvarieties of $X$. The smallest among those containing $x$ is the orbit Zariski closure in question. 
\end{proof}

\begin{theop}
\label{theo:orbit-zariski-closure}
Let $G$ be a finitely generated group $G$ acting algebraically on $\AAA_{\Qbar}^2 $. Then, given a point $p\in \AAA^2(\Qbar) $, there exists an algorithm to compute the orbit Zariski closure of $p$. 
\end{theop}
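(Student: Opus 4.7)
The plan is to reduce to cases that have already been handled in the paper. First, apply the conjugacy algorithm of \Cref{theo:conj-criterion-aut} to decide whether $G$ is of bounded degree. If $G$ is unbounded, \Cref{coro:unbdd-ozc} concludes: the Blanc-Stampfli Theorem forces $\overline{G.p}$ to have dimension $0$ or $2$, and the Whang uniform orbit bound \Cref{prop:whang} decides which. If $G$ is bounded, the conjugacy algorithm produces an explicit conjugator $\psi$ with $\psi G \psi^{-1}\subset \JJ$ or $\psi G\psi^{-1}\subset \Aff$; the remark after \Cref{Notation:dual-rep-of-jonquieres} shows every element of $\Aff$ can be further conjugated into $\Aff\cap \JJ$, so I may assume $\psi G \psi^{-1}\subset \JJ$. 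Since conjugation transports orbit closures equivariantly, it suffices to compute $\overline{\psi G\psi^{-1}.\psi(p)}$ and pull back.

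For $G=\langle \phi_1,\dots,\phi_n\rangle\subset \JJ$, compute each $\frakI_{\phi_i}$ explicitly via the trichotomy of \Cref{def:invar-subvar-lattice-classification}, applying whichever of \Cref{prop:computations-b-is-one}, \Cref{prop:computations-unity}, or \Cref{prop:computations-no-unity} corresponds to the eigenvalues of $\phi_i$. Each lattice is presented compactly by its associated equivariant map $\pi_i$ when the type is fibrational, together with its finite list of torsion loci. The orbit Zariski closure is the minimal element of $\frakI_{G,p}=\bigcap_{i=1}^n \frakI_{\phi_i,p}$, so the task reduces to an effective description of this intersection.

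The combination step proceeds by the case analysis sketched in \Cref{rmk:fibration-types}. If some $\phi_i$ is of nonfibration type, then $\frakI_{\phi_i}$ is finite and one directly tests each of its finitely many elements for invariance under the remaining $\phi_j$. If some $\phi_i$ is of projective quotient type, every element of $\frakI_{\phi_i}$ passes through a single base point, so at most three elements of $\frakI_{\phi_j}$ can meet a given element of $\frakI_{\phi_i}$ and candidates are again tightly constrained. If all $\phi_i$ are of orbit-closure fibration type with equivalent fibrations, the common associated map $\pi$ is $G$-equivariant by \Cref{lemma:equivar-affine} or \Cref{lemma:equivar-projective}, so $\overline{G.p}$ is recovered from the $G$-orbit of $\pi(p)$ on the target line and a transversal curve. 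When two fibrational generators have non-equivalent fibrations, any common positive-dimensional invariant subvariety must be a union of fibers of both, and non-equivalent fibrations of $\AAA^2$ share only finitely many common fibers, so the candidate set is effectively enumerable. Torsion generators are handled separately through the GIT quotient of \Cref{rmk:fibration-types}(iv), which replaces $\phi$ of finite order with the quotient map $\AAA^2\to \AAA^2\sslash\langle \phi\rangle$ and reduces the problem to the remaining generators acting on the quotient, which is still an affine surface to which the same machinery applies.

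The main obstacle is the bookkeeping needed to merge fibrations of different shapes and to descend in dimension in a controlled manner. At every step, if the candidate closure reduces to a variety of pure dimension $1$, then \Cref{lemma:ozc_dimension_1} finishes the computation; if it reduces to a $0$-dimensional set, \Cref{prop:whang} finishes; and if nothing forces a proper inclusion, $\overline{G.p}=\AAA^2$. Assembling these reductions in the order — torsion first, then nonfibration, then projective quotient, then reconciling the remaining orbit-closure fibrational generators — yields the desired algorithm.
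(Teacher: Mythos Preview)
Your overall architecture matches the paper's: bounded/unbounded dichotomy via \Cref{theo:conj-criterion-aut}, then for bounded $G$ a case analysis on the types of the generators' invariant subvariety lattices, terminating via \Cref{lemma:ozc_dimension_1} or \Cref{prop:whang}. Two points, however, do not go through as written.

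First, the claim that one may always assume $\psi G\psi^{-1}\subset \JJ$ is false. The remark after \Cref{Notation:dual-rep-of-jonquieres} conjugates a \emph{single} element of $\Aff$ into $\Aff\cap\JJ$; it does not simultaneously triangularize a subgroup. A subgroup of $\Aff$ such as a rotation group is not conjugate into $\JJ$. This is not fatal---each generator's lattice $\frakI_{\phi_i}$ can still be computed by conjugating that generator alone and transporting back---but your reduction as stated is incorrect.

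Second, and more seriously, your treatment of torsion generators does not work. Passing to the GIT quotient $\AAA^2\to \AAA^2\sslash\langle\phi\rangle$ does not ``reduce the problem to the remaining generators acting on the quotient'': the other $\phi_j$ need not normalize $\langle\phi\rangle$, so they do not descend to automorphisms of the quotient. Even when they do, $\AAA^2\sslash\langle\phi\rangle$ is generally a singular affine surface, not $\AAA^2$, and none of the machinery (Jung--van der Kulk, \Cref{theo:conj-criterion-aut}, the classification of $\frakI_\phi$) is available there. Relatedly, if \emph{every} generator is torsion your recipe removes them all and computes nothing, yet $G$ can still be infinite with a $1$-dimensional orbit closure. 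The paper avoids this entirely: in its Step~3 it invokes Schur's theorem on linear torsion groups to produce an infinite-order word in the generators, adjoins it, and thereafter keeps the torsion generators $g_{r+1},\dots,g_n$ in play. The torsion generators are then tested for $\pi$-equivariance in Steps~6--1 and~7--1; when some $g_j$ fails equivariance, a separate finite analysis (using \Cref{lemma:equivar-affine} or \Cref{lemma:equivar-projective}) bounds the possible invariant curves. Your proposal is missing this entire mechanism.
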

\begin{proof}
Identify $G $ as finitely generated subgroup of $\Aut(\AAA^2) $. We demonstrate the algorithm to compute the orbit Zariski closure $\overline{G.p} $, and argue the validity of the algorithm together. Through the algorithm, for $g \in \Aut(\AAA^2)$ which is in $\Aff \cup \JJ $, its type, the associated equivariant map (\Cref{def:invar-subvar-lattice-classification}), or the invariant subvariety lattice (\Cref{def:invar-subvar-lattice}) are calculated using the computations of \Cref{prop:computations-b-is-one}, \Cref{prop:computations-unity} and \Cref{prop:computations-no-unity}.  
\begin{enumerate}[label=\textbf{Step \arabic*.}]
  \item Use \Cref{prop:whang} to decide whether $\overline{G.p} $ is 0-dimensional or not. If it is not 0-dimensional, go to \textbf{Step 2}. If $\overline{G.p} $ is 0-dimensional, we can compute the Zariski closure of $G.p $ after finitely many computations and terminate. The cardinality of the orbit is less than the universal bound of \Cref{prop:whang}.
  
  \item From this step on, $\overline{G.p} $ is not 0-dimensional. Use \Cref{theo:conj-criterion-aut} to check whether $G $ is bounded or not. If $G$ is bounded, go to \textbf{Step 3}. If $G$ is unbounded, then it doesn't preserve any curves by \Cref{coro:blanc-stampfli}. Thus, $\overline{G.p} $ is not 1-dimensional. Conclude that $\overline{G.p}= \AAA^2 $ and terminate.  

  \item From this step on, $\overline{G.p} $ is not 0-dimensional and $G$ is bounded. Hence, $\overline{G.p}$ is of pure dimension 1 by transitivity, or $\overline{G.p}=\AAA^2 $.  
  
  \par Use \Cref{theo:conj-criterion-aut} to conjugate $G$ into either $\Aff $ or $\JJ $. Replace and denote this conjugation by the same $G=\langle g_1,\cdots, g_n \rangle $. Replace and denote by same $p$ the image of $p$ in the new conjugated coordinate system. Each $g_i $ is an algebraic element and we can decide whether it is torsion or not (see the proof of \Cref{coro:torsion-elt}). 
  
  \par Suppose all $g_1,\cdots, g_n $ are torsion elements. Since it was assumed that $G.p $ is infinite, $G$ is an infinite linear group. The Schur's Theorem on the Burnside problem for linear groups (\cite{Schur}) tells that an infinite linear group should contain an element of infinite order. Thus, by enumerating the words in $g_1,\cdots, g_n $, we produce an element of infinite order. After adding the element of infinite order in the generator set of $G$, we may assume that at least one of $g_i $'s is of infinite order. 

  \par Reindex the generators so that $g_1,\cdots, g_r $ are non-torsion, and $g_{r+1},\cdots, g_{n} $ are torsion elements.

  \item Check whether there exists a $g_i $ ($i\leq r $) which is of nonfibration type. If there is no such $g_i $, go to \textbf{Step 5}. If there is such $g_i $, then $\Supp{\frakI_G} \subset \Supp{\frakI_{g_i}}\neq \AAA^2 $. Compute the subvariety $\Supp{\frakI_{g_i}} $, set $X=\Supp{\frakI_{g_i}} $, and then go to \textbf{Step 8}.

  \item Check whether some $g_i$ and $g_j$ have distinct types or inequivalent fibrations for some $i,j \leq r $ (\Cref{def:invar-subvar-lattice-classification}). If all of them have the same types and equivalent fibrations, go to \textbf{Step 6}. Else, $\Supp{\frakI_{G}}\subset \Supp{(\frakI_{g_i} \cap \frakI_{g_j})} \neq \AAA^2 $, and we can compute $\Supp{(\frakI_{g_i} \cap \frakI_{g_j})} $ (\Cref{rmk:fibration-types} (ii)). Proceed to \textbf{Step 8} with $X=\Supp{\frakI_{g_i}\cap \frakI_{g_j}} $. 

  \item From this step on, assume that all $g_1, \cdots, g_r $ have the same types and fibrations. By the \Cref{rmk:fibration-types} (iii), if $\pi$ is an associated equivariant map for one of $g_1,\cdots, g_r $, then $\pi$ is $\langle g_1,\cdots, g_r\rangle $-equivariant. 
  
  If $g_1,\cdots, g_r $ are of projective quotient type, go to \textbf{Step 7}. In this step, we deal with the case when $g_1,\cdots, g_r $ are of orbit closure fibration type with equivalent fibrations. 
  
  For each $i=1,\cdots, r$, let $L_i$ denote (if it exists) the unique $g_i $-curve transversal to all the fibers of $\pi $ (see the diagram below \Cref{def:invar-subvar-lattice-classification}). If there is no such curve corresponding to $g_i $, then set $L_i=\emptyset $. If $L_1=\cdots = L_r $, then let $L=L_1$. Else, let $L=\emptyset $. 
  
  \begin{enumerate}[label=\textbf{Step 6-\arabic*.}]
    \item Check whether $\pi $ is equivariant by each $g_{r+1},\cdots, g_n $ (\Cref{lemma:equivar-affine}). If $\pi $ is equivariant by all of $g_{r+1},\cdots, g_n $, then go to \textbf{Step 6-2}.    
    
    Suppose otherwise that there exists $j>r $ such that $g_j^* \pi $ is not linear in $\pi $. We claim that $\overline{G.p}\subset L_1\cup g_j.L_1 $ or $\overline{G.p}= \AAA^2 $. No union of fibers of $\pi $ is invariant by $g_j $ since the image $\pi(g_j.\pi^{-1}(t))\subset \AAA^1 $ is infinite for any fiber $\pi^{-1}(t) $ (\Cref{lemma:equivar-affine}). Since it was assumed from \textbf{Step 3} that $\dim{\overline{G.p}}\neq 0 $, following the description of $\frakI_{g_1}$ in \Cref{def:invar-subvar-lattice-classification}, every invariant subvariety of $g_1 $ is some union of fibers of $\pi $ or $L_1 $, and $\overline{G.p}\subset L_1\cup \bigcup_{i=1}^k \pi^{-1}(t_i) $ or $\overline{G.p}=\AAA^2 $ where $\pi^{-1}(t_i) $'s are some finitely many fibers of $\pi $. In fact, here we can take $k=1 $, as $L_1\cup \pi^{-1}(t_1)\cup \pi^{-1}(t_2) $ cannot be $g_j $-invariant for $t_1\neq t_2 $. If it were, then $g_j.\pi^{-1}(t_1)=g_j.\pi^{-1}(t_2)=L_1 $, which follows from and contradicts \Cref{lemma:equivar-affine}. Thus, we have demonstrated $\overline{G.p}\subset L_1\cup g_j.L_1 $ or $\overline{G.p}=\AAA^2 $.
    
    Find the smallest pure dimension 1 subvariety of $L_1\cup g_j.L_1 $ which is $G$-invariant and contains $p$. If such the subvariety exists, then conclude that $\overline{G.p}$ is the subvariety and terminate. If no such the subvariety exists, then conclude $\overline{G.p}=\AAA^2 $ and terminte.

    \item Since $\pi$ is $\langle g_{r+1},\cdots, g_n\rangle $-equivariant, $\pi$ is $G$-equivariant. By \Cref{lemma:equivar-affine}, the action of $G$ on $\AAA^2 $ descends down to the algebraic action on $\AAA^1\cong \Spec{\kk[\pi]} $. Use \Cref{prop:whang} to decide whether $\pi(p)\in \AAA^1 $ is periodic or not. If $\pi(p)$ is periodic, then $G.\pi(p) $ is effectively computed and $\pi^{-1}(G.\pi(p)) $ is $G$-invariant. Use \Cref{lemma:ozc_dimension_1} on $\pi^{-1}(G.\pi(p)) $ to conclude and terminate. 
    
    If $\pi(p) $ is not periodic, then we claim that $\overline{G.p}=L $ or $\overline{G.p}=\AAA^2 $. If $G.p$ contains a point $q$ outside $L$, then $\overline{G.p} $ contains the whole fiber $\pi^{-1}(\pi(q)) $ following the description of $\frakI_{g_1} $ from \Cref{def:invar-subvar-lattice-classification} and the assumption that $\dim{\overline{G.p}}\neq 0 $ from \textbf{Step 3}. Then, $G.\pi^{-1}(\pi(q)) $ is union of infinitely many fibers of $\pi $. Since $G.\pi^{-1}(\pi(q)) \subset \overline{G.p} $, the closure is $\overline{G.p}=\AAA^2 $. 

    \par Following the argument, conclude that $\overline{G.p}=L $ or $\AAA^2 $ after examining whether $L$ is $G$-invariant and $p\in L $, then terminate.  
  \end{enumerate}

  \item In this step, we deal with the case when $g_1, \cdots, g_r$ are of projective quotient type with equivalent fibrations. The fiber $\pi^{-1}(t) $ denotes the strict transform.
  
  Check whether the unique base point of $\pi $ is fixed by each $g_{r+1},\cdots, g_{n} $. If the unique base point of $\pi $ is fixed by all $g_{r+1},\cdots, g_n $, go to \textbf{Step 7-1}. 
  
  If there exists some $g_j $ ($j>r $) such that $g_j $ doesn't fix the unique base point of $\pi $, denote by $q_1,\cdots, q_l $ the points in the $g_j $-orbit of the base point with $q_1$ the base point of $\pi$.

  Suppose a fiber $\pi^{-1}(t)$ contains none of $q_2,\cdots, q_l $. Then, $g_j.\pi^{-1}(t) $ doesn't contain $q_1 $ which is the base point of $\pi $. Hence, $g_j.\pi^{-1}(t) $ is not a fiber of $\pi $, and the image $\pi(g_j.\pi^{-1}(t))\subset \PP^1 $ is infinite. By the description of $\frakI_{g_1} $ from \Cref{def:invar-subvar-lattice-classification}, every invariant subvariety of $g_1 $ is some union of fibers of $\pi $, and thus, there exists no nontrivial $G$-invariant subvariety containing $\pi^{-1}(t) $. We have proved that 
  \[
  \Supp{\frakI_{G}} \subset \Supp{(\frakI_{\langle g_1,\cdots, g_r \rangle} \cap \frakI_{g_j})} \subset \bigcup_{i=1}^l \pi^{-1}(\pi(q_i)) \, . 
  \]
  Compute $X=\bigcup_{i=1}^l \pi^{-1}(\pi(q_i)) $ and go to \textbf{Step 8}. 

  \begin{enumerate}[label=\textbf{Step 7-\arabic*.}]
    \item Check whether $\pi $ is equivariant by each $g_{r+1},\cdots, g_n $ (\Cref{lemma:equivar-projective}). If $\pi $ is equivariant by all of $g_{r+1},\cdots, g_n $, then go to \textbf{Step 7-2}.
    
    Otherwise, let $J=\{j>r \, \vert \, \text{$\pi $ is not $g_j $-equivariant} \}$. For each $j\in J$, there exists at most a pair $t_{j1}, t_{j2}\in \PP^1 $ such that the fibers $\pi^{-1}(t_{j1}) $, $\pi^{-1}(t_{j2}) $ are irreducible and reduced, and $g_j.\pi^{-1}(t_{j1})= \pi^{-1}(t_{j2}) $ (\Cref{lemma:equivar-projective}). Denote by $Z\subset \AAA^2$ the underlying reduced closed subscheme of the non-reduced fibers of $\pi $. From the description of the $\frakI_{g_1} $ provided by \Cref{def:invar-subvar-lattice-classification}, only union of fibers of $\pi $ can be nontrivial $G $-invariant subvariety. Hence, 
    \[
    \Supp{\frakI_{G}}\subset \bigcup_{j\in J} (\pi^{-1}(t_{j1})\cup \pi^{-1}(t_{j2})) \cup Z \, . 
    \]
    Compute $X=\bigcup_{j\in J} (\pi^{-1}(t_{j1})\cup \pi^{-1}(t_{j2})) \cup Z  $, and go to \textbf{Step 8}.

    \item Here, we assume that $\pi $ is $G$-equivariant. Hence, by \Cref{lemma:equivar-projective}, the action of $G$ on $\AAA^2 $ descends down to action on $\PP^1 $. Check whether $\pi(p) $ is $G$-periodic or not by \Cref{prop:whang}. If it is not periodic, then $\overline{G.p} $ contains infinitely many fibers of $\pi$. Conclude $\overline{G.p}=\AAA^2 $ and terminate. 
    
    If $\pi(p) $ is periodic, then $\overline{G.p}\subset \pi^{-1}(G.\pi(p)) $, and $\pi^{-1}(G.\pi(p))$ is $G$-invariant. Compute $\overline{G.p} $ using \Cref{lemma:ozc_dimension_1}, then terminate. 
  \end{enumerate}

  \item We have computed a subvariety $X\subsetneq \AAA^2 $ such that $\Supp{\frakI_{G}}\subset X $ from some previous step. Find the smallest pure dimension 1 subvariety of $X $ which is preserved by all $g_1,\cdots, g_n $. Since there are finitely many 1-dimensional subvarieties of $X $, this can be computed in finitely many steps.
  
  If there is such the 1-dimensional subvariety of $X$ which contains $x$, then we conclude that $\overline{G.p} $ is the subvariety and terminate. If there is no such the 1-dimensional subvariety of $X$, then we conclude $\overline{G.p}=\AAA^2 $ and terminate.
\end{enumerate}
\end{proof}

\printbibliography

\end{document}